\renewcommand{\Pr}[1]{\mathbb{P}\left[\,#1\,\right]}
\newcommand\E[1]{\mathbb{E}\left[\,#1\,\right]}
\def\tmix{t_{\mbox{\scriptsize{\textsf{mix}}}}}
\def\thit{t_{\mbox{\scriptsize{\textsf{hit}}}}}
\newcommand{\remove}[1]{}
\newcommand{\R}{\mathbb{R}}
\newcommand{\dist}{\operatorname{dist}}
\newcommand{\poly}{\operatorname{poly}}
\newcommand{\eps}{\epsilon}
\newcommand{\argmax}{\operatorname{argmax}}
\renewcommand{\leq}{\leqslant}
\renewcommand{\geq}{\geqslant}
\renewcommand{\le}{\leqslant}
\renewcommand{\ge}{\geqslant}
\newcommand{\Hmax}[2]{\mathsf{H}_{\max}}
\newcommand{\thmref}[1]{Theorem~\ref{thm:#1}}
\newcommand{\lemref}[1]{Lemma~\ref{lem:#1}}
\newcommand{\secref}[1]{Section~\ref{sec:#1}}
\newcommand{\eq}[1]{equation~\eqref{eq:#1}}
\DeclareMathOperator{\Ev}{\mathbb{E}}
\DeclareMathOperator{\var}{Var}
\renewcommand{\epsilon}{\varepsilon}
\def\argmax{\operatornamewithlimits{argmax}}
\def\mod{\operatorname{mod}}
\newcommand{\calL}{{\mathcal{L}}}
\newcommand{\calG}{{\mathcal{G}}}
\newcommand{\calE}{{\mathcal{E}}}
\newtheorem{theorem}{Theorem}[section]  %[chapter]
\newtheorem{lemma}[theorem]{Lemma}
\newtheorem{maintheorem}{Main Result}
\newtheorem{corollary}[theorem]{Corollary}
\newtheorem{remark}[theorem]{Remark}
\newtheorem{proposition}[theorem]{Proposition}
\numberwithin{theorem}{section}
\numberwithin{lemma}{section}
\numberwithin{corollary}{section}
\numberwithin{proposition}{section}
\numberwithin{remark}{section}
\DeclarePairedDelimiter\floor{\lfloor}{\rfloor}
\title{Random Walks on Dynamic Graphs: Mixing Times, Hitting Times, and Return Probabilities}
\author{Thomas Sauerwald\thanks{Department of Computer Science and Technology, University of Cambridge, UK} \and Luca Zanetti\footnotemark[1]}
\date{}
\begin{document}

\maketitle

%TODO mandatory: add short abstract of the document
\begin{abstract}
We establish and generalise several bounds for various random walk quantities including the mixing time and the maximum hitting time. Unlike previous analyses, our derivations are based on rather intuitive notions of local expansion properties which allows us to capture the progress the random walk makes through $t$-step probabilities.

We apply our framework to dynamically changing graphs, where the set of vertices is fixed while the set of edges changes in each round. For random walks on dynamic connected graphs for which the stationary distribution does not change over time, we show that their behaviour  is in a certain sense similar to static graphs.
For example, we show that the mixing and hitting times of any sequence of $d$-regular connected graphs is $O(n^2)$, generalising a well-known result for static graphs. We also provide refined bounds depending on the isoperimetric dimension of the graph, matching again known results for static graphs. Finally, we investigate properties of random walks on dynamic graphs that are not always connected: we relate their convergence to stationarity to the spectral properties of an average of transition matrices and provide some examples that demonstrate strong discrepancies between static and dynamic graphs.
\end{abstract}

%!TEX root = norm-7.tex
\section{Introduction}

\textbf{Problem and Motivation.} A random walk is a stochastic process on an undirected connected graph $G=(V,E)$. A particle starts on a specified vertex, and then at each time-step $t=1,2,\ldots$ it moves to a neighbouring vertex chosen uniformly at random. Random walks have proven to be extremely powerful in the design of various sampling schemes, exploration strategies, and distributed algorithms~\cite{LPW06}. They provide a simple yet robust way to explore a large network. Most of the studies on random walks, however, assume the underlying graph to be fixed. In contrast, many prevalent networks today (such as the Internet, social networks, and wireless communication networks) are subject to dramatic changes in their topology over time. Therefore, understanding the theoretical power and limitations of dynamic graphs has been identified as one of the key challenges in computer science~\cite{MS18}.

Recently, several works have considered this problem and investigated the behaviour of random walks \cite{AugustinePR16,avin,DenysyukR14,spirakis,merging1,merging2} or similar processes \cite{icalp2016voter,ClementiCDFPS16,ClementiST15,GiakkoupisSS14,KuhnO11} on such dynamic graphs.
Moreover, rather than a property of the underlying network itself, dynamic graphs may naturally arise in distributed algorithms when communication is performed on a changing, possibly disconnected, subgraph like a spanning-tree or a matching (see, e.g., \cite{gossip}).
%\NOTE{T: Dynamic graphs also arise from static graphs if communication is restricted, e.g., by matchings.}

One very popular model is that of an evolving graph, where we consider a sequence of graphs $G^{(1)},G^{(2)},\ldots$ over the same set of vertices but with a varying set of edges. This model has been the subject of the majority of previous studies of random walks on dynamic graphs and will be the object of our study as well. Another important feature of dynamic networks is that, with a changing set of edges, the resulting connectivity (i.e., expansion) changes. This might be very common in communication networks, where nodes change their location in space over time and can only communicate if they are within a certain distance of each other. For example,	\cite{KuhnO11} highlights the need to study such evolving graphs with relatively poor connectivity and \cite{MS18} emphasises the unpredictable nature of fast-changing dynamic networks. To incorporate these features into our model, we will consider evolving graphs with relatively mild assumptions on their connectivity and will not make any restriction on how fast they are changing. Our quantitative analysis is focused on the {\em mixing time}, the time to converge to the equilibrium distribution, and the {\em hitting time}, the 
expected number of steps required by a random walk that starts in a vertex $u$ to reach a vertex $v$. Analysing the mixing time of dynamic graphs is also useful for load balancing applications, where the mixing time represents the time it takes for all nodes to have (roughly) a load that is proportional to their stationary distribution. Most theoretical studies of load balancing so far assumed the graph to be fixed.

\textbf{Our Results.} The main motivation for our work comes from the results by Avin et al.~\cite{avin}, which describe a remarkable dichotomy with respect to the behaviour of random walks in evolving graphs: while sequences of connected graphs that share the same stationary distribution are guaranteed to have mixing and hitting times polynomial in the size of the graphs, even small incremental changes to the stationary distribution can cause hitting times to become exponential in the worst case. We focus on the first case of this dichotomy and prove that, at least regarding mixing and hitting times, there is essentially no difference in the behaviour of random walks on static and evolving graphs with a time-independent stationary distribution.

Recall that, for static graphs, it is well-known that the worst-case hitting time is $O(n^2)$ for regular graphs and $O(n^3)$ for arbitrary graphs \cite{Fe95a,Fe95b}. Quite surprisingly, we can show that something very similar holds in the setting of evolving graphs: our theorem below 
 proves an upper bound of $O(n^2)$ for the mixing and hitting times of regular evolving graphs, which is optimal even for static graphs, an upper bound of $O(n^3)$ for the mixing time of non-regular evolving graphs, which is again optimal even for static graphs, and an $O(n^3 \log{n})$ upper bound for the maximum hitting time, which is only a factor of $O(\log{n})$ short from the optimal bound on static graphs (simply consider the Barbell graph, i.e., two cliques of size $n/3$ connected by a path of length $n/3$, which has $O(n^3)$ mixing and maximum hitting time).

\begin{maintheorem}[{restated, see Theorem~\ref{thm:worsthit} on page~\pageref{thm:worsthit}}]\label{thm:firstmain}
Let $\mathcal{G}=\{G^{(t)}\}_{t=1}^{\infty}$ be a sequence of connected graphs with $n$ vertices, the same stationary distribution $\pi$, and at most $m^*$ edges. Then:
\begin{enumerate}\itemsep0pt
	\item $\tmix(\mathcal{G}) = O(n/\pi_*)$,
	\item $\bigl| \frac{p^{[0,t]}_{u,v}}{\pi_v} - 1 \bigr| \lesssim \frac{m^*}{t} + \frac{1}{\pi_* \sqrt{t}}$, simplifying to $\bigl| \frac{p^{[0,t]}_{u,v}}{\pi_v} - 1 \bigr| \lesssim \frac{n}{\sqrt{t}}$  if all graphs in $\mathcal{G}$ are $d$-regular,
	\item $\thit(\mathcal{G}) = O(n \log{n}/\pi_*)$. Furthermore, if the graphs in $\mathcal{G}$ are $d$-regular,  $\thit(\mathcal{G}) = O(n^2)$.
\end{enumerate} 
\end{maintheorem}
%\NOTE{T: Can we compare the results we get from this theorem to the results for the max-degree walk of Avin et al.?}
%\NOTE{T: Should we highlight the different between fixed degrees and fixed stationary distribution? Also we should make $d$-regular precise, i.e., the $d$ is the same for all graphs?}

\begin{remark}
In this work, we never explicitly derive upper bounds on the cover time (i.e., the expected time for a random walk to visit all vertices). However, analogous to Matthew's Bound for static graphs~\cite[Chapter~11.2]{LPW06}, all the {\em stated} upper bounds on hitting times can be converted into upper bounds on cover times at the cost of an additional $O(\log n)$-factor.
\end{remark}
\begin{remark}
Unlike static graphs where the gap between cover and hitting time is $O(\log n)$ (thanks to~Matthew's Bound), for evolving graphs the gap could be $\Omega(n)$ even if the sequence consists of regular connected graphs. For example, for any $t \leq c n \ln n$, let $G^{(t)}$ be a complete graph with $n$ vertices, while for any $t > c n \ln n$, let $G^{(t)}$ be a cycle with $n$ vertices. We can choose the constant $c$ so that, with probability $1-\Theta(n^{-1})$, every fixed vertex is visited before $c n \ln n$ steps, but with constant nonzero probability, there is at least one unvisited vertex which is at distance $\Omega(n)$ from the location of the walk at step $c n \ln n$. This yields a $\Theta(n)$ maximum hitting time, but a $\Theta(n^2)$ cover time.
\end{remark}

A natural question is of course under which conditions the worst-case bound on the hitting time can be improved. For static graphs, it has been observed that for many regular networks, the hitting time is indeed optimal, i.e., $O(n)$. One very general and unifying condition 
is the conjecture of Aldous and Fill~\cite[Open Problem 6.20]{aldousFill} stating that for any bounded-degree, $d$-regular graph, an isoperimetric dimension of $2+\epsilon$ is enough for hitting times to be linear (which is as good as possible). Since the isoperimetric dimension is equal to the dimension of grids, it follows that grids of dimension $3$ or higher have a linear hitting time, while grids of dimension $2$ have a hitting time of $O(n \log n)$. 

For static graphs, a positive answer to the above conjecture by Aldous and Fill was first given by \cite{benjamini}, and another proof was found by \cite{gharan}. Both these proofs, however, exploit the connection between hitting times and electrical resistances~\cite{chandra}, which is not known to exist for evolving graphs (indeed, it is not even clear how to formalise such connection). Since our techniques for bounding hitting times are more probabilistic in nature and avoid arguments based on electrical networks, we are able to show that the conjecture by Aldous and Fill is true even in a dynamic setting. 
%\NOTE{T: Does the next result work for all $\epsilon$, or does it need to be a constant?} \NOTE{T: Make clear graph is connected.}
\begin{maintheorem}[restated, see Theorem~\ref{thm:twopluseps} on page~\pageref{thm:twopluseps}]
\label{main:twopluseps}
Let $\mathcal{G}=\{G^{(t)}\}_{t=1}^{\infty}$ be a sequence of $n$-vertex graphs such that each $G^{(t)}$ is regular, has bounded degree, and satisfies the following isoperimetric condition: there exists $\eps \in [0,1/4]$ such that, for any subset of vertices $A$ with $1 \le |A| \le n/2$, $|E(A,V \setminus A)| = \Omega(|A|^{\frac{1}{2} + \eps})$. Then,
\begin{enumerate}\itemsep0pt
\item $\tmix(\mathcal{G}) = O(n^{1-2\eps})$,
\item $\bigl| \frac{p^{[0,t]}_{u,v}}{n} - 1 \bigr| = O\left(\frac{1}{t^{1+2\eps}}\right)$,
\item $\thit(\mathcal{G}) = O(n)$ if $\eps>0$, $\thit(\mathcal{G}) = O(n\log{n})$ if $\eps=0$.
\end{enumerate} 
% Let $\mathcal{G}=\{G^{(t)}\}_{t=1}^{\infty}$ be a sequence of $n$-vertex graphs such that each $G^{(i)}$ is regular, has bounded degree, and satisfies the following isoperimetric condition: there exists a constant $\eps> 0$ such that, for any $A \subseteq V$ with $1 \le |A| \le n/2$, 
% $|E(A,V \setminus A)| = \Omega(|A|^{\frac{1}{2} + \eps})$.
% Then, $\thit(\mathcal{G}) = O(n)$. Moreover, if \eq{iso} holds for $\eps = 0$, $\thit(\mathcal{G}) = O(n \log{n})$.
% %\NOTE{T: Should we drop the sentence with Moreover?} 
% \NOTE{T: Why aren't we stating any bound on the mixing time here?}
\end{maintheorem}
%\NOTE{T: Make sure $\eps$ is always a constant.}
%For the mixing-time and return probabilities, we also obtain a bound of ???.

Note that the isoperimetric condition essentially says that each graph in the sequence must be at least as well-connected as a $2+\epsilon$-dimensional grid. For $\epsilon=0$, we recover the $O(n \log n)$ hitting time for static two-dimensional grids. Both of these cases might be relevant in certain applications of moving wireless devices or robots performing terrain exploration.
%The setting of such a dynamic graph might be relevant in the context of a dynamic communication networks.

The first two results apply to settings where there is a ``stable connectivity'', but each graph in the sequence may have a relatively poor expansion. The next result applies to scenarios where connectivity is more intermittent, in fact some of the vertices may even be isolated at some time steps. However, ``averaging'' over a sufficiently long time window, the graph will not only  be connected but might also satisfy some reasonably strong expansion guarantee. In this sense, this model is somewhat related to that of \cite{KLO10}, which stipulates the existence of a spanning subgraph over any time-interval of a certain length. 
More formally, in the next theorem we assume that the random walk is on a sequence $\calG$ of graphs with transition matrices $P^{(1)},P^{(2)},...$ and there exists a time-independent distribution $\pi$ which is stationary for any $P^{(i)}$. We remark that we do not assume connectivity and, therefore, any individual $P^{(i)}$ might have multiple stationary distributions. We assume, however, that there exists a large enough time window $t$ such that, for any $i \ge 1$, $\overline{P}^{[i,i+t]} =\frac{1}{t}(P^{(i)} + P^{(i+1)} + \cdots + P^{(i+t)})$ is ergodic with a unique stationary distribution $\pi$ and spectral gap $\lambda(\overline{P}^{[i,i+t]}) \ge \lambda > 0$. We then can show that the distribution of  a lazy random walk on $\calG$ converges to $\pi$ at a rate that depends on $t$ and the spectral gap $\lambda$. We refer to \secref{average} for details on the set-up.
\begin{maintheorem}[restated, see Corollary~\ref{cor:average} on page~\pageref{cor:average}]
\label{main:average}
Consider a dynamically evolving sequence $\mathcal{G}=\{G^{(t)}\}_{t=1}^{\infty}$ of graphs with transition matrices $\{P^{(i)}\}_{i=1}^{\infty}$ such that (1) there exists $\pi$ which is a stationary distribution for any $P^{(i)}$; and (2) there exists a time-window $t \ge 0$ such that, for any $i \ge 0$, $\overline{P}^{[i,i+t]} =\frac{1}{t}(P^{(i)} + P^{(i+1)} + \cdots + P^{(i+t)})$ is ergodic and has spectral gap $\lambda\bigl(\overline{P}^{[i,i+t]}\bigr) \ge \lambda > 0$. Then,
$\tmix(\calG) = O\left(t^2\log(1/\pi_*)\lambda^{-1}\right)$, where $\pi_* = \min_u \pi(u)$.
\end{maintheorem}

This result is not only significant in the context of dynamically evolving graphs, but also in settings of static graphs where communication is restricted to a bounded-degree subgraph which potentially changes in each round. One prominent example are matching-based communications, where in each round a random matching is generated and only those edges can be used for averaging or exchanging information, e.g.,~\cite{gossip}.

%Main Theorem \ref{main:average} implies that, if $t$ is polynomial in the size of the graphs
Even when the assumptions of Main Theorem \ref{main:average} are satisfied for a \emph{small} time-window $t$, we cannot always guarantee that hitting and mixing times will be polynomial in the size of the graphs. Indeed, we exhibit examples of dynamic evolving graphs of $n$ vertices that satisfy such conditions but have mixing and/or hitting times that are exponential in $n$ and $t$. We show in Proposition \ref{pro:nohitting} that, since graphs in the sequences need not be connected, it is possible to construct examples where the stationary distribution $\pi$ has exponentially small probability mass on some vertices. This could result in exponential mixing and hitting times, but somewhat surprisingly also possibly in polynomial mixing time and exponential maximum hitting time. Both of the constructed graph sequences rely on the idea of simulating a directed graph by a sequence of disconnected bipartite graphs.

A natural question is whether we can relax the assumptions on the regularity or existence of a time-invariant stationary distribution. Unfortunately, we show that this is not always possible. We exhibit in Proposition \ref{pro:nomixing} a sequence of graphs which are connected, have bounded-degree and constant spectral gap, but for which $t$-step probabilities are very far from the uniform distribution even for a time $t$ which is larger than the mixing time of a random walk on any (static) graph in the sequence.

Going back to Main Result~\ref{main:twopluseps}, the essence behind the proof is that, to achieve an optimal $O(n)$ hitting time, we do not need large sets to have high expansion. What we only need is that small sets have a ``sufficiently high'' expansion. We derive another result in the same spirit by upper bounding a variational characterisation of the commute time in terms of some version of the conductance profile~\cite{conductanceprof}. However, since we need to exploit a variational characterisation of the commute time, as opposed to the earlier results, this bound only holds for static graphs. Let $C_{st}$ be the commute time from $s$ to $t$. Then, we have the following.

\begin{maintheorem}[restated, see \lemref{general} on page~\pageref{lem:general}]
\label{main:cutsets}
	For any static graph $G = (V,E)$ and $s,t \in V$, there exists a labelling of the vertices from $1$ to $n$ such that
	$
	C_{st} \le 2m \sum_{j=1}^{n-1}|\partial [j]|^{-1}$,
	where $\partial [j]$ is the set of edges with one endpoint in $\{1,\dots,i\}$ and one in $\{i+1,\dots,n\}$.
%	Furthermore, by considering the reversal of the labelling, we can also conclude that
%	$
%	C_{st} \le 4m \sum_{j=1}^{n/2} \frac{1}{|\partial [j]|}.
%	$
\end{maintheorem}

Note the relation between Main Theorem \ref{main:cutsets} and the well-known Nash-Williams' inequality \cite[Proposition 9.15]{levinPeres} which states that, for every set $\{E_1,E_2,\ldots,E_k\}$ of edge-disjoint cut-sets separating $s$ from $t$, 
$C_{st} \geq 2m \sum_{j=1}^{k} |E_j|^{-1}$.
Our upper bound, however, differs from the Nash-Williams' inequality in two ways: (1) the cut-sets $\partial[j]$ are in general not edge-disjoint; (2) we prove the existence of a ``good'' labelling, while Nash-Williams holds for \emph{any} labelling. %\NOTE{T: Isn't the fact that we cannot choose the labelling arbitrarily another difference?}

% weaker expansion requirements for sets of small size. We demonstrate the usefulness of this concept further to derive a new bound on the commute time that depends on the edge-connectivity. 
 
 %While this result only holds for static graphs, it improves over the best known bound which is due to Aldous \& Fill.
As an application of this result, we consider the hitting time on $d$-regular graphs in terms of the edge-connectivity, which does not impose any condition on the expansion of large sets.
 
 \begin{maintheorem}[restated, see Theorem~\ref{thm:connectivity} on page~\pageref{thm:connectivity}]
Let $G=(V,E)$ be any static $d$-regular graph with edge-connectivity $\rho$. Then $\thit(\mathcal{G}) \leq O(n^2 \cdot ( \frac{\log d}{d} + \frac{1}{\rho} ) )$. In particular, since $\rho \leq d$, we get the simpler (but potentially slightly weaker) upper bound $\thit(\mathcal{G}) = O(n^2 \log d/\rho)$.
\end{maintheorem}

We remark that in Aldous and Fill~\cite[Proposition 6.22]{aldousFill}, it was shown that for any $d$-regular graph $G$ which is $\rho$-edge-connected, the maximum hitting time is $O(n^2 d \cdot \rho^{-3/2})$. They also mention that if the graph is $\Omega(d)$-edge-connected, they obtain a bound of $O(n^2 \cdot d^{-1/2})$. For this case of maximal edge-connectivity, $\rho=\Theta(d)$, 
our bound is considerably better than the one by Aldous and Fill, and, modulo the $\log d$-factor, gives also the right dependency on $d$. In particular,
we demonstrate in \secref{cutsets} that the dependency on the edge-connectivity $\rho$ is as good as possible (neglecting logarithmic factors) in the sense that for any pair of $\rho$ and $d$, there exists a $d$-regular graph with edge-connectivity $\rho$ which matches the upper bound in Main Result 5 (\thmref{connectivity}) up to constant factors.

\textbf{Further Related Work.} While in this work we focus on standard (lazy) random walks on graphs, we should point out that previous work has established an alternative in form of the so-called {\em max-degree walk} \cite{avin}. In this random walk variant, a large loop probability depending on the degree of the current vertex and (an estimate of) the maximum degree $\Delta$ is added. With this modification, the stationary distribution of each graph is identical (and uniform), which makes the analysis of this walk easier. However, one downside of this approach is that it either requires a good estimate of $\Delta$ (or even $n$), or the random walk may potentially be slowed down significantly. %\NOTE{T: Should we mention the Random Walk with Delay protocol by Spirakis?}. 
Also, studying standard random walks seems more natural and, as we will see later, it also helps us to uncover some of the subtle boundaries between fast mixing and polynomial hitting, and slow mixing and exponential hitting.

%
%In their survey about dynamic networks, Kuhn and Oshman~\cite{KuhnO11} point out that there is a further need to consider dynamic graphs where the expansion may change significantly over time. Specifically, they write that:
%\begin{quote}
%``Another interesting direction is testing and adapting to network conditions. Throughout this column we saw that if the network enjoys good expansion or long-term stability, algorithms can be made to run faster. But how do we know if the network is a good expander, or even if it is 1-interval connected?''
%%%How do we know if it is fairly stable over time?'' %Lightweight tests for such properties, which can run alongside an algorithm and inform it of the condition of the network in the recent past, could allow algorithms to take advantage of a well-behaved network.''
%\end{quote}

%\subsection{Dynamic Graphs}
%
%Understanding the power and limitations of dynamic graphs has been identified as one of the key challenges in computer science~\cite{CACM18}.
%
%Random walks (or similar processes like PageRank) are also the main component of many distributed algorithms~\cite{AugustineMMPRU13,AugustinePR16,ZhangLG16}, and therefore it is important to understand fundamental quantities like mixing time or hitting time.

One of the earliest appearances of dynamic graphs is in the context of load balancing~\cite{GhoshLMMPRRTZ99}, where the authors assumed a uniform (i.e., time-independent) lower bound on the edge and vertex expansion. A refinement is to instead relate the balancing (mixing) time to the geometric mean of the spectral gaps, which was used in~\cite{ElsasserMS04}. A result of a similar flavour for both the conductance and the vertex expansion was shown in \cite{GiakkoupisSS14} in the context of randomised rumour spreading, and more recently a similar result was shown for the cover model \cite{icalp2016voter}. In \cite{KLO10}, the authors analyse a sequence of graphs satisfying a $T$-interval connectivity property, which asserts that for every $T$ consecutive rounds there exists a stable connected spanning subgraph. The authors present upper bounds for several distributed computational problems.

One specific graph model that has been very popular is the so-called Markovian evolving graph. In this model every edge is associated to the same but independent two-state birth and death chain which decides whether the edge is present or not in the next step. Many aspects of this network have been studied, most notably the (dynamic) diameter \cite{ClementiMMPS10} and the time to spread a piece of information \cite{ClementiST15}. Recently, however, Lamprou et al.~\cite{spirakis} also considered the cover time of these graphs. In particular, suppose there exists an underlying graph $G$ with minimum degree $\delta$ such that at each time $t$ the graph $G^{(t)}$ contains each edge in $G$ independently with probability $p$ (i.e., the presence of an edge does not depend on the past). They show the cover time of such dynamic graph is at most $t_{\sf cov}(G) / (1-(1-p)^{\delta})$, where $t_{\sf cov}(G)$ is the cover time of $G$. They also study \emph{random walks with a delay}, where at each step a particle chooses a random neighbour of the current vertex according to the topology of the underlying graph $G$, and moves there if the corresponding edge is present, otherwise waits till it becomes available. For this perhaps slightly less natural process, they give bounds on the cover time also for the case where the probability of an edge being available at time $t$ depends on whether that edge was available at time $t-1$. We also highlight dynamical percolation, a particular type of Markovian evolving graphs that has received recent attention (see, e.g., \cite{hermonSousi,pss15,sousiThomas}). Here, an ``open'' edge becomes ``close'' with probability $p$, while a close edge becomes open with probability $1-p$. In contrast to the literature above, however, works on random walks on dynamical percolation usually refer to continuous-time random walks.

%Given the popularity of the stochastic block model, it is also natural to investigate dynamic versions of it. One example is the work of \cite{ClementiIGNS15}, where the authors proposed a new algorithm based on the popular \emph{Label Propagation Algorithm} in order to find the right ``planted'' partition quickly. \NOTE{T: Mention this paragraph only if we have some results for dynamic stochastic block model.}

Another class of dynamic graph models involves agents that move in some bounded space and can interact only if they are close enough~\cite{LamLMSW12,PeresSSS11,PettarinPPU11}. In contrast to these works, our bounds are less tight but hold under much weaker assumptions on the graph and therefore capture a more dynamic and less ``regular'' setting.

%\NOTE{T: this paragraph is perhaps a bit too lengthy and could be perhaps shortened a bit...}
Finally, we mention that Saloff-Coste and Zuniga~\cite{merging1,merging2} have generalised spectral and geometric techniques, such as Nash and log-Sobolev inequalities, to time-inhomogeneous Markov Chains (of which random walks on dynamic graphs are a subset). In particular, in contrast to our results, they study chains where the individual transition matrices might not have the same time-independent stationary distribution. For this reason, they focus on \emph{merging} properties of these chains, i.e., the ability of the chain to ``forget'' the initial distribution. They obtain bounds on merging for chains that satisfy the $c$-stability property, which implies (but it is not equivalent) that the stationary distributions of the individual transition matrices do not change too much over time. Unfortunately, proving that a time-inhomogeneous chain is $c$-stable is itself very difficult, and they are able to obtain concrete bounds on merging only for very simple time-inhomogeneous Markov chains.

%\textbf{Road Map.}
%In Section  \ref{sec:notation} we introduce the basic notation needed for our mathematical analysis. Section \ref{sec:worsthit} contains worst-case bounds on the hitting and mixing times for an arbitrary sequence of connected dynamic graphs. In Section \ref{sec:twopluseps}, we consider a setting where each graph in the sequence has a sufficiently high isoperimetric dimension. In Section \ref{sec:average}, we cover sequences of dynamic graphs in which most or possibly even all individual graphs of the sequence are disconnected, but the average transition matrix over an interval is required to have enjoy some basic connectivity (or expansion). In this section, we also present some specific examples of dynamic graphs that reveal some striking differences between random walks on static and dynamic graphs. Finally, in Section \ref{sec:cutsets} we deal with static graphs only and we derive bounds on the hitting time in terms of the edge connectivity. \NOTE{Do we actually need this paragraph?}

%\paragraph{Road Map.}
%In Section ?? we do...

\section{Notation and preliminaries}\label{sec:notation}
Let  $\mathcal{G} = \{G^{(t)}\}_{t=1}^{\infty}$ be an infinite sequence of undirected and unweighted graphs defined on the same vertex set $V$, with $|V| = n$.
We study (lazy) random walks on $\mathcal{G}$: suppose that at a time $t\ge 0$ a particle occupies a vertex $u \in V$. At step $t+1$ the particle will remain at the same vertex $u$ with probability $1/2$, or will move to a random neighbour of $u$ in $G^{(t)}$. In other words, it will perform a single random walk step according to a transition matrix $P^{(t)}$, which is the transition matrix of a lazy random walk on $G^{(t)}$: $P^{(t)}(u,u) =1/2 $, $P^{(t)}(u,v) = 1/(2d_u)$ if there  is an edge between $u$ and $v$ in $G^{(t)}$ (and in this case we write $u \sim_t v$), or $P^{(t)}(u,v) = 0$ otherwise.
% \[
% P^{(t)}(u,v) = 	\begin{cases}
% 			1/2 & u=v \\
% 			1/(2d_u) & u \sim_t v \\
% 			0 & o.w.
% 			\end{cases}
% \] where $d_u$ is the degree of $u$ in $G^{(t)}$ and $u \sim_t v$ if and only if there is an edge between $u$ and $v$ in $G^{(t)}$. 
We denote with $p^{[t_1,t_2]}_{u,v}$ the probability that a random walk that visits vertex $u \in V$ at time $t_1$ will visit $v \in V$ at time $t_2 \ge t_1$. Notice that given an initial probability distribution $p^{(0)} : V \to [0,1]$, $p^{[0,t]} = p^{(0)} P^{[0,t]} = p^{(0)} P^{(1)} P^{(2)} \cdots P^{(t)}$ is the probability distribution after $t$ steps.

Unless stated otherwise we assume that all the graphs in $\mathcal{G}$ are connected and have the same stationary distribution $\pi$, i.e., $\pi P^{(t)} = \pi$ for any $t \ge 0$. We denote the smallest value assumed by $\pi$ as $\pi_* = \min_{x \in V} \pi(x)$. We define the $\ell_2(\pi)$-inner product as
$ \langle f,g \rangle_{\pi} = \sum_{u \in V} f(u) g(u) \pi(u)$ for any $f,g: V \to \R$. Analogously, we denote with $\| f \|_{2,\pi} = \sqrt{\langle f,f \rangle_{\pi}}$ the $\ell_2(\pi)$-norm of $f: V \to \R$. Notice that since all the graphs in $\mathcal{G}$ are undirected, for any $t\ge 0$, $P^{(t)}$ is reversible with respect to $\pi$, i.e., $\pi(x) P^{(t)}(x,y) = \pi(y) P^{(t)}(y,x)$ for any $x,y \in V$ (this is also called the detailed balance condition). Moreover, $P^{(t)}$ is self-adjoint for the $\ell_2(\pi)$-inner product: for any $f,g: V \to \R$,
\begin{equation}
\label{eq:selfadjoint}
\langle P^{(t)} f, g \rangle_{\pi} = \langle f, P^{(t)} g \rangle_{\pi}.
\end{equation}
 %We remark that, however, unlike the time-homogeneous case, this does not imply that $P^{[0,t]}$ is reversible as well [CHECK].

We will often work with the likelihood ratio $\rho^{[0,t]}_{u,\cdot} = p^{[0,t]}_{u,\cdot} / \pi(\cdot)$. When it is clear from the context, we will drop the starting point $u$ and use the shorthands $p^{(t)}$ and $\rho^{(t)}$ to indicate (respectively) the probability distribution of the random walk at time $t$ and its likelihood ratio. We define the $\ell_2$ mixing time as 
\[
\tmix(\mathcal{G}) = \min \{t \colon \| \rho^{[0,t]}_{u,\cdot} - 1 \|_{2,\pi} \le 1/3 \text{ for any } u \in V\}.
\]
Observe that, since  $\Ev_{\pi} \rho^{(t)} = 1$, we have that $\| \rho^{(t)} - 1 \|_{2,\pi}^2 = \var_{\pi} \rho^{(t)} = \Ev_{\pi} \left(\rho^{(t)}\right)^2 - 1$.

Let $p$ be a probability distribution with likelihood ratio $\rho = p/\pi$. For a reversible $P$, 
\[
P\rho (u) = \sum_{v \in V} P(u,v) \rho(v) = \sum_{v \in V} P(u,v) \frac{p(v)}{\pi(v)} = \frac{1}{\pi(u)} \sum_{v \in V} P(v,u) p(v) = \frac{pP (u)}{\pi(u)},
\]
from which it follows that
$P^{(t)} \cdots P^{(1)}\rho^{(0)}(u) =   \rho^{(t)}(u)$.
%$P^{(t)} \cdots P^{(1)}\rho^{(0)}(u) =  \frac{p^{(0)}{P}^{(1)} \cdots {P}^{(t)}(u)}{\pi(u)} = \rho^{(t)}(u)$.

Given a transition matrix $P$ with stationary distribution $\pi$ and a function $f \colon V \to \R$, we define the Dirichlet form as
\[
\mathcal{E}_P(f,f) = \frac{1}{2} \sum_{u,v \in V} \left(f(u) - f(v)\right)^2 \pi(u) P(u,v).
\]
When $P$ is a transition matrix of a lazy random walk on a graph $G = (V,E)$ with $|E| = m$,
$\mathcal{E}_P(f,f) = \frac{1}{4m} \sum_{u \sim v} \left(f(u) - f(v)\right)^2$,
where $u \sim v$ stands for $\{u,v\} \in E$. As long as $P$ is lazy (i.e., $P(u,u) \ge 1/2$ for any $u \in V$), we can relate the $\ell_2^2$ distance of a distribution from stationary to its Dirichlet form \cite[Proposition 2.5]{nonrevFill}:
\begin{equation}
\label{eq:mihai}
\var_{\pi} \rho^{(t)}  \ge \var_{\pi} \rho^{(t+1)} + \mathcal{E}_{P^{(t+1)}}(\rho^{(t)} ,\rho^{(t)}).
\end{equation}
The \emph{spectral gap} of $P$ is defined as
\[
\lambda(P) = \inf _{\substack{f: V \to \R \\ \var_{\pi}{f} \neq 0}} \frac{\mathcal{E}_P(f,f)}{\var{f}}.
\]
We denote with $\Phi_P(A)$ the \emph{conductance} of a subset of vertices $A \subset V$ :
\[
\Phi_P(A) = \frac{\sum\limits_{{u \in A , v \not\in A}} \pi(u) P(u,v)}{\min\left\{\pi(A), \pi(V \setminus A)\right\}},
\]
where $\pi(A) = \sum_{u \in A} \pi(u)$. The conductance of $P$ is then defined as
$\Phi(P) = \min_{A \subset V} \Phi_P(A)$.
Cheeger's inequality \cite{SJ89} relates $\lambda(P)$ to the conductance $\Phi(P)$ of a reversible $P$: $2\Phi(P) \ge \lambda(P) \ge \Phi(P)^2/2$.

Given two vertices $u$ and $v$, we denote with $\tau_{u,v}$ the \emph{hitting time} of $v$ from $u$, i.e., the expected time to reach $v$ starting from $u$. The maximum hitting time is defined as $\thit(\mathcal{G}) = \max_{u,v} \tau_{u,v}$.   The \emph{commute time} between $u$ and $v$ is instead defined as $C_{u,v} = \tau_{u,v} + \tau_{v,u}$. Clearly, $\thit(\mathcal{G}) \le \max_{u,v} C_{u,v} \le 2\thit(\mathcal{G})$.  %Given a sequence of graphs $\mathcal{G}$, we denote with $m^*$ the largest number of edges of a graph in $\mathcal{G}$, and with $\delta$ the smallest degree of a vertex over all graphs in $\mathcal{G}$.

Finally, we write $A \lesssim B$, respectively $A \gtrsim B$, to mean that there exists some absolute constant $C > 0$, independent of the parameters of the sequence of graphs $\mathcal{G}$, such that  $A \le C \cdot B$, respectively $A \ge C \cdot B$.

\section{Worst-case bounds for mixing and hitting times}
\label{sec:worsthit}
In this section we assume that a particle performs a random walk on a sequence of graphs $\mathcal{G} = \{G^{(t)}\}_{t=1}^{\infty}$ where all the $G^{(t)}$ share the same set of $n$ vertices $V$, are connected, and have a time-independent stationary distribution $\pi$ with  $\pi_* = \min_u \pi(u)$. In general, graphs in the sequence might have a different number of edges. We denote with $m^* \le n^2$ the maximum number of edges a graph in the sequence can have. %Given any graph $G_t \in \mathcal{G}$, if $G_t$ has $m$ edges and minimum degree $\delta$, $\pi_* = \min_u \{\pi(u)\} = \delta/m$.

Our goal is to bound mixing and maximum hitting times of a random walk on $\mathcal{G}$. We start by studying the rate of convergence to stationarity. By \eq{mihai}, our goal then becomes to study $\var \rho^{(t)} \le \var{\rho^{(0)}} - \sum_{i=1}^{t-1} \mathcal{E}_{P^{(i)}}(\rho^{(i-1)} ,\rho^{(i-1)} )$. The next lemma provides a lower bound on the Dirichlet form of graphs in $\mathcal{G}$. The main insight of this lemma is that it shows a faster decrease of the $\ell_2$-distance to stationarity when this distance is large, i.e., at the beginning of the walk. This is in the same vein as, for example, bounds on mixing based on the spectral profile~\cite{spectralprof}.

\begin{lemma}
\label{lem:dirichlet}
 Let $P$ be the transition matrix of a lazy random walk on a graph $G \in \mathcal{G}$. Given a probability distribution $\sigma: V \to [0,1]$ with likelihood ratio $f = \sigma/\pi$ such that $\var_{\pi} f = \eps  > 0$, 
\[
\mathcal{E}_{P}(f,f) \gtrsim \max\left\{ \frac{\eps^2}{m^* + 1/(\pi_*^2(1+\eps))},  \frac{\pi_* \eps^2}{ n} \right\}.
\]
 \end{lemma}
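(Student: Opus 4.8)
The plan is to combine a few elementary consequences of the normalisation of $f$ with a discrete co-area / canonical-path estimate for the Dirichlet form.

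\emph{Step 1 (what $f$ must look like).} Since $\sigma$ is a probability distribution, $\Ev_\pi[f]=\sum_x\sigma(x)=1$ and $\Ev_\pi[f^2]=\var_\pi f+1=1+\eps$. From $\Ev_\pi[f^2]\le\|f\|_\infty\,\Ev_\pi[f]$ one gets $\max_x f(x)\ge 1+\eps$, while $\min_x f(x)\le\Ev_\pi[f]=1$, and (evaluating $\sum_x\pi(x)f(x)=1$ at the maximiser) $\pi_*(1+\eps)\le1$. Also, because $\pi$ is stationary for the lazy walk on $G$ (with $m:=|E(G)|\le m^*$), we have $d_x=2m\,\pi(x)$ for every $x$; combined with $1\le d_x\le n-1$ this yields $\tfrac1{2\pi_*}\le m\le\tfrac{n-1}{2\pi_*}$ and $\pi(x)P(x,y)=\tfrac1{4m}$ on each edge, so $\mathcal{E}_P(f,f)=\tfrac1{4m}\sum_{x\sim y}(f(x)-f(y))^2$.

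\emph{Step 2 (the transport estimates).} Order $V=\{v_1,\dots,v_n\}$ with $f(v_1)\ge\cdots\ge f(v_n)$, set $S_j=\{v_1,\dots,v_j\}$, write $|\partial S_j|$ for the number of edges of $G$ with exactly one endpoint in $S_j$, and put $a_j=f(v_j)-f(v_{j+1})\ge0$. The discrete co-area identity together with Cauchy--Schwarz and $|\partial S_j|\ge1$ (connectivity) give
\[
\sum_{x\sim y}(f(x)-f(y))^2\;\ge\;\sum_{j=1}^{n-1}|\partial S_j|\,a_j^2\;\ge\;\frac{\bigl(\sum_j a_j\bigr)^2}{\sum_j|\partial S_j|^{-1}}\;=\;\frac{(f(v_1)-f(v_n))^2}{\sum_j|\partial S_j|^{-1}}\;\ge\;\frac{\eps^2}{\sum_j|\partial S_j|^{-1}},
\]
i.e.\ $\mathcal{E}_P(f,f)\ge\eps^2/\bigl(4m\sum_j|\partial S_j|^{-1}\bigr)$ (equivalently, the Dirichlet/Thomson principle $\mathcal{E}_P(f,f)\ge(f(s)-f(t))^2/(4m\,R_{\mathrm{eff}}(s,t))$ with $s=v_1$, $t=v_n$; already $R_{\mathrm{eff}}\le\dist\le n-1$ gives the crude $\mathcal{E}_P(f,f)\gtrsim\eps^2/(mn)$). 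A second, Cheeger-type estimate uses $g=f^2$: by Cauchy--Schwarz and $\sum_{x\sim y}(f(x)+f(y))^2\le2\sum_x d_xf(x)^2=4m(1+\eps)$,
\[
2\eps+\eps^2\;\le\;f(v_1)^2-f(v_n)^2\;\le\;\sum_j|\partial S_j|\bigl(f(v_j)^2-f(v_{j+1})^2\bigr)\;\le\;\sqrt{4m\,\mathcal{E}_P(f,f)}\cdot\sqrt{4m(1+\eps)},
\]
so $\mathcal{E}_P(f,f)\gtrsim(2\eps+\eps^2)^2/\bigl(m^2(1+\eps)\bigr)$; this is the only one of the two estimates that keeps the factor $1+\eps$ and is robust to $m$ being large.

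\emph{Step 3 (assembling the two terms).} For $\pi_*\eps^2/n$: feed $m\le\tfrac{n-1}{2\pi_*}$ into the first estimate and bound $\sum_j|\partial S_j|^{-1}$; the bare $\le n$ loses a factor $n$, so one exploits $|\partial S_{j-1}|+|\partial S_j|\ge d_{v_j}=2m\pi(v_j)$ — consecutive cuts along the $f$-ordering cannot both be small — to replace it by a quantity of order $n/(m\pi_*)$. For $\eps^2/(m^*+\pi_*^{-2}(1+\eps)^{-1})$: the denominator equals, up to a constant factor, $\max\{m^*,\pi_*^{-2}(1+\eps)^{-1}\}$, so it suffices to prove $\mathcal{E}_P(f,f)\gtrsim\eps^2/m^*$ when $m^*$ dominates and $\mathcal{E}_P(f,f)\gtrsim\eps^2\pi_*^2(1+\eps)$ when it does not; the first uses $m\le m^*$ in the first estimate together with $\sum_j|\partial S_j|^{-1}=O(1)$ in that (well-connected) regime, and the second combines the $f^2$-Cheeger estimate (which supplies the $1+\eps$) with $m\le m^*$ and $m\le\tfrac{n-1}{2\pi_*}$. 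The precise bookkeeping amounts to a case split on the size of $\eps$ — equivalently, on whether the excess of $f$ above $1$ is carried by a set of large $\pi$-mass or concentrated on a small one — and is where the $\pi_*$'s in the stated denominator acquire their exact exponents.

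\emph{Main obstacle.} The delicate point is the sharp control of $\sum_{j=1}^{n-1}|\partial S_j|^{-1}$, i.e.\ of the ``resistive width'' of the superlevel-set sequence along the $f$-ordering: using only $|\partial S_j|\ge1$ is lossy by a factor $\sim n$, so the argument must genuinely exploit the identity $d_x=2m\pi(x)$ (hence the interplay of $m$, $n$ and $\pi_*$) level by level — cuts are forced to be large unless $G$ is essentially path-like, which is precisely the case $m=O(1/\pi_*)$ — and then dovetail the co-area estimate with the $f^2$-Cheeger estimate across the regimes of $\eps$ so that the denominator $m^*+\pi_*^{-2}(1+\eps)^{-1}$ emerges with the correct power of $\pi_*$.
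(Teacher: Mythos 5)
Your strategy is genuinely different from the paper's, and unfortunately it has real gaps in Step~3 that the sketch does not close.

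The paper's proof is short and uses a single canonical path, not a co-area/level-set decomposition: take $x=\argmax f$, use Markov's inequality to show that the set $\{z\colon f(z)>1+\eps/2\}$ has $\pi$-mass at most $1/(1+\eps/2)$, and combine this with a ball-growth lemma (\lemref{ballsize}) to conclude that some $y$ with $f(y)\le 1+\eps/2$ lies at graph distance $\ell\lesssim\min\{m/(\delta^2(1+\eps)),\,n/\delta\}$ from $x$. One Cauchy--Schwarz along the $x$--$y$ path then gives $\mathcal{E}_P(f,f)\gtrsim\eps^2/(m\ell)$, and both branches of the max drop out simultaneously from the two bounds on $\ell$. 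Your Steps~1 and~2 are correct in isolation, but your Step~3 is where the work actually is, and the hints you give there do not suffice.

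Concretely: (i) For the $\pi_*\eps^2/n$ branch you need $\sum_j\lvert\partial S_j\rvert^{-1}\lesssim n/\delta$, but ``consecutive cuts cannot both be small'' (from $\lvert\partial S_{j-1}\rvert+\lvert\partial S_j\rvert\ge d_{v_j}\ge\delta$) only prevents two adjacent cuts from both being $<\delta/2$; you can still have $n/2$ cuts equal to $1$, giving $\sum_j\lvert\partial S_j\rvert^{-1}=\Theta(n)$, which loses a factor $\delta$. To get $O(n/\delta)$ you need the stronger fact that a cut $\le\delta/4$ forces the \emph{next $\Omega(\delta)$} cuts to be $\ge\delta/4$ (because $v_{j+1}$ has $\ge\delta/2$ edges to distinct vertices in $\{v_{j+2},\ldots\}$, and each subsequent step removes at most one of them). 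The paper proves exactly this amortisation in the warm-up to \lemref{edgeconn}, but it is a separate idea not contained in your sketch. (ii) For the other branch your $f^2$-Cheeger estimate gives $\mathcal{E}_P(f,f)\gtrsim\eps^2(1+\eps)/m^2$, whereas in the regime $1/(\pi_*^2(1+\eps))\ge m^*$ you must show $\mathcal{E}_P(f,f)\gtrsim\eps^2\pi_*^2(1+\eps)=\eps^2(1+\eps)\,\delta^2/(4m^2)$. These differ by a factor $\delta^2$, so your estimate is only strong enough for bounded minimum degree, not in general. Likewise, the claim that $\sum_j\lvert\partial S_j\rvert^{-1}=O(1)$ ``in the well-connected regime'' is asserted but never justified, and is false in general (e.g.\ it fails already for a barbell). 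So as written the proposal does not prove either branch of the max; repairing it would essentially amount to re-importing, level-set by level-set, the degree/ball-volume argument that the paper uses in one stroke.
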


While the previous lemma will be directly used to derive bounds on mixing, to obtain a bound on the hitting time we will need to study $t$-step probabilities. For this reason,
 we prove a technical lemma that relates $2t$-step probabilities to the variance of the likelihood ratio of a $t$-step probability distribution, generalising a well-known result for time-homogeneous reversible Markov chains (see, e.g., \cite[Lemma 3.20]{aldousFill}). We remark, however, that while in time-homogeneous Markov chains $2t$-step transition probabilities will be as small as the variance of their $t$-step likelihood ratio, in our case, since the order in which transition matrices are applied can matter significantly, this might not be necessarily true: we can only relate these probabilities to the variance of the $t$-step likelihood ratio of a related but slightly different Markov chain.

\begin{lemma}
\label{lem:inftoell2}
Let $t_1 < t_2$. Then, for any $u,v \in V$, it holds that
\[
\left|\rho^{[t_1,t_2]}_{v,u} - 1 \right| \le \max\left\{\var_{\pi} \left( P^{(\floor{\frac{t_1+t_2}{2}})} \cdots P^{(t_2)}  \rho^{[t_1,t_1]}_{u,\cdot}\right) ,
				 \var_{\pi} \left(P^{(\floor{\frac{t_1+t_2}{2}-1})} \cdots P^{(1)}\rho^{[t_1,t_1]} _{v,\cdot} \right) \right\}.
\]
\end{lemma}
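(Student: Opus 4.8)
The plan is to mimic the classical argument for time-homogeneous reversible chains (as in \cite[Lemma 3.20]{aldousFill}), which expresses a $2t$-step transition probability as an $\ell_2(\pi)$-inner product of two $t$-step densities, and then to track carefully which products of transition matrices appear when the chain is time-inhomogeneous. Write $s = \floor{\frac{t_1+t_2}{2}}$, so that $t_1 \le s \le t_2$ and the two ``halves'' of the time interval are $[t_1,s]$ and $[s,t_2]$ (possibly of lengths differing by one). For a fixed starting vertex $w$, let $\delta_w$ denote the point mass at $w$, viewed as a density with respect to $\pi$ via the likelihood ratio $\rho^{[t_1,t_1]}_{w,\cdot}(x) = \mathbf{1}_{x=w}/\pi(w)$. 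The key algebraic identity I would establish is
\[
\rho^{[t_1,t_2]}_{v,u} = \left\langle\, P^{(s)} P^{(s-1)} \cdots P^{(t_1+1)}\,\rho^{[t_1,t_1]}_{v,\cdot}\;,\;\; P^{(s+1)} P^{(s+2)} \cdots P^{(t_2)}\,\rho^{[t_1,t_1]}_{u,\cdot}\,\right\rangle_{\pi},
\]
using only reversibility/self-adjointness of each $P^{(i)}$ (equation~\eqref{eq:selfadjoint}) to move the first block of matrices from acting on the $u$-density over to acting on the $v$-density. More concretely, $\rho^{[t_1,t_2]}_{v,u} = \big(P^{[t_1,t_2]}\rho^{[t_1,t_1]}_{u,\cdot}\big)(v)$, and since $P^{[t_1,t_2]}\rho^{[t_1,t_1]}_{u,\cdot}(v) = \langle P^{[t_1,t_2]}\rho^{[t_1,t_1]}_{u,\cdot}, \rho^{[t_1,t_1]}_{v,\cdot}\rangle_\pi$ (evaluation at $v$ is the inner product against the normalised point mass), one splits $P^{[t_1,t_2]} = (P^{(t_1+1)}\cdots P^{(s)})(P^{(s+1)}\cdots P^{(t_2)})$ and uses self-adjointness of the first factor to transfer it onto $\rho^{[t_1,t_1]}_{v,\cdot}$. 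I need to be slightly careful that the indexing convention $p^{[t_1,t_2]} = p^{(0)}P^{(t_1+1)}\cdots P^{(t_2)}$ matches what appears in the lemma statement (the lemma writes the second block as $P^{(\floor{(t_1+t_2)/2})}\cdots P^{(t_2)}$ and the first as $P^{(\floor{(t_1+t_2)/2}-1)}\cdots P^{(1)}$), and reconcile any off-by-one in where the middle index sits; this is the bookkeeping one must get exactly right.

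Once the inner-product identity is in hand, I would apply Cauchy--Schwarz in $\ell_2(\pi)$:
\[
\bigl|\rho^{[t_1,t_2]}_{v,u} - 1\bigr| = \bigl|\langle g_v, h_u\rangle_\pi - 1\bigr| = \bigl|\langle g_v - 1, h_u - 1\rangle_\pi\bigr| \le \|g_v - 1\|_{2,\pi}\,\|h_u - 1\|_{2,\pi},
\]
where $g_v$ and $h_u$ are the two $t$-step densities appearing above; here I use that $\langle g_v, 1\rangle_\pi = \langle 1, h_u\rangle_\pi = 1$ since applying a $\pi$-reversible matrix to a density preserves the property of being a density (total mass $1$), hence $\langle g_v-1,h_u-1\rangle_\pi = \langle g_v,h_u\rangle_\pi - 1$. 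Then bound the product of the two norms by the maximum of their squares, $\|g_v-1\|_{2,\pi}\|h_u-1\|_{2,\pi} \le \max\{\|g_v-1\|_{2,\pi}^2, \|h_u-1\|_{2,\pi}^2\}$, and recall from the preliminaries that $\|f-1\|_{2,\pi}^2 = \var_\pi f$ for any density $f$. This yields exactly the claimed bound with the two variance terms, provided $g_v$ and $h_u$ are identified with the two matrix products in the statement.

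The main obstacle is purely the index arithmetic: making the ``backward'' products $P^{(s)}\cdots P^{(t_1+1)}$ and $P^{(s-1)}\cdots P^{(1)}$ in my derivation line up with the exact expressions $P^{(\floor{(t_1+t_2)/2})}\cdots P^{(t_2)}$ and $P^{(\floor{(t_1+t_2)/2}-1)}\cdots P^{(1)}$ written in the lemma, including the fact that one block has one more matrix than the other when $t_2-t_1$ is odd, and that $\rho^{[t_1,t_1]}_{v,\cdot}$ rather than $\rho^{[0,t_1]}$ is the object being propagated (so the ``first half'' really runs matrices indexed from $1$ up, which is consistent with the chain being re-started as a point mass at time $t_1$ and then its density being pushed through a fresh product — this is exactly the ``related but slightly different Markov chain'' remark preceding the lemma). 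I would double-check by verifying the identity in the base case $t_2 = t_1+1$ and then noting the general case follows by the same splitting. The one genuinely conceptual point — and worth stating explicitly — is that unlike the time-homogeneous case we cannot conclude $g_v = h_v$ (the two half-products use disjoint and differently-ordered sets of transition matrices), which is why both variance terms must appear and why the bound is in terms of a $\max$.
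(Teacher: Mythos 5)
Your proposal is correct and mirrors the paper's own proof essentially step for step: the inner-product identity expressing $\rho^{[t_1,t_2]}_{v,u}$ as $\langle \rho^{[t_1,t_1]}_{u,\cdot},\, \rho^{[t_1,t_2]}_{v,\cdot}\rangle_\pi$ (which is the same as your $\bigl(P^{[t_1,t_2]}\rho^{[t_1,t_1]}_{u,\cdot}\bigr)(v)$), writing $\rho^{[t_1,t_2]}_{v,\cdot}$ as a product of matrices applied to $\rho^{[t_1,t_1]}_{v,\cdot}$, transferring half of the factors across the $\ell_2(\pi)$-inner product via self-adjointness of each $P^{(i)}$, then Cauchy--Schwarz followed by bounding the product of norms by the maximum of the squared norms (and using $\|f-1\|_{2,\pi}^2 = \var_\pi f$). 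The index discrepancy you flag (your $P^{(t_1+1)}$ versus the lemma's $P^{(1)}$) is present in the paper's own derivation as well and appears to be a slip under the tacit assumption $t_1=0$; your reconciliation is the right one, and the rest of your plan is exactly what the paper does.
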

% \begin{proof}
% First notice that, for $t_2 > t_1$ and any $u,v \in V$, $\rho^{[t_1,t_2]}_{v,u} = \left\langle \rho^{[t_1,t_1]}_{u,\cdot} ,\rho^{[t_1,t_2]}_{v,\cdot} \right\rangle_{\pi}$.
% This can be easily seen by the fact that $\rho^{[t_1,t_1]}_{u,\cdot}$ is equal to $1/\pi(u)$ at $u$ and to zero everywhere else. Therefore, using the fact that, for a transition matrix $P$, $P1 = 1$, and the fact that $\Ev_{\pi} f = 1$ for any likelihood $f$,
% \begin{align*}
% &\left|\rho^{[t_1,t_2]}_{v,u} - 1\right| \\
% 		&\qquad=\left|\left\langle \rho^{[t_1,t_1]}_{u,\cdot} - 1,\rho^{[t_1,t_2]}_{v,\cdot} -1 \right\rangle_{\pi} \right| \\
% 		&\qquad= \left|\left\langle \rho^{[t_1,t_1]}_{u,\cdot} - 1 ,P^{(t_2)} \cdots P^{(t_1)} (\rho^{[t_1,t_1]} _{v,\cdot} - 1) \right\rangle_{\pi} \right| \\
% 		&\qquad= \left|\langle   P^{(\floor{(t_1+t_2)/2})} \cdots P^{(t_2)}  (\rho^{[t_1,t_1]}_{u,\cdot}-1), 
% 				P^{(\floor{(t_1+t_2)/2-1})} \cdots P^{(1)}(\rho^{[t_1,t_1]} _{v,\cdot} - 1)\rangle_{\pi}\right|  \\
% 		&\le \max\left\{\| P^{(\floor{(t_1+t_2)/2})} \cdots P^{(t_2)}  \rho^{[t_1,t_1]}_{u,\cdot} - 1 \|_\pi^2, 
% 				 \|P^{(\floor{(t_1+t_2)/2-1})} \cdots P^{(1)}\rho^{[t_1,t_1]} _{v,\cdot} - 1\|_{\pi}^2\right\},
% \end{align*}
% where the third equality follows from multiple applications of reversibility,  i.e., \eq{selfadjoint}, and the last line by the Cauchy-Schwarz inequality. The statement follows by the definition of $\var_{\pi} f$.
% \end{proof}
 
Using \lemref{dirichlet} and \lemref{inftoell2} we can obtain almost optimal worst case bounds on mixing, hitting, and $t$-step probabilities of a random walk on $\mathcal{G}$. In particular, when $\mathcal{G}$ comprises only regular graphs, the next theorem implies a $O(n^2)$ bound on mixing and hitting times, which matches the well-known results for a random walk on a static undirected graph. In the general non-regular case, we prove a $O(n^3)$ bound on mixing and a $O(n^3 \log{n})$ bound on hitting, which almost matches the $O(n^3)$ bound for mixing and hitting in static graphs. This improves upon \cite{avin}, which presents a bound of $O(n^3 \log{n})$ for hitting on regular graphs and a bound of $O(n^5 \log{n})$ for hitting in the general case.

\begin{theorem}
\label{thm:worsthit}
Let $\mathcal{G}$ be a sequence of connected graphs with $n$ vertices, the same stationary distribution $\pi$, and at most $m^*$ edges in each graph. Then, for a lazy random walk on $\mathcal{G}$:
\begin{enumerate}\itemsep0pt
\item $\tmix(\mathcal{G}) = O(n/\pi_*)$,
\item $\bigl| \frac{p^{[0,t]}_{u,v}}{\pi_v} - 1 \bigr| \lesssim \frac{m^*}{t} + \frac{1}{\pi_* \sqrt{t}}$, simplifying to $\bigl| \frac{p^{[0,t]}_{u,v}}{\pi_v} - 1 \bigr| \lesssim \frac{n}{\sqrt{t}}$  if all the graphs in $\mathcal{G}$ are $d$-regular,
\item $\thit(\mathcal{G}) = O(n \log{n}/\pi_*)$. Furthermore, if the graphs in $\mathcal{G}$ are $d$-regular,  $\thit(\mathcal{G}) = O(n^2)$.
\end{enumerate} 
\end{theorem}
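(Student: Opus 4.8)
The three parts are proved in order, with \lemref{dirichlet} and \lemref{inftoell2} doing the work; throughout write $v_t := \var_\pi \rho^{(t)}$, so that $\|\rho^{(t)} - 1\|_{2,\pi}^2 = v_t$. For Part~1 (mixing), note that as long as $v_t > 0$, combining the variance‑decay inequality \eqref{eq:mihai} with the second lower bound of \lemref{dirichlet} (applied with $f = \rho^{(t)}$, $\eps = v_t$) gives $v_t - v_{t+1} \gtrsim \pi_* v_t^2 / n$. Since $v_{t+1} \le v_t$, this yields $v_{t+1}^{-1} - v_t^{-1} = (v_t - v_{t+1})/(v_t v_{t+1}) \gtrsim \pi_*/n$, hence $v_t^{-1} \gtrsim \pi_* t / n$ and $v_t \lesssim n/(\pi_* t)$. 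Thus $v_t \le 1/9$ once $t = O(n/\pi_*)$, and since the estimate is uniform in the starting vertex, $\tmix(\calG) = O(n/\pi_*)$.

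For Part~2, apply \lemref{inftoell2} with $t_1 = 0$ and $t_2 = t$: $\lvert \rho^{[0,t]}_{u,v} - 1\rvert$ is bounded by the larger of two variances, each of which is $\var_\pi$ of the likelihood ratio, after $\approx t/2$ steps, of a walk that applies the matrices $P^{(i)}$ in a \emph{reversed} order and starts from a point mass. The point is that such a reshuffled walk is again a lazy $\pi$‑reversible walk on connected $n$‑vertex graphs with at most $m^*$ edges, so \eqref{eq:mihai} and \lemref{dirichlet} apply to it verbatim, and it suffices to show $v_s \lesssim m^*/s + 1/(\pi_*\sqrt{s})$ for any such walk. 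From the recursion $v_s - v_{s+1} \gtrsim v_s^2 \big/ \big(m^* + \pi_*^{-2}(1+v_s)^{-1}\big)$ supplied by the first bound of \lemref{dirichlet}, I split into three phases according to the magnitude of $v_s$ (using $m^* \le n^2 \le \pi_*^{-2}$ throughout): while $v_s \gtrsim (\pi_*^2 m^*)^{-1}$ the denominator is $\Theta(m^*)$, the recursion is quadratic, and $1/v$‑telescoping gives $v_s \lesssim m^*/s$; while $1 \le v_s \lesssim (\pi_*^2 m^*)^{-1}$ the denominator is $\Theta(\pi_*^{-2} v_s^{-1})$, the recursion is cubic, and $1/v^2$‑telescoping gives $v_s \lesssim 1/(\pi_*\sqrt{s})$; and once $v_s < 1$ the denominator is $\Theta(\pi_*^{-2})$ and $1/v$‑telescoping gives $v_s \lesssim 1/(\pi_*^2 s) \le 1/(\pi_*\sqrt{s})$. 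Verifying that consecutive phases match at their boundary times yields the claimed bound, and substituting $\pi_* = 1/n$ and $m^* = nd/2$ for $d$‑regular graphs makes the $m^*/t$ term lower order, giving $\lesssim n/\sqrt{t}$.

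For Part~3, I use a windowing argument. Let $H_v = \inf\{t \ge 1 : X_t = v\}$ (so $\tau_{u,v} = \mathbb{E}_u[H_v]$), let $N_v(W)$ count the visits to $v$ in the first $W$ steps, let $\beta(r)$ denote the Part~2 bound on $\lvert \rho^{[\cdot,\cdot+r]}_{\cdot,\cdot} - 1 \rvert$ truncated at the trivial value $\pi_*^{-1}$, and fix a window length $W$. For $t \ge W/2$, Part~2 gives $p^{[0,t]}_{u,v} \ge \pi_v/2$, so $\mathbb{E}_u[N_v(W)] = \sum_{r=1}^W p^{[0,r]}_{u,v} \ge \pi_v \sum_{r=1}^W (1 - \beta(r))$; conversely, conditioning on the first visit to $v$ and applying Part~2 to the walk restarted there bounds the expected number of visits to $v$ given that $v$ is hit by $1 + \pi_v\big(W + \sum_{r=1}^W \beta(r)\big)$. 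Since $\sum_{r=1}^W \beta(r) \lesssim m^*\log(1/\pi_*) + \sqrt{W}/\pi_*$, a suitable $W = O(n\log n/\pi_*)$ makes $\mathbb{E}_u[N_v(W)]$ dominate $\mathbb{E}_u[N_v(W) \mid H_v \le W]$ up to a constant, whence $\mathbb{P}_u[H_v \le W] \gtrsim 1$ uniformly in $u$; the standard restart argument (using $\mathbb{P}_u[H_v > (k+1)W] \le \mathbb{P}_u[H_v > kW] \cdot \max_w \mathbb{P}_w[H_v > W]$) then gives $\thit(\calG) = O(W) = O(n\log n/\pi_*)$. For $d$‑regular graphs one instead feeds in the sharper bound $\beta(r) \lesssim n/\sqrt{r}$ together with the geometric decay of $v_s$ past the mixing time, so that $\sum_{r=1}^W \beta(r) = O(n\sqrt{W} + n^2)$ and $W = \Theta(n^2)$ suffices, giving $\thit(\calG) = O(n^2)$.

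The main obstacle is Part~2: \lemref{inftoell2} pushes the analysis onto reshuffled chains rather than the original walk, and the target $m^*/t + 1/(\pi_*\sqrt{t})$ is not produced by a single recursion but must be assembled from a quadratic and a cubic phase with matching boundary behaviour. Parts~1 and~3 are then comparatively routine, the only delicate point in Part~3 being the bookkeeping of the expected return count $\sum_{r=1}^W \beta(r)$ so that the window stays of the claimed length.
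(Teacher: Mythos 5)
Parts~1 and~2 follow essentially the same route as the paper: Part~1 is the paper's halving argument rewritten as a $1/v$-telescoping, and your three-phase analysis in Part~2 is a fleshed-out version of the paper's terse observation that \lemref{dirichlet} halves $\var_\pi\rho^{(t)}=\eps^{(t)}$ in $O\bigl(m^*/\eps^{(t)}+1/(\pi_*^2\eps^{(t)}(1+\eps^{(t)}))\bigr)$ steps; both are fine once you note, as you do, that the ``reshuffled'' chains produced by \lemref{inftoell2} are again lazy $\pi$-reversible walks on graphs from $\calG$.

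The genuine gap is in the non-regular case of Part~3. You bound $\sum_{r=1}^{W}\beta(r)$ using \emph{only} the Part~2 estimate $\beta(r)\lesssim m^*/r + 1/(\pi_*\sqrt r)$, obtaining $\sum_r\beta(r)\lesssim m^*\log(1/\pi_*)+\sqrt W/\pi_*$. With $W=\Theta(n\log n/\pi_*)$ and the worst-case target $v$ having $\pi_v=\pi_*$, this gives $\E{N_v(W)\mid H_v\le W}\lesssim 1+\pi_*W+\pi_*m^*\log(1/\pi_*)+\sqrt W$, and the $\sqrt W$ term dominates $\E{N_v(W)}\gtrsim \pi_*W$ whenever $\sqrt W>\pi_*W$, i.e.\ $W<1/\pi_*^2$. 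That regime is $\pi_*=o(1/(n\log n))$, which is perfectly possible (e.g.\ a lollipop has $\pi_*\asymp 1/n^2$, so $W\asymp n^3\log n$ while $1/\pi_*^2\asymp n^4$); there your ratio $\E{N_v(W)}/\E{N_v(W)\mid H_v\le W}$ tends to zero rather than being $\Omega(1)$, and the windowing argument does not close. What the paper actually uses at this point is the \emph{other} decay supplied by \lemref{inftoell2}: $\beta(r)\lesssim n/(\pi_*r)$, coming from the $\pi_*\eps^2/n$ term of \lemref{dirichlet} (the same bound driving your Part~1), not the $m^*$-profile term. Truncating at $1/\pi_v$ and summing yields $\sum_{r=1}^{W}\beta(r)\lesssim (n/\pi_*)\log(1/\pi_*)$, and since $\log(1/\pi_*)=O(\log n)$ (because $\pi_*\ge 1/(2m^*)$), the window $W=\Theta(n\log n/\pi_*)$ then really makes the ratio $\Omega(1)$; equivalently, the paper takes $T=\Theta(n/\pi_*)$, gets $\Pr{Z\ge 1}=\Omega(1/\log n)$, and spends the $\log n$ factor on restarts instead. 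Your regular-graph subcase is unaffected, since there $\pi_*=1/n$ and $\sqrt W\le\pi_*W$ already at $W\asymp n^2$, and the $n/\sqrt r$ decay you invoke is exactly the paper's bound \eqref{eq:returns_reg}.
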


The proof, which is deferred to the appendix, proceeds as follows. First we establish the bound on the mixing time based on \lemref{dirichlet}, which readily implies that starting from a distance to stationarity equal to $\eps$, such distance is halved in $O(n/(\eps \pi_*))$ steps. We then connect distance to stationarity to $t$-step probabilities with \lemref{inftoell2}, obtaining the second result of \thmref{worsthit}. Finally, to bound the hitting time, we employ a probabilistic argument already exploited in, e.g., \cite{KMS16}, and which makes use of both our bounds on mixing time and on $t$-step probabilities. %\NOTE{The proof sketch is very brief for now}

\section{Bounds on hitting times based on the isoperimetric dimension}
\label{sec:twopluseps}
Aldous and Fill conjectured in their book \cite[Open Problem 6.20]{aldousFill} that whenever a regular bounded-degree graph satisfies $|E(A,A^c)| = \Omega(|A|^{\frac{1}{2}+ \eps})$ for any small positive $\eps$,  the maximum hitting time should be $O(n)$. %It should be noted that this does not hold for unbounded degree graphs, as shown by taking two cliques of size $n$ and connecting them by $o(n)$ edges (this counterexample is not regular but can be easily made so). 
Observe that this isoperimetric condition is satisfied by the torus in $3$ or higher dimensions, which has indeed $O(n)$ maximum hitting time. Furthermore, to have $O(n)$ maximum hitting time, $\eps$ needs to be strictly greater than zero: take for example the $2$-dimensional torus: there is a set $A$ for which $|E(A,A^c)| = \Theta(|A|^{1/2})$ and, indeed, the maximum hitting time is $\Theta(n \log{n})$. %The conjecture by Aldous and Fill essentially states that a graph only needs to have a slightly better isoperimetric condition than the $2$-dimensional torus to achieve the best possible maximum hitting time.

The conjecture was first proved in \cite{benjamini}, with a proof based on the relation between commute times and effective resistances in a graph. Since a similar relation is not known for time-inhomogeneous Markov chains, such a proof cannot be generalise to random walks on dynamic graphs. In this section we present a new proof of this result based on the ``conditional expectation trick'' already used in the proof of \thmref{worsthit}. We start by obtaining a bound on the Dirichlet form of a graph satisfying the aforementioned isoperimetric condition. 

\begin{lemma}
\label{lem:decrease_eps}
Let $G=(V,E)$ be a $d$-regular undirected graph with $|V| = n$ and $d=O(1)$ such that, for any $A \subset V$ with $1 \le |A| \le n/2$,
$
|E(A,V \setminus A)| = \Omega(|A|^{\frac{1}{2} + \eps})
$ for $1/4 \ge \eps \ge 0$. Consider the transition matrix $P$ of a lazy random walk in $G$. Let $\sigma$ be any probability distribution and $f = \sigma/\pi$, where $\pi$ is the uniform distribution.  If $\Ev_{\pi} f^2 = \beta > C$ for a large enough constant $C$, then
\[
\mathcal{E}_P(f,f) \gtrsim  \frac{\beta^{2-2\eps}}{n^{1-2\eps}}.
\]
\end{lemma}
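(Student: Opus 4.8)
The plan is to bound the Dirichlet form $\mathcal{E}_P(f,f) = \frac{1}{4m}\sum_{u\sim v}(f(u)-f(v))^2$ from below using the isoperimetric hypothesis, via a co-area / layer-cake decomposition of the function $f^2$ (or of $f$ directly). Since $d = O(1)$, we have $m = \Theta(n)$, so it suffices to lower bound $\sum_{u \sim v}(f(u)-f(v))^2$ by $\gtrsim \beta^{2-2\eps} n^{2\eps}$. First I would reduce to a convenient normalisation: writing $g = f^2$, note $\E_\pi g = \beta$ and $\E_\pi g^{?}$... actually the cleaner route is to work with $f$ and observe that, since $\sigma$ is a probability distribution, $\|f\|_{1,\pi} = \E_\pi f = 1$, while $\E_\pi f^2 = \beta$. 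I would order the vertices $v_1, v_2, \dots, v_n$ so that $f(v_1) \ge f(v_2) \ge \cdots \ge f(v_n) \ge 0$, and for each threshold level consider the sublevel/superlevel sets $A_j = \{v_1,\dots,v_j\}$. The edge-isoperimetric hypothesis gives $|E(A_j, V\setminus A_j)| = \Omega(\min\{j, n-j\}^{1/2+\eps})$.

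The key step is a standard "Dirichlet form vs. isoperimetry" inequality in the spirit of the proof of Cheeger's inequality: by Cauchy–Schwarz applied to the telescoping of $f^2$ across the cut edges, one gets
\[
\Bigl(\sum_{u\sim v}|f(u)^2 - f(v)^2|\Bigr)^2 \;\le\; \Bigl(\sum_{u\sim v}(f(u)-f(v))^2\Bigr)\Bigl(\sum_{u\sim v}(f(u)+f(v))^2\Bigr),
\]
and $\sum_{u\sim v}(f(u)+f(v))^2 \le 2\sum_{u\sim v}(f(u)^2+f(v)^2) \lesssim d\sum_u f(u)^2 \lesssim n\beta$ since $d=O(1)$. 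On the other hand, the left-hand numerator can be rewritten, via the layer-cake formula, as $\sum_{u\sim v}|f(u)^2-f(v)^2| = \int_0^\infty |E(\{f^2 > s\}, \{f^2 \le s\})|\,ds = \sum_j (f(v_j)^2 - f(v_{j+1})^2)\,|E(A_j, V\setminus A_j)|$, which by the isoperimetric hypothesis is $\gtrsim \sum_j (f(v_j)^2 - f(v_{j+1})^2)\,\min\{j,n-j\}^{1/2+\eps}$. Putting these together yields
\[
\mathcal{E}_P(f,f) \;\gtrsim\; \frac{1}{n}\cdot\frac{\bigl(\sum_j (f(v_j)^2 - f(v_{j+1})^2)\,\min\{j,n-j\}^{1/2+\eps}\bigr)^2}{n\beta}.
\]

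The remaining work, and the main obstacle, is to show that the "weighted isoperimetric sum" $S := \sum_j (f(v_j)^2 - f(v_{j+1})^2)\,\min\{j,n-j\}^{1/2+\eps}$ is $\gtrsim \beta^{3/2 - \eps} n^{\eps}$ — which combined with the display above gives exactly $\mathcal{E}_P(f,f) \gtrsim \beta^{2-2\eps}/n^{1-2\eps}$. By Abel summation, $S = \sum_j f(v_j)^2 \bigl(w_j - w_{j-1}\bigr)$ where $w_j = \min\{j,n-j\}^{1/2+\eps}$, i.e. $S$ is a weighted average of $f^2$ against the increments of $w$. The intuition is that $f^2$ has $\ell_1(\pi)$-mass relatively spread out (since $\E_\pi f = 1$ but $\E_\pi f^2 = \beta$ is large, forcing $f$ to take large values on a set of measure $\gtrsim 1/\beta$), so $S$ cannot be too small; the worst case is when $f^2$ is as concentrated as possible on the highest-ranked vertices. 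Here I expect one needs a careful optimisation: the adversary wants to concentrate $f^2$-mass near the top indices $j$ small, where the increments of $w_j \approx j^{1/2+\eps}$ are largest, but the constraint $\E_\pi f = \frac1n\sum_j f(v_j) = 1$ (equivalently $\sum_j f(v_j) = n$) prevents $f$ from being supported on too few vertices. A clean way to make this rigorous is to prove the contrapositive bound on the level sets: if $\mathcal{E}_P(f,f)$ were too small, then for every level $s$ the set $\{f^2 > s\}$ would have tiny edge-boundary, forcing $|\{f^2>s\}| \ge n/2$ or $\le$ some small value via the isoperimetric inequality, and then integrating over $s$ would contradict either $\E_\pi f^2 = \beta$ or $\E_\pi f = 1$. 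I would carry out the level-set optimisation explicitly, splitting into the regime where most $f^2$-mass sits on sets of size $\le n/2$ (the typical case) and handling the tail; this balancing of the $\ell_1$ and $\ell_2$ constraints against the sublinear isoperimetric profile is the technical heart and where the exponent $2-2\eps$ precisely emerges.
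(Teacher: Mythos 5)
Your route is genuinely different from the paper's. The paper works directly with the ``high-mass'' set $K=\{u: f(u)\ge\beta/10\}$: it uses Markov's inequality to bound $|K|$, applies the isoperimetric hypothesis to extract $\Omega(|K|^{1/2+\eps})$ vertex-disjoint paths from $K$ into the low-mass region, bounds each path's contribution to $\mathcal{E}_P(f,f)$ by Cauchy--Schwarz along the path, and then combines via the harmonic--arithmetic mean inequality (with a bucketing argument to deal with the case that $K$ is small). You instead sort the vertices by $f$, invoke the Cheeger-type inequality $\bigl(\sum_{u\sim v}|f(u)^2-f(v)^2|\bigr)^2\le\bigl(\sum(f(u)-f(v))^2\bigr)\bigl(\sum(f(u)+f(v))^2\bigr)$, apply the co-area formula to the level sets of $f^2$, and reduce the lemma to a lower bound on a weighted isoperimetric sum $S$. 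These are distinct strategies, and yours is closer in spirit to the classical Cheeger-inequality machinery.

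However, there are two concrete problems. First, an arithmetic error: from your display $\mathcal{E}_P(f,f)\gtrsim S^2/(n^2\beta)$, the target $\mathcal{E}_P(f,f)\gtrsim\beta^{2-2\eps}/n^{1-2\eps}$ requires $S\gtrsim\beta^{3/2-\eps}n^{1/2+\eps}$, not $S\gtrsim\beta^{3/2-\eps}n^{\eps}$ as you claim; with the exponent you wrote, the combination only yields $\mathcal{E}_P(f,f)\gtrsim\beta^{2-2\eps}/n^{2-2\eps}$, off by a full factor of $n$. A quick sanity check: the ``rectangle'' profile $f\equiv\beta$ on $n/\beta$ vertices and $0$ elsewhere satisfies $\Ev_\pi f=1$, $\Ev_\pi f^2=\beta$, and gives $S=\beta^2\cdot(n/\beta)^{1/2+\eps}=\beta^{3/2-\eps}n^{1/2+\eps}$, confirming the corrected exponent (and also showing your target, once fixed, is tight).

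Second, and more importantly, you explicitly defer the lower bound on $S$ --- which you correctly identify as the technical heart of the argument --- to a ``careful level-set optimisation'' balancing the two constraints $\Ev_\pi f=1$ and $\Ev_\pi f^2=\beta$ against the sublinear weights $\min\{j,n-j\}^{1/2+\eps}$, without actually carrying it out. The difficulty is real: the weight is not monotone in $j$ (it decreases past $n/2$), and the admissible profiles must simultaneously respect the $\ell_1(\pi)$ and $\ell_2(\pi)$ constraints, so a one-parameter extremal family does not immediately suffice. It is plausible that the argument can be closed --- the rectangle case appears extremal, and other bimodal or power-law profiles I can test yield $S\gtrsim n\beta\ge\beta^{3/2-\eps}n^{1/2+\eps}$ --- but as written the proposal has a gap precisely at the step where the exponent $2-2\eps$ is supposed to emerge.
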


We now apply the previous lemma to prove the main result of this section, in an analogous way to the proof of \thmref{worsthit}.

\begin{theorem}
\label{thm:twopluseps}
Let $\mathcal{G}=\{G^{(t)}\}_{t=1}^{\infty}$ be a sequence of $n$-vertex graphs such that each $G^{(t)}$ is regular, has bounded degree, and satisfies the following isoperimetric condition: there exists $\eps \in [0,1/4]$ such that, for any subset of vertices $A$ with $1 \le |A| \le n/2$, $|E(A,V \setminus A)| = \Omega(|A|^{\frac{1}{2} + \eps})$. Then,
\begin{enumerate}\itemsep0pt
\item $\tmix(\mathcal{G}) = O(n^{1-2\eps})$,
\item $\bigl| \frac{p^{[0,t]}_{u,v}}{n} - 1 \bigr| = O\left(\frac{1}{t^{1+2\eps}}\right)$,
\item $\thit(\mathcal{G}) = O(n)$ if $\eps>0$, $\thit(\mathcal{G}) = O(n\log{n})$ if $\eps=0$.
\end{enumerate} 
\end{theorem}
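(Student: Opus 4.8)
The plan is to follow the template of \thmref{worsthit}, replacing \lemref{dirichlet} by \lemref{decrease_eps}, and to read off all three items from one variance‑decay estimate for the likelihood ratio.

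\textbf{Step 1 (variance decay).} Since $\pi$ is uniform, $\pi_* = 1/n$, and from any start $w$ we have $\var_{\pi}\rho^{[0,0]}_{w,\cdot} \le 1/\pi_* - 1 = n-1$. By \eq{mihai} the sequence $\var_{\pi}\rho^{(t)}$ is non‑increasing, and while $\Ev_{\pi}(\rho^{(t)})^2 = 1 + \var_{\pi}\rho^{(t)}$ exceeds the constant $C$ of \lemref{decrease_eps}, one step decreases it by at least $c\,(\var_{\pi}\rho^{(t)})^{2-2\eps}/n^{1-2\eps}$. Writing $x_t = \var_{\pi}\rho^{(t)}/n$, this becomes $x_{t+1} \le x_t - c\,x_t^{2-2\eps}$, whose solution is $x_t = O(t^{-1/(1-2\eps)})$; hence $\var_{\pi}\rho^{(t)} = O(n/t^{1/(1-2\eps)})$, which drops below $C$ within $O(n^{1-2\eps})$ steps. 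For the remaining constant‑factor range I use Cheeger's inequality \cite{SJ89}: the isoperimetric hypothesis gives $\Phi_{P^{(t)}}(A) = |E(A, V\setminus A)|/(2d|A|) = \Omega(|A|^{\eps-1/2})$ for $|A|\le n/2$ (using $d = O(1)$), minimised at $|A| = n/2$, so $\Phi(P^{(t)}) = \Omega(n^{\eps-1/2})$ and $\lambda(P^{(t)}) = \Omega(n^{2\eps-1})$; then \eq{mihai} with $\mathcal{E}_P(f,f)\ge\lambda(P)\var_{\pi}f$ yields $\var_{\pi}\rho^{(t+1)}\le(1-\Omega(n^{2\eps-1}))\var_{\pi}\rho^{(t)}$, so $O(n^{1-2\eps})$ further steps bring it below $1/9$. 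This gives item (1); and feeding the same decay estimate into \lemref{inftoell2} (applied with $t_1 = 0$, $t_2 = t$, so that the two sub‑chains have length $\approx t/2$ and initial variance $\le n-1$) gives the pointwise $t$‑step bound (2).

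\textbf{Step 2 (hitting times).} I then run the ``conditional‑expectation trick'' of \thmref{worsthit} (cf.\ \cite{KMS16}). Fix $u, v$; pick $T_0 = \Theta(n^{1-2\eps})$ large enough that $|\rho^{[0,s]}_{w,v} - 1| \le 1/2$ for every $w$ and every $s \ge T_0$ (possible by (2)); and look at the block $[T_0, T_0 + \ell]$. Let $N$ count the visits to $v$ during the block. Then $\E{N} = \Theta(\ell/n)$, while for $s < s'$ we have $\Pr{\text{the walk is at }v\text{ at times }s\text{ and }s'} = p^{[0,s]}_{u,v}\,p^{[s,s']}_{v,v} \le \frac{2}{n}\,p^{[s,s']}_{v,v}$, and by (2) the return‑probability excess satisfies $p^{[s,s']}_{v,v} - \frac1n = O\bigl((s'-s)^{-(1+2\eps)}\bigr)$, so $\sum_{k=1}^{\ell}\bigl(p^{[s,s+k]}_{v,v} - \frac1n\bigr)$ is $O(1)$ if $\eps > 0$ and $O(\log\ell)$ if $\eps = 0$. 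Summing, $\E{N^2} = O(\ell^2/n^2 + \ell/n)$ for $\eps > 0$ and $O(\ell^2/n^2 + \ell\log\ell/n)$ for $\eps = 0$, so the second‑moment inequality $\Pr{N\ge1}\ge(\E{N})^2/\E{N^2}$ yields $\Pr{N\ge1} = \Omega(1)$ once $\ell = \Theta(n)$ (resp.\ $\ell = \Theta(n\log n)$). Thus, starting from any vertex, the walk visits $v$ within $T_0 + \ell = O(n)$ (resp.\ $O(n\log n)$) steps with probability $\Omega(1)$; iterating over consecutive blocks, the number of blocks until success is dominated by a geometric variable, giving $\thit(\mathcal{G}) = O(n)$ when $\eps > 0$ and $\thit(\mathcal{G}) = O(n\log n)$ when $\eps = 0$, which is item (3).

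\textbf{Main obstacle.} The delicate part is Step 2: restarting after an unsuccessful block must be legitimate, which rests on the mixing and $t$‑step bounds of (1)--(2) holding uniformly over the (adversarially chosen) starting vertex and the (adversarial) continuation of the sequence; and the second‑moment estimate must be carried out carefully enough to pin down $\eps = 0$ as exactly the threshold at which the extra logarithm appears --- precisely where the series $\sum_{k\ge1}k^{-(1+2\eps)}$ stops converging. By comparison, solving the variance recursion and splicing in Cheeger's inequality over the short range where \lemref{decrease_eps} no longer applies are routine.
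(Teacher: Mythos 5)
Your proof follows the same route as the paper: use \lemref{decrease_eps} to drive the variance recursion for the likelihood ratio, convert to pointwise $t$-step estimates via \lemref{inftoell2}, and then run the visit-counting argument from the proof of \thmref{worsthit} in blocks after an initial mixing phase. The only real differences are cosmetic and in your favour: you use Paley--Zygmund ($\Pr{N\ge1}\ge(\E{N})^2/\E{N^2}$) where the paper uses the identity $\Pr{Z\ge1}=\E{Z}/\E{Z\,|\,Z\ge1}$ (these are interchangeable here), and you explicitly patch the regime $\var_\pi\rho^{(t)}=O(1)$ with a Cheeger bound $\lambda(P^{(t)})=\Omega(n^{2\eps-1})$ --- a gap the paper's proof leaves implicit, since \lemref{decrease_eps} only applies while $\Ev_\pi f^2>C$, yet the mixing definition and the lower bound $\E{Z}=\Omega(T/n)$ both need the variance to drop below $1$. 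That addition is a genuine completeness improvement, not a different method.
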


\section{Bounds on mixing based on average transition probabilities}
\label{sec:average}
Unlike in the time-homogeneous case, eigenvalues of the individual transition matrices of a time-inhomogenous Markov chain are not necessarily indicative of its mixing time, even when there exists a unique time-independent stationary distribution. 
%Suppose for example that every transition matrix has spectral gap roughly equal to $\lambda$ and the same stationary distribution $\pi$: even though, as argued in [REF], the mixing time will be at most $O(\log{n}/\lambda)$, it can also be drastically smaller. This is in contrast to random walks on static graphs, where we  have a lower bound of $\Omega(1/\lambda)$ on the mixing time of a random walk with spectral gap $\lambda$ (see, e.g., \cite[Theorem 4.9]{tetaliMontenegro}). 
An emblematic example is the following: consider a sequence of graphs $\mathcal{G} = \{G^{(t)}\}_{t=1}^{\infty}$ defined over a vertex set $V = \{1,\dots,2n\}$ such that, at odd $t$, $G^{(t)}$ is the union of two expanders (graphs with constant spectral gap), one over $\{1,\dots,n\}$, the other over $\{n+1,\dots,2n\}$, while at even $t$, $G^{(t)}$ is a perfect matching between $\{1,\dots,n\}$ and $\{n+1,\dots,2n\}$. Since all the graphs are disconnected, each transition matrix has spectral gap equals to $0$, and eigenvalue bounds are, in this case, useless to analyse convergence to stationarity. On the other hand, it is quite clear that a lazy random walk on $\mathcal{G}$ mixes in $\Theta(\log{n})$ time.

A more precise way to study mixing in time-inhomogeneous random walks would be to consider the spectral gap of the product of the transition matrices $P^{(1)} \cdots P^{(t)}$. Unfortunately, spectral bounds for the product of matrices are notoriously hard to come by. What is significantly easier is to study the \emph{average} transition matrix $\overline{P} =\frac{1}{t}\left(P^{(1)} + P^{(2)} + \cdots + P^{(t)}\right)$, which at least does not dependent on the order in which the transition matrices appear. For this reason, in this section we give bounds on mixing on $\mathcal{G}$ that depend on the Dirichlet form of $\overline{P}$. In particular, consider the aforementioned example where $G^{(t)}$ is two disjoint expanders at odd times, and a perfect matching between the two sets at even times. Consider the average transition matrix $\overline{P} =\frac{1}{2}\left(P^{(\ell)} + P^{(\ell + 1)}\right)$ for any two consecutive steps $\ell,\ell + 1$: $\overline{P}$ is just the transition matrix of a random walk on an expander graph defined over the entire set of vertices. Our results, then, make us easily derive the correct bound $\tmix(\mathcal{G}) = O(\log{n})$.% upper bound for mixing on $\mathcal{G}$.

Throughout this section we assume that $\mathcal{G} = \{G^{(t)}\}_{t=1}^{\infty}$ is a sequence of undirected graphs over a vertex set $V$ with $|V| = n$. The graphs are not necessarily connected, which means they might have multiple stationary distributions. We require, however, that there exists a time-independent distribution $\pi$ which is a stationary distribution for all the graphs in $\mathcal{G}$. Fixing a time interval $[t_1,t_2]$, we consider $\overline{P} =\frac{1}{t_2-t_1}\left(P^{(t_1)}  + \cdots + P^{(t_2)}\right)$.  We consider time intervals for which $\overline{P}$ is irreducible. Notice that since the $P^{(i)}$'s are strongly aperiodic and reversible with respect to $\pi$, so is $\overline{P}$. Therefore, we can always assume that $\overline{P}$ is ergodic and has a unique stationary distribution $\pi$, unlike the individual $P^{(i)}$'s.

%Given an initial probability distribution $p^{(0)}$ with likelihood ratio $\rho^{(0)} = p^{(0)}/\pi$, our goal is to relate the rate of convergence to $\pi$, which is given by $\var_{\pi}{\rho^{(t)}} - \var_{\pi}{\rho^{(0)}}$, to the Dirichlet form $\mathcal{E}_{\overline{P}}(\rho^{(0)},\rho^{(0)}) = \frac{1}{t} \sum_{i=1}^t \mathcal{E}_{P^{(i)}}(\rho^{(0)},\rho^{(0)})$. The intuition behind our proofs is as follows: suppose $\var_{\pi}{\rho^{(t)}} - \var_{\pi}{\rho^{(0)}}$ is large. Then, there must be a time-step $1 \le i \le t$ such that $\var_{\pi}{\rho^{(i-1)}} - \var_{\pi}{\rho^{(i)}} = \mathcal{E}_{P^{(i)}}(\rho^{(i-1)},\rho^{(i-1)})$ is relatively large as well. What we want to do is to relate this quantity to the corresponding quantity in $\mathcal{E}_{\overline{P}}(\rho^{(0)},\rho^{(0)})$, i.e., $\mathcal{E}_{P^{(i)}}(\rho^{(0)},\rho^{(0)})$. In \thmref{average} we argue that if  $\rho^{(i-1)}$ is close to $\rho^{(0)}$, then $\mathcal{E}_{P^{(i)}}(\rho^{(0)},\rho^{(0)}) \approx \mathcal{E}_{P^{(i)}}(\rho^{(i-1)},\rho^{(i-1)})$, which is what we want. If instead $\rho^{(i-1)}$ is far from $\rho^{(0)}$, then in \lemref{imp} we prove that $\sum_{j=0}^i \mathcal{E}_{P^{(j)}}(\rho^{(j-1)},\rho^{(j-1)})$ must be relatively large. In \thmref{average} we use this fact to conclude that $\var_{\pi}{\rho^{(t)}} - \var_{\pi}{\rho^{(0)}}$ must be proportional to $\mathcal{E}_{\overline{P}}(\rho^{(0)},\rho^{(0)})$.

For simplicity, we assume in our proofs that each graph in $\mathcal{G}$ has the same number of edges $m$. Our results, however, also hold for sequences of graphs with different edges densities.
% with time-independent stationary distribution $\pi$, and $m$ edges. We write $u \sim_i v$ if $u$ is connected to $v$ n $G^{(1)}$.

Notice that, by the detailed balance condition, if $u \sim_i v$ for some step $i$, $\pi(u) / \pi(v) = d_v / d_u$, where $d_u$ and $d_v$ are, respectively, the (time-independent) degrees of $u$ and $v$\footnote{it may happen that $u$ is isolated in some round $i$, leading to $u$ having degree $0$ in that round. However, in that case, $u$ can be safely ignored when computing $\calE_{P^{(i)}}$. Hence, because the stationary distribution is always the same and so is the number of edges, we may assume that the degree of $u$ is always $d_u$}. In particular, this means there exists some $\alpha_u \ge 0$, which is independent from $t$, such that $\pi(u) = \alpha_u d_u/2m$ and $\pi(v) = \alpha_u d_v/2m$.

\begin{lemma}
\label{lem:imp}
Let $p^{(0)}$ be an arbitrary initial probability distribution, and $\rho^{(0)} = p^{(0)}/\pi$. Suppose that for some $t \ge 1$ and $u \in V$, $| \rho^{(t)}(u) - \rho^{(0)}(u) | \ge \eps > 0$. Then,
\[
\var_{\pi}{\rho^{(0)}} - \var_{\pi}{\rho^{(t)}} \ge \frac{\alpha_u}{4m} \sum_{i=1}^t \sum_{v \sim_i u} \left( \rho^{(i-1)}(u) - \rho^{(i-1)}(v) \right)^2
		\ge \frac{2\eps^2 \pi(u)}{t}.
\]
\end{lemma}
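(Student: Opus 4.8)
The first inequality is the time-inhomogeneous analogue of the telescoping identity~\eqref{eq:mihai}: summing $\var_\pi \rho^{(i-1)} - \var_\pi \rho^{(i)} \ge \mathcal{E}_{P^{(i)}}(\rho^{(i-1)},\rho^{(i-1)})$ over $i=1,\dots,t$ gives $\var_\pi \rho^{(0)} - \var_\pi \rho^{(t)} \ge \sum_{i=1}^t \mathcal{E}_{P^{(i)}}(\rho^{(i-1)},\rho^{(i-1)})$, and then I would expand each Dirichlet form using the lazy-walk formula $\mathcal{E}_{P^{(i)}}(f,f) = \frac{1}{4m}\sum_{x\sim_i y}(f(x)-f(y))^2$ and simply discard all terms except those edges incident to the fixed vertex $u$; since $\pi(u) = \alpha_u d_u/2m$, the factor $\frac{1}{4m}$ becomes $\frac{\alpha_u}{4m}\cdot\frac{1}{d_u}\cdot d_u$ — more precisely, keeping only edges at $u$ and writing $\pi(u)P^{(i)}(u,v) = \pi(u)\cdot\frac{1}{2d_u} = \frac{\alpha_u}{4m}$ for each neighbour $v\sim_i u$ recovers exactly the stated middle expression $\frac{\alpha_u}{4m}\sum_{i=1}^t\sum_{v\sim_i u}(\rho^{(i-1)}(u)-\rho^{(i-1)}(v))^2$.

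For the second inequality, the key observation is that $\rho^{(i)}(u) = (P^{(i)}\rho^{(i-1)})(u)$ — this is exactly the identity derived in the preliminaries, $P^{(t)}\cdots P^{(1)}\rho^{(0)}(u) = \rho^{(t)}(u)$, applied one step at a time using reversibility. Hence $\rho^{(i)}(u) - \rho^{(i-1)}(u) = (P^{(i)}\rho^{(i-1)})(u) - \rho^{(i-1)}(u) = \sum_{v} P^{(i)}(u,v)(\rho^{(i-1)}(v) - \rho^{(i-1)}(u))$, and the laziness of $P^{(i)}$ kills the $v=u$ term. By Cauchy--Schwarz, $\bigl(\rho^{(i)}(u)-\rho^{(i-1)}(u)\bigr)^2 \le \bigl(\sum_{v\sim_i u} P^{(i)}(u,v)\bigr)\cdot\bigl(\sum_{v\sim_i u} P^{(i)}(u,v)(\rho^{(i-1)}(v)-\rho^{(i-1)}(u))^2\bigr) \le \frac12\sum_{v\sim_i u}\frac{1}{2d_u}(\rho^{(i-1)}(u)-\rho^{(i-1)}(v))^2$, since $\sum_{v\sim_i u}P^{(i)}(u,v) \le 1/2$. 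Rearranging, $\sum_{v\sim_i u}(\rho^{(i-1)}(u)-\rho^{(i-1)}(v))^2 \ge 4d_u\bigl(\rho^{(i)}(u)-\rho^{(i-1)}(u)\bigr)^2$, so the middle quantity is at least $\frac{\alpha_u}{4m}\cdot 4d_u\sum_{i=1}^t\bigl(\rho^{(i)}(u)-\rho^{(i-1)}(u)\bigr)^2 = 2\pi(u)\sum_{i=1}^t\bigl(\rho^{(i)}(u)-\rho^{(i-1)}(u)\bigr)^2$, using $\pi(u) = \alpha_u d_u/2m$.

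It remains to lower bound $\sum_{i=1}^t \bigl(\rho^{(i)}(u)-\rho^{(i-1)}(u)\bigr)^2$ given that the total displacement $\bigl|\rho^{(t)}(u)-\rho^{(0)}(u)\bigr| = \bigl|\sum_{i=1}^t (\rho^{(i)}(u)-\rho^{(i-1)}(u))\bigr| \ge \eps$. By Cauchy--Schwarz (or power-mean), $\eps^2 \le \bigl(\sum_{i=1}^t |\rho^{(i)}(u)-\rho^{(i-1)}(u)|\bigr)^2 \le t\sum_{i=1}^t\bigl(\rho^{(i)}(u)-\rho^{(i-1)}(u)\bigr)^2$, i.e.\ $\sum_{i=1}^t\bigl(\rho^{(i)}(u)-\rho^{(i-1)}(u)\bigr)^2 \ge \eps^2/t$. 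Combining, the middle expression is at least $2\pi(u)\cdot\eps^2/t$, which is the claimed bound. The only mildly delicate point — and the place I'd be most careful — is tracking the laziness factor ($\sum_{v\sim_i u}P^{(i)}(u,v)\le 1/2$) correctly through the Cauchy--Schwarz step, since getting the constant on the telescoping side and on the displacement side to line up to exactly $2\eps^2\pi(u)/t$ depends on it; but the isolated-vertex footnote handles the degenerate rounds, so no genuine obstruction remains.
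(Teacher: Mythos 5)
Your proof is correct and follows essentially the same route as the paper's: lower-bound the variance decrease via the telescoped identity~\eqref{eq:mihai} by the Dirichlet contributions of edges at $u$, identify $\rho^{(i)}(u)-\rho^{(i-1)}(u)$ as a weighted mean of neighbour differences, and apply Cauchy--Schwarz twice --- once over neighbours within a step, once over the $t$ steps. The paper organizes the algebra slightly differently (it tracks $|p^{(i)}(u)-p^{(i-1)}(u)|$ and squares after summing over $i$, rather than squaring each step first), but the inequalities and constants used are the same, and your handling of the laziness factor $\sum_{v\sim_i u}P^{(i)}(u,v)\le 1/2$ is the right way to make the constant $2\eps^2\pi(u)/t$ come out.
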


We are now able to relate %\NOTE{T: Don't we need to add the subscript $\pi$ here?} 
$\var_{\pi} \rho^{(0)}  - \var_{\pi} \rho^{(t)}$ to $\mathcal{E}_{\overline{P}}(\rho^{(0)},\rho^{(0)})$. The proof of the next theorem works roughly as follows. We divide the vertices in two classes: $U$ contains all the vertices for which there exists an $1 \le i \le t-1$ such that $\rho^{(i)}(u)$ differs \emph{significantly} from $\rho^{(0)}(u)$, while $V \setminus U$ contains the rest. We then use \lemref{imp} to lower bound the contribution given by vertices in $U$ to $\var_{\pi} \rho^{(0)}  - \var_{\pi} \rho^{(t)}$. Since for $u \not\in U$, $\rho^{(i)}(u)$ has not changed much from $\rho^{(0)}(u)$, we can instead directly lower bound its contribution to $\var_{\pi} \rho^{(0)}  - \var_{\pi} \rho^{(t)}$ just looking at its contribution to $\mathcal{E}_{\overline{P}}(\rho^{(0)},\rho^{(0)})$.

\begin{theorem}
\label{thm:average}
Given a time interval of length $t$ labelled $[1,t]$, let $\overline{P} =\frac{1}{t}(P^{(1)} + P^{(2)} + \cdots + P^{(t)})$ with spectral gap $\lambda(\overline{P})$. Then, for any initial probability distribution $p^{(0)}$ with likelihood $\rho^{(0)} = p^{(0)}/\pi$, it holds that
\[
	\var_{\pi} \rho^{(0)}  - \var_{\pi} \rho^{(t)} \ge \frac{1}{15t} \mathcal{E}_{\overline{P}}(\rho^{(0)},\rho^{(0)}) \ge \frac{\lambda(\overline{P})}{15t}.
\]
\end{theorem}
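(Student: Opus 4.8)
The plan is to split the vertex set according to whether the likelihood ratio $\rho^{(i)}(u)$ stays close to $\rho^{(0)}(u)$ throughout the whole window, and then bound the contribution of each group to $\var_\pi \rho^{(0)} - \var_\pi \rho^{(t)}$ separately. Concretely, for a threshold $\delta$ to be chosen later, let
\[
U = \Bigl\{ u \in V : \text{there exists } 1 \le i \le t-1 \text{ with } |\rho^{(i)}(u) - \rho^{(0)}(u)| \ge \delta \Bigr\},
\]
and let $\bar U = V \setminus U$. The target lower bound is $\tfrac{1}{15t}\mathcal{E}_{\overline P}(\rho^{(0)},\rho^{(0)})$, and since $\mathcal{E}_{\overline P}(\rho^{(0)},\rho^{(0)}) = \tfrac{1}{4m}\sum_{u} \alpha_u \sum_{v} (\text{edge term, averaged over the window})$ — more precisely $\mathcal{E}_{\overline P}(f,f) = \tfrac{1}{t}\sum_{i=1}^t \mathcal{E}_{P^{(i)}}(f,f)$ by linearity of the Dirichlet form in $P$ — I would split that sum as $\mathcal{E}_{\overline P} = \mathcal{E}_{\overline P}^{U} + \mathcal{E}_{\overline P}^{\bar U}$, where the first part collects all edge terms $(\rho^{(0)}(u)-\rho^{(0)}(v))^2$ with at least one endpoint in $U$ and the second collects the (at most) $\mathcal{E}_{\overline P}$-worth of terms with both endpoints in $\bar U$; then it suffices to dominate each of these two pieces by $\tfrac{15}{2}\bigl(\var_\pi\rho^{(0)} - \var_\pi\rho^{(t)}\bigr)$ up to the $1/t$ factor.

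For the vertices in $U$: by definition each $u \in U$ satisfies the hypothesis of \lemref{imp} with $\eps = \delta$, so
\[
\var_\pi \rho^{(0)} - \var_\pi \rho^{(t)} \ge \frac{\alpha_u}{4m} \sum_{i=1}^t \sum_{v\sim_i u} (\rho^{(i-1)}(u) - \rho^{(i-1)}(v))^2 \ge \frac{2\delta^2 \pi(u)}{t}
\]
for each such $u$. Since \lemref{imp} actually gives the stronger pointwise statement — the full telescoped sum of squared edge-increments at $u$ over the window is a lower bound on $\var_\pi\rho^{(0)} - \var_\pi\rho^{(t)}$ — I would reconstruct the edge terms $(\rho^{(0)}(u)-\rho^{(0)}(v))^2$ appearing in $\mathcal{E}_{\overline P}^U$ from the window-sums appearing there, using that for $i=1$ the increment is exactly $(\rho^{(0)}(u)-\rho^{(0)}(v))^2$ and controlling the difference between $\rho^{(0)}$ and $\rho^{(i-1)}$ using the second-moment bound. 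The cleaner route, which I would actually follow, is: sum the bound of \lemref{imp} over $u \in U$, noting that each edge is counted at most twice, to get $\var_\pi\rho^{(0)} - \var_\pi\rho^{(t)} \gtrsim \tfrac{1}{t}\sum_{u\in U}\tfrac{\alpha_u}{4m}\sum_i\sum_{v\sim_i u}(\rho^{(i-1)}(u)-\rho^{(i-1)}(v))^2$, and then relate the inner window-sum at step-level to the $\overline P$-Dirichlet term at $\rho^{(0)}$ by absorbing the drift $|\rho^{(i-1)}-\rho^{(0)}| < \delta$ on $\bar U$ and just paying a constant on $U$.

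For the vertices in $\bar U$: here $\rho^{(i)}(u)$ never moves by more than $\delta$ from $\rho^{(0)}(u)$ for any $i \le t-1$, so in particular $\rho^{(t-1)}(u)$ is within $\delta$ of $\rho^{(0)}(u)$; applying one more step of $P^{(t)}$ and using \eq{mihai} (the single-step variance drop dominates the Dirichlet form $\mathcal{E}_{P^{(t)}}(\rho^{(t-1)},\rho^{(t-1)})$), together with the fact that $\rho^{(t-1)}$ restricted to $\bar U$ agrees with $\rho^{(0)}$ up to $\delta$, lets me write the $\bar U$-part of $\mathcal{E}_{\overline P}(\rho^{(0)},\rho^{(0)})$ as at most $\mathcal{E}_{\overline P}(\rho^{(t-1)},\rho^{(t-1)}) + O(\delta^2 \cdot \text{stuff})$ and then bound $\mathcal{E}_{\overline P}(\rho^{(t-1)},\rho^{(t-1)}) = \tfrac1t\sum_i \mathcal{E}_{P^{(i)}}(\rho^{(t-1)},\rho^{(t-1)})$ — but this last point needs care, since \eq{mihai} only directly controls $\mathcal{E}_{P^{(i+1)}}(\rho^{(i)},\rho^{(i)})$, not $\mathcal{E}_{P^{(i)}}(\rho^{(t-1)},\cdot)$. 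The honest fix is: on $\bar U$, $\rho^{(i-1)}(u) \approx \rho^{(0)}(u)$ for all $i$, so the step-$i$ Dirichlet term of $\rho^{(i-1)}$ over $\bar U$-edges is itself within $O(\delta^2)$ (times the number of edges, times weights) of the step-$i$ Dirichlet term of $\rho^{(0)}$; summing the telescoping bound of \lemref{imp}-type reasoning (or directly \eq{mihai}) over $i=1,\dots,t$ then gives $\var_\pi\rho^{(0)} - \var_\pi\rho^{(t)} \gtrsim \tfrac1t\bigl(\mathcal{E}_{\overline P}^{\bar U}(\rho^{(0)},\rho^{(0)}) - O(t\delta^2 \cdot C)\bigr)$ for an appropriate constant $C$ depending on edge count and weights. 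Finally, choosing $\delta$ to be a small constant multiple of $\sqrt{\var_\pi\rho^{(0)} - \var_\pi\rho^{(t)}}$ — which is legitimate since $\delta$ only enters through comparisons — makes the error term $O(t\delta^2)$ absorbable into a constant fraction of $\var_\pi\rho^{(0)}-\var_\pi\rho^{(t)}$ itself, yielding $\var_\pi\rho^{(0)} - \var_\pi\rho^{(t)} \ge \tfrac{1}{15t}\mathcal{E}_{\overline P}(\rho^{(0)},\rho^{(0)})$; the second inequality $\mathcal{E}_{\overline P}(\rho^{(0)},\rho^{(0)}) \ge \lambda(\overline P)$ is immediate from the definition of the spectral gap together with $\var_\pi \rho^{(0)} \ge \var_\pi\rho^{(0)} - \var_\pi\rho^{(t)}$ — wait, more carefully, one uses $\mathcal{E}_{\overline P}(\rho^{(0)},\rho^{(0)}) \ge \lambda(\overline P)\var_\pi\rho^{(0)}$ and then the trivial bound $\var_\pi\rho^{(0)}\ge 1$ is false in general, so instead the stated chain should read $\ge \tfrac{\lambda(\overline P)}{15t}\var_\pi\rho^{(0)}$ under the normalization in force, or one simply carries $\var_\pi\rho^{(0)}$ through; I would present it as $\mathcal{E}_{\overline P}(\rho^{(0)},\rho^{(0)}) \ge \lambda(\overline P)\,\var_\pi\rho^{(0)}$ and note that the theorem as displayed implicitly assumes $\var_\pi\rho^{(0)}\ge 1$, which is the only regime of interest when bounding mixing time.

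The main obstacle I anticipate is the bookkeeping in the $\bar U$ case: converting the step-by-step Dirichlet bounds (which are naturally expressed at $\rho^{(i-1)}$ for the matrix $P^{(i)}$) into a clean bound at the fixed function $\rho^{(0)}$ for the average matrix $\overline P$, while keeping the accumulated drift error $O(t\delta^2)$ genuinely smaller than a constant fraction of $\var_\pi\rho^{(0)}-\var_\pi\rho^{(t)}$. This is exactly where the choice $\delta \asymp \sqrt{\var_\pi\rho^{(0)}-\var_\pi\rho^{(t)}}$ is forced, and making the three error terms (the $U$-edge double-counting, the $\bar U$-drift, and the single-step slack in \eq{mihai}) simultaneously fit under the constant $1/15$ is the delicate part; everything else is linearity of $\mathcal{E}_\bullet(f,f)$ in the transition matrix plus \lemref{imp} applied termwise.
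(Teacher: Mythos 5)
Your overall template matches the paper's: split $V$ into a set $U$ of ``drifting'' vertices (where $\rho^{(i)}(u)$ moves away from $\rho^{(0)}(u)$) and its complement, use the telescoped Dirichlet-form inequality to handle the $U$-side via \lemref{imp}, and absorb the drift error on $\bar U$ via an elementary square inequality. However, there is a genuine gap in the way you define $U$: you use a single global threshold $\delta$, whereas the paper's proof hinges on a \emph{vertex-dependent} threshold, namely $U = \{u : \exists i,\ |\rho^{(i)}(u) - \rho^{(0)}(u)| > \sqrt{\mathcal{E}_u/30}\}$ where $\mathcal{E}_u \propto \frac{1}{t d_u}\sum_i\sum_{v\sim_i u}(\rho^{(0)}(u)-\rho^{(0)}(v))^2$ is the per-vertex contribution to $\mathcal{E}_{\overline{P}}(\rho^{(0)},\rho^{(0)})$. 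This is not a cosmetic choice: for $u\in U$ the bound coming out of \lemref{imp} is $\tfrac{\alpha_u}{4m}\sum_i\sum_{v\sim_i u}(\rho^{(i-1)}(u)-\rho^{(i-1)}(v))^2 \ge \tfrac{2\pi(u)\eps_u^2}{t}$, and with a global $\eps_u = \delta$ this is $\tfrac{2\pi(u)\delta^2}{t}$, which is unrelated to $\mathcal{E}_u\pi(u)$ --- a vertex with $\mathcal{E}_u \gg \delta^2$ that barely crosses the threshold would contribute far less than its share of $\tfrac{1}{t}\mathcal{E}_{\overline{P}}^U$. Conversely, on $\bar U$ the drift error per vertex is $O(\delta^2)$ rather than $O(\mathcal{E}_u)$, so it cannot be absorbed vertex-by-vertex into its own Dirichlet term. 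Your proposed fix of setting $\delta \asymp \sqrt{\var_\pi\rho^{(0)}-\var_\pi\rho^{(t)}}$ at the end does not resolve this tension: either $\delta^2$ is large enough to make the $\bar U$-error small compared to (a $1/t$ fraction of) the target, in which case the $U$-side lower bound $\tfrac{2\delta^2\pi(U)}{t}$ is still not comparable to $\tfrac{1}{t}\mathcal{E}_{\overline{P}}^U$ when $U$ has small $\pi$-measure but large Dirichlet energy, or $\delta^2$ is too small and the drift term cannot be absorbed. The vertex-adaptive threshold $\sqrt{\mathcal{E}_u/30}$ makes both halves close \emph{pointwise}: it turns \lemref{imp} into exactly $\tfrac{1}{15t}\mathcal{E}_u\pi(u)$ per vertex in $U$, and simultaneously bounds the $\bar U$-side drift term $(\rho^{(0)}(u)-\rho^{(i-1)}(u))^2$ by $\mathcal{E}_u/30$, which after the $(a+b+c)^2 \ge a^2/2 - 6b^2 - 6c^2$ inequality is a constant fraction of that vertex's own $\rho^{(0)}$-Dirichlet contribution, with no $\delta$ to tune.

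Your side remark about the spectral-gap step is fair: literally, $\mathcal{E}_{\overline{P}}(\rho^{(0)},\rho^{(0)}) \ge \lambda(\overline{P})\,\var_\pi\rho^{(0)}$, and dropping the factor $\var_\pi\rho^{(0)}$ implicitly assumes it is at least $1$, i.e., the regime before mixing; the paper's proof has the same soft spot, so this is a comment on the statement rather than a defect of your argument.
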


We remark we do not know if the dependency of $t$ in the bound of $\thmref{average}$ (which appears as a result of an application of the Cauchy-Schwarz inequality) is tight, or even if any dependency on $t$ is needed at all. %We notice that such factor of $t$ appears as a result of an application of the Cauchy-Schwarz inequality in \lemref{imp}. It seems unlikely such factor can be saved without significantly changing the outline of the proof. We do not rule out the case, however, that a different definition of the set $U$ in the proof of \thmref{average} could help decrease such dependency in the final result.

From \thmref{average} it is easy to derive the following corollary:
\begin{corollary}
\label{cor:average}
Given a lazy random walk on a sequence $\calG$ of graphs with transition matrices $\{P^{(i)}\}_{i=1}^{\infty}$ such that (1) there exists $\pi$ which is a stationary distribution for any $P^{(i)}$; (2) a time-window $t \ge 0$ such that, for any $i \ge 0$, $\overline{P}^{[i,i+t]} =\frac{1}{t}(P^{(i)} + P^{(i+1)} + \cdots + P^{(i+t)})$ is ergodic and has spectral gap $\lambda\bigl(\overline{P}^{[i,i+t]}\bigr) \ge \lambda > 0$. Then, $
\tmix(\calG) = O\bigl(\frac{t^2\log(1/\pi_*)}{\lambda}\bigr)$.
\end{corollary}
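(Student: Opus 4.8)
The plan is to iterate \thmref{average} over consecutive time windows of length $t$ to obtain a geometric decay of $\var_\pi \rho^{(\cdot)}$, then translate this into a bound on the $\ell_2$ mixing time. First I would partition the time axis into consecutive blocks $[kt, (k+1)t]$ for $k = 0, 1, 2, \dots$, and apply \thmref{average} to each block, with $\overline{P}^{[kt,(k+1)t]}$ playing the role of $\overline{P}$ and $\rho^{(kt)}$ playing the role of $\rho^{(0)}$ (note \thmref{average} is stated for the interval labelled $[1,t]$, but by time-homogeneity of the statement — it only refers to the transition matrices inside the window — the same inequality holds for any window of length $t$, so I would invoke it with a shifted time index). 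By assumption (2), each such average matrix has spectral gap at least $\lambda$, so \thmref{average} gives
\[
\var_\pi \rho^{(kt)} - \var_\pi \rho^{((k+1)t)} \ge \frac{\lambda(\overline{P}^{[kt,(k+1)t]})}{15 t} \cdot \var_\pi \rho^{(kt)} \cdot \frac{1}{\lambda(\overline{P}^{[kt,(k+1)t]})} \cdot \mathcal{E}_{\overline{P}^{[kt,(k+1)t]}}(\rho^{(kt)},\rho^{(kt)}) / \mathcal{E} \dots
\]
— more cleanly: by the variational definition of the spectral gap, $\mathcal{E}_{\overline{P}^{[kt,(k+1)t]}}(\rho^{(kt)},\rho^{(kt)}) \ge \lambda \cdot \var_\pi \rho^{(kt)}$, so \thmref{average} yields $\var_\pi \rho^{(kt)} - \var_\pi \rho^{((k+1)t)} \ge \frac{\lambda}{15t}\var_\pi \rho^{(kt)}$, i.e.
\[
\var_\pi \rho^{((k+1)t)} \le \Bigl(1 - \frac{\lambda}{15t}\Bigr) \var_\pi \rho^{(kt)}.
\]

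Iterating this recursion gives $\var_\pi \rho^{(kt)} \le \bigl(1 - \frac{\lambda}{15t}\bigr)^k \var_\pi \rho^{(0)}$. It remains to bound the initial variance: starting from a point mass at $u$, $\var_\pi \rho^{(0)} = \Ev_\pi (\rho^{(0)})^2 - 1 \le 1/\pi(u) - 1 \le 1/\pi_*$. To reach $\var_\pi \rho^{(kt)} \le 1/9$ (so that $\|\rho^{(kt)} - 1\|_{2,\pi} \le 1/3$) it therefore suffices that $\bigl(1-\frac{\lambda}{15t}\bigr)^k \le \pi_*/9$, i.e. $k \ge \frac{15t}{\lambda}\ln(9/\pi_*)$ using $1-x \le e^{-x}$. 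Multiplying by the block length $t$, the mixing time is at most $kt = O\bigl(\frac{t^2 \log(1/\pi_*)}{\lambda}\bigr)$, as claimed. (One should also note that $\var_\pi \rho^{(s)}$ is nonincreasing in $s$ by \eq{mihai}, so controlling it at the block endpoints $s = kt$ controls it for all larger $s$ as well, which is what licenses reading off $\tmix$ from the block-endpoint bound.)

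The only slightly delicate point is making sure \thmref{average} can legitimately be applied block-by-block: the theorem as stated fixes the window to be $[1,t]$, but its proof only uses the transition matrices within that window and the fact that $\pi$ is stationary for each of them, both of which hold verbatim for any shifted window under hypothesis (1)–(2); so this is a matter of restating \thmref{average} with a generic window $[i, i+t]$ rather than a genuine obstacle. Everything else is the routine geometric-decay-to-mixing-time conversion. I expect the write-up to be short — essentially the displayed recursion, its iteration, the initial-variance estimate $1/\pi_*$, and the $1-x\le e^{-x}$ step.
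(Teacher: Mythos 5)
Your proof is correct and is exactly the argument the authors intend; the paper itself only remarks that the corollary ``is easy to derive'' from \thmref{average} without writing it out. The one thing worth flagging is that you correctly chain the \emph{first} inequality of \thmref{average} with the variational definition $\mathcal{E}_{\overline{P}}(f,f)\ge\lambda\,\var_\pi f$ to get a \emph{multiplicative} contraction $\var_\pi\rho^{((k+1)t)}\le(1-\lambda/(15t))\var_\pi\rho^{(kt)}$, which is essential: naively plugging in only the theorem's final (additive) bound $\ge\lambda/(15t)$ would give the much weaker $O(t^2/(\lambda\pi_*))$.
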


To highlight the applicability of Corollary \ref{cor:average}, consider a sequence of connected graphs $\mathcal{G}$ with time-independent stationary distribution $\pi$ in which, for any interval of $t$ consecutive steps and subset of vertices $A$, there exists a transition matrix $P^{(i)}$ of a graph in the interval such that $\Phi_{P^{(i)}}(A) \ge \phi$. Then, $\Phi\left(\overline{P}\right) \ge \phi/t$ and $\lambda\left(\overline{P}\right) \ge \phi^2/t^2$. Hence, Corollary \ref{cor:average} gives us that $\tmix(\mathcal{G}) = O(t^3 \log{n}/ \phi^2 )$.

Another natural question is whether our condition on the stationary distribution being fixed could be relaxed. This question is answered negatively by the following result:

\begin{proposition}\label{pro:nomixing}
For any $t=\omega(\log n)$, there is a sequence of connected $n$-vertex bounded-degree expander graphs $\calG =\{G^{(i)}\}_{i=1}^{\infty}$ and a constant $c > 0$ so that $p_{u,v}^{(t)} \geq n^{-1+c}$ for some vertices $u$ and $v$.
%two vertices $u$ and $v$ such that, for some constant $c_1 \geq 0$ and any time-step $t \geq c_1 \cdot \log n$,  $p_{u,v}^{[0,t]} \geq n^{-1+c_2}$ for some constant $c_2=c_2(c_1)>0$. 
\end{proposition}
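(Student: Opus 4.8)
The plan is to exhibit a concrete sequence of bounded-degree expanders on $n$ vertices whose individual stationary distributions vary over time in such a way that, even after $t = \omega(\log n)$ steps, the distribution of the walk is far from uniform at some pair of vertices. The key idea is to build the sequence so that, roughly, the walk is ``trapped'' near a vertex of abnormally small stationary mass for a long stretch of time, so that its $t$-step distribution there is polynomially larger than $1/n$. Concretely, I would fix a distinguished vertex $v$ and design $G^{(i)}$ to be an expander in which $v$ has much smaller degree (or is connected by ``slow'' bottleneck edges) than in the preceding step, so that the detailed balance condition for $\pi^{(i)}$ forces $\pi^{(i)}(v)$ to be polynomially small; by arranging things so that the chain started at $v$ cannot escape from a small neighbourhood of $v$ within $t$ steps with more than a $1-n^{-c}$ factor of its mass redistributed, I get $p_{v,v}^{(t)} \ge n^{-1+c}$.

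The construction I have in mind is as follows. Take a bounded-degree expander $H$ on roughly $n$ vertices, single out $v$, and for each time step $i$ replace $v$ together with a few of its neighbours by a small ``gadget'' — for instance, a path or a low-degree tree of length $\Theta(\log n)$ hanging off the rest of the graph, with a single bottleneck edge connecting the gadget to the bulk expander. Such a graph is still a connected bounded-degree expander in the $\Theta(1)$-spectral-gap sense only if the gadget is short, so instead I would make the gadget contribute an $n^{-\Omega(1)}$ but still $\mathrm{poly}(1/n)$-bounded stationary mass at $v$ by giving the bottleneck edge a structure that, via detailed balance, forces $\pi(v) \le n^{-c}$; crucially, since all $G^{(i)}$ can share the \emph{same} gadget, the expansion of each $G^{(i)}$ is $\Omega(1)$ and the degrees are $O(1)$, yet the walk started at $v$ leaks mass out of the gadget only at rate $O(1/\mathrm{poly}(n))$ per step because the bottleneck has small conductance relative to $\pi(v)$. (If one wants the sequence genuinely to be dynamic, one can cyclically permute which of several equivalent gadgets is attached, so the $G^{(i)}$ genuinely differ while all sharing the property that $\pi(v)$ is small.) After $t = \omega(\log n)$ steps — indeed for any polynomially bounded $t$ — a constant fraction of the mass remains in the gadget, and since the gadget has $O(\log n)$ vertices, at least one of them, call it $u$, carries mass $\gtrsim 1/(\text{gadget size}) \gg n^{-1+c}$.

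Thus the steps in order are: (1) define the gadget and the graphs $G^{(i)}$, verifying $O(1)$ degree and $\Omega(1)$ spectral gap for each individual graph via Cheeger — the bottleneck edge is fine as long as the gadget side has $\Omega(1)$-fraction-normalised conductance, which one ensures by keeping $\pi$(gadget) itself polynomially small so the $\min$ in the conductance definition is taken on the gadget side and the edge boundary is comparable; (2) compute $\pi$ and check $\pi_* = n^{-\Theta(1)}$, in particular $\pi(v)$ and $\pi(u)$ are $n^{-\Theta(1)}$; (3) show that starting from $v$ (hence $p^{(0)} = \mathbf{1}_v$), the total probability that the walk has left the gadget by time $t$ is at most $t$ times the per-step escape probability, which is $O(\mathrm{poly}(n)^{-1} \cdot t) = o(1)$ for the relevant range of $t$, using a simple union bound over the (single) bottleneck edge and the laziness of the walk; (4) conclude that $\sum_{x \in \text{gadget}} p_{v,x}^{(t)} = 1 - o(1)$, so by pigeonhole some $x$ has $p_{v,x}^{(t)} \ge (1-o(1))/|\text{gadget}| \ge n^{-1+c}$ for a suitable $c > 0$.

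The main obstacle is reconciling two tensions in the construction: we need each $G^{(i)}$ to be a genuine $\Omega(1)$-spectral-gap expander (so a single global bottleneck with tiny conductance is forbidden), yet we need the walk started at $v$ to be unable to ``spread out'' to near-uniformity within $\omega(\log n)$ steps (which on a true expander it would do in $O(\log n)$ steps). The resolution — and the delicate point to get right — is that ``expander'' here refers to the \emph{normalised} conductance $\Phi_P(A) = \pi(\partial A)/\min\{\pi(A),\pi(A^c)\}$, so a small set carrying exponentially-or-polynomially small stationary mass can have a thin edge boundary and still satisfy $\Phi_P(A) = \Omega(1)$; the walk's slow escape is then not a contradiction to the $\Omega(1)$ spectral gap because the $\ell_2(\pi)$-geometry is badly distorted when $\pi_*$ is tiny, and the mixing time in $\ell_\infty$ or even $\ell_2$ genuinely is $\Omega(\log(1/\pi_*)/\lambda) = \omega(\log n)$. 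Making this quantitatively airtight — choosing the gadget size and bottleneck multiplicities so that $\pi(v) = n^{-\Theta(1)}$, $\Phi(P^{(i)}) = \Omega(1)$, and the escape probability per step is $o(1/t)$ simultaneously — is the technical heart of the argument, but it is a matter of balancing a small number of polynomial exponents rather than any deep difficulty.
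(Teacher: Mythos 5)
Your construction cannot work, and the reason is structural rather than a matter of balancing exponents. In an \emph{unweighted bounded-degree} graph $G^{(i)}$, the stationary distribution of a (lazy) random walk is forced to be degree-proportional: $\pi^{(i)}(v) = d_v/(2m) = \Theta(1/n)$ for \emph{every} vertex $v$, because $d_v = O(1)$ and $m = \Theta(n)$. There is simply no way, via detailed balance or anything else, to make $\pi(v) \le n^{-c}$ for $c > 1$ in such a graph; that would require either a polynomially large degree somewhere (contradicting bounded degree) or edge weights (which the model doesn't allow). Consequently your gadget of size $\Theta(\log n)$, attached to the bulk expander by a single bottleneck edge, has $\pi(\text{gadget}) = \Theta(\log n / n)$ and edge-boundary probability flow $\Theta(1/n)$, giving conductance $\Phi_P(\text{gadget}) = \Theta(1/\log n)$, not $\Omega(1)$. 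So each $G^{(i)}$ would \emph{not} be a constant-spectral-gap expander, and the construction violates the hypothesis of the proposition it is meant to prove. The ``resolution'' you propose in your last paragraph --- that a set of tiny normalised stationary mass can have a thin boundary while keeping $\Phi = \Omega(1)$ --- requires $\pi(A) = O(1/m)$, i.e.\ a constant-size gadget, and from a constant-size set the walk escapes in $O(1)$ steps.

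There is a second, more conceptual, problem: your construction is essentially \emph{static} (you even remark that the graphs need only be ``cyclically permuted copies'' of one another). But for a single fixed bounded-degree graph with $\Omega(1)$ spectral gap and $\pi_* = \Theta(1/n)$, the walk mixes to within any fixed $\ell_\infty$ distance of $\pi$ in $O(\log(1/\pi_*)/\lambda) = O(\log n)$ steps --- so $p_{u,v}^{(t)} = \Theta(1/n)$ for any $t = \omega(\log n)$, which is the opposite of what you need. The entire point of the proposition is that the time-\emph{inhomogeneity} makes things break, and you have not used it. The paper's proof uses it essentially: each $G^{(i)}$ is a different non-regular bounded-degree expander with its own degree sequence, hence its own stationary measure, and the graphs are designed so that each new $G^{(i)}$ ``funnels'' probability from a set $S_{i-1}$ into a set $S_i$ that is ten times smaller while carrying $10/8$ times more mass per vertex (the funnelling edges make the $S_i$-vertices have higher degree, so they absorb more mass in a single lazy step). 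Iterating for $\Theta(\log n)$ rounds concentrates a factor $(10/8)^{\Theta(\log n)} = n^{\Theta(1)}$ excess probability at a single vertex; padding the front of the sequence with $3$-regular expanders that preserve the uniform distribution then extends the statement to any $t = \omega(\log n)$. The counterexample has nothing to do with trapping the walk in a low-conductance pocket; it is the moving target $\pi^{(i)}$ that defeats mixing.
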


In \secref{worsthit} and \secref{twopluseps} we have shown that the behaviour of a lazy random walk on a sequence of \emph{connected} graphs with the same stationary distribution is comparable to the behaviour of random walks on static graphs, at least regarding mixing and hitting times. When the graphs are disconnected, however, the behaviour of random walks on dynamic graphs becomes more complicated. \thmref{average} shows that, if every $t$ steps the average of the transition matrices applied in those steps is irreducible and strongly aperiodic with stationary distribution $\pi$, then the random walk will converge to $\pi$. However, $\pi$ can be highly imbalanced and, as a result, mixing and hitting can be exponential in $t$ and the number of vertices $n$. The next lemma shows an example of this behaviour.

\begin{proposition}
\label{pro:nohitting}
There is a sequence of $n$-vertex  bounded-degree graphs $\calG =\{G^{(i)}\}_{i=1}^{\infty}$ with transition matrices $\{P^{(i)}\}_{i=1}^{\infty}$  and a probability distribution $\pi$ such that (1) for any $i$, $\pi$ is stationary for $P^{(i)}$; (2) the average transition matrix $\overline{P}$ of any $4n$ consecutive steps is ergodic; (3) for any $t \ge 0$ there are two vertices $u,v$ such that $p_{u,v}^{[0,t]} \leq 2^{-(n/4)-2}$. Moreover, $\tmix(\calG) = O(\poly(n))$, while $\thit(\calG) = 2^{\Omega(n)}$. There is also a sequence $\calG'$ satisfying (1), (2), and (3) such that $\tmix(\calG) = 2^{\Omega(n)}$.
\end{proposition}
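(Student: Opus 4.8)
The plan is to realise, via a highly periodic sequence of \emph{disconnected} (near-matching) graphs, a strongly biased walk on a line whose stationary distribution decays geometrically---this is the ``simulating a directed graph by disconnected bipartite graphs'' idea alluded to above. For the first sequence $\calG$ I would fix $K=\lfloor (n-1)/2\rfloor$, designate a backbone path $v_0,v_1,\dots,v_K$, and use the remaining vertices as pendant ``dummies''. The sequence is $2$-periodic: in even rounds the active edges are $\{v_{2j},v_{2j+1}\}$, each carrying an extra dummy attached to $v_{2j+1}$ (so $\deg(v_{2j})=1$, $\deg(v_{2j+1})=2$); in odd rounds the active edges are $\{v_{2j+1},v_{2j+2}\}$, each with a dummy attached to $v_{2j+2}$; every other vertex is isolated that round. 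The detailed balance condition applied along each active edge forces $\pi(v_{j+1})=2\pi(v_j)$, so after normalisation $\pi(v_j)\asymp 2^{j-K}$; in particular $\pi(v_0)\asymp 2^{-K}\le 2^{-n/4-2}$ for $n$ large and $\pi(v_K)=\Theta(1)$, and the dummy masses are likewise pinned down by detailed balance. This gives property~(1), all graphs have maximum degree $2$, and for~(2) the union of the two period-graphs is the path $v_0\cdots v_K$ with its pendants, which is connected, so $\overline{P}^{[i,i+4n]}$ is irreducible for every $i$, and aperiodic since the walk is lazy.

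For property~(3) and the hitting-time bound I would use a time-reversal identity: telescoping the detailed balance condition $\pi(x)P^{(i)}(x,y)=\pi(y)P^{(i)}(y,x)$ along a length-$t$ path yields $\pi(u)\,p^{[0,t]}_{u,v}=\pi(v)\,(P^{(t)}P^{(t-1)}\cdots P^{(1)})_{v,u}\le\pi(v)$, hence $p^{[0,t]}_{u,v}\le\pi(v)/\pi(u)$ for every $t$. Taking $u=v_K$, $v=v_0$ gives $p^{[0,t]}_{v_K,v_0}\le\pi(v_0)/\pi(v_K)=O(2^{-K})\le 2^{-n/4-2}$, which is~(3); and a union bound $\sum_{s=0}^{T}p^{[0,s]}_{v_K,v_0}\le (T+1)2^{-n/4-2}$ shows that the walk started at $v_K$ reaches $v_0$ within $T=2^{n/8}$ steps only with probability $o(1)$, so $\thit(\calG)\ge\tau_{v_K,v_0}\ge\tfrac12 2^{n/8}=2^{\Omega(n)}$. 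For $\tmix(\calG)$ I would apply \corref{average} with window $t=4n$: since $\calG$ is $2$-periodic, each window-average $\overline{P}^{[i,i+4n]}$ is a convex combination of the two period-graphs' transition matrices with both weights $\Theta(1)$, $\pi$-reversible and ergodic, and its conductance is $\Omega(1)$---for the strongly biased line the $\pi$-mass leaving a prefix $\{v_0,\dots,v_k\}$ across the single cut edge $\{v_k,v_{k+1}\}$ is $\asymp\pi(v_k)$, which already dominates $\pi(\{v_0,\dots,v_k\})\asymp\pi(v_k)$, and pendants cannot create a sparser cut. By Cheeger $\lambda(\overline{P}^{[i,i+4n]})=\Omega(1)$, and since $\log(1/\pi_*)=O(n)$, \corref{average} yields $\tmix(\calG)=O(n^3)=O(\poly(n))$.

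For the second sequence $\calG'$ I would use the same device to build a ``valley'': a bridge $w_0,w_1,\dots,w_{2\ell}$ with $\ell=\Theta(n)$ along which, by placing the dummy on the appropriate endpoint of each active edge, $\pi$ halves from $w_0$ down to $w_\ell$ and then doubles from $w_\ell$ up to $w_{2\ell}$, so that $\pi(w_0)=\pi(w_{2\ell})=\Theta(1)$ and $\pi(w_\ell)=\Theta(2^{-\ell})$. To respect the $n$-vertex budget one reuses each down-slope dummy as an up-slope dummy, the pairing $(\text{dummy on }w_k)\leftrightarrow(\text{dummy on }w_{2\ell-k})$ being consistent with detailed balance since the two have equal $\pi$-value and are active in different rounds, so the valley uses only $\approx 3\ell\le n$ vertices (with $\ell$ chosen slightly above $n/4$). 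Properties~(1) and~(2) follow as before, and~(3) follows from $p^{[0,t]}_{u,v}\le\pi(v)/\pi(u)$ with $v=w_\ell$, $u=w_0$. For the mixing lower bound let $S$ be the component of $w_0$ after deleting the single cut edge $\{w_{\ell-1},w_\ell\}$, with $\pi(S)\ge\tfrac12$ after possibly swapping $S$ and $\bar S$. The $\pi$-flow across this cut in any single round is $\le\tfrac32\pi(w_\ell)=\Theta(2^{-\ell})$; since $\|\rho^{(0)}\|_\infty=O(1)$ when the walk starts at a mass-$\Theta(1)$ vertex of $\bar S$ and $\|\rho^{(t)}\|_\infty$ is non-increasing, the estimate $p^{(t)}(S)\le p^{(0)}(S)+O(1)\sum_{i\le t}Q^{(i)}(\bar S,S)=O(t\,2^{-\ell})$ follows by induction and detailed balance; for $t\le 2^{\ell/2}$ this is $o(1)$, so $\|p^{(t)}-\pi\|_1\ge 2|\pi(S)-p^{(t)}(S)|=2(\tfrac12-o(1))$, and since $\|\rho^{(t)}-1\|_{2,\pi}\ge\|p^{(t)}-\pi\|_1$ by Cauchy--Schwarz we get $\tmix(\calG')\ge 2^{\ell/2}=2^{\Omega(n)}$.

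I expect the main obstacle to be arranging that $\calG$ has \emph{polynomial} mixing time while simultaneously having \emph{exponential} hitting time: both are governed by the same exponentially imbalanced $\pi$, so the backbone must be designed so that, although $\pi$ ranges over exponentially many scales, no cut is sparse---equivalently, the strong drift towards the heavy end must keep every conductance bounded below by an absolute constant. The second delicate point is purely bookkeeping: realising \emph{exactly} the prescribed geometric $\pi$ with bounded-degree \emph{undirected} graphs while keeping the period-union connected and the total vertex count at most $n$ (hence the dummy-reuse in $\calG'$), and checking, as in the footnote of \secref{average}, that vertices isolated in some round cause no problem.
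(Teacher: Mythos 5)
Your proof is correct and reaches the same conclusion as the paper, but via a genuinely different construction and with a somewhat cleaner route for property~(3) and the hitting-time lower bound. The paper partitions $V$ into $n/4$ buckets of size~$4$ and, in each round, places a single complete bipartite graph $K_{2,4}$ between two vertices of one bucket and all four of the next; it cycles through all $\binom{4}{2}=6$ such bipartite graphs before moving on to the next pair of buckets, and then reverses the sweep, giving a period of $\Theta(n)$. You instead build a $2$-periodic sequence: a backbone path with pendant dummies, where the degree asymmetry ($1$ versus $2$) at the two endpoints of each active edge pins $\pi$ down to a geometric ramp via detailed balance. Both constructions realise the same mechanism---asymmetric bipartite steps force a geometric stationary distribution and thereby simulate a drift---so the high-level idea is shared, but your period is $O(1)$ rather than $\Theta(n)$, which streamlines the verification of condition~(2). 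For the hitting time and property~(3), the paper just says the $2^{\Omega(n)}$ bound ``follows trivially''; you make this explicit through the telescoped detailed-balance identity $\pi(u)\,p^{[0,t]}_{u,v}=\pi(v)\bigl(P^{(t)}\cdots P^{(1)}\bigr)_{v,u}\le\pi(v)$, a nice general observation for any time-inhomogeneous chain that is reversible with respect to a common $\pi$. For the polynomial mixing time of $\calG$ and for $\calG'$ both proofs coincide in spirit: lower-bound the conductance of the window-averaged matrix (constant for $\calG$; exponentially small for $\calG'$ because the two halves are glued at the light end---your ``valley'' is the same idea as the paper's two copies of $V$ joined by a matching at the light bucket). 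One small caveat that applies equally to both arguments: the proof of \lemref{imp} as written in the paper assumes, via the $\alpha_u$ bookkeeping, that each vertex has a time-independent degree except when isolated, but in both the paper's $K_{2,4}$ construction and your pendant construction the degree of some backbone vertices alternates between two non-zero values across rounds. For bounded degree this only changes constants (one can absorb a factor of the maximum degree wherever $d_u^{(i)}$ is used), so the appeal to \corref{average} is harmless, but it is worth noting since you invoke it as a black box.
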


\section{Bounds in terms of average edge connectivity}\label{sec:cutsets}

Recall that in Section~\ref{sec:twopluseps} we proved several bounds which hold for graphs with sufficient expansion for small sets of vertices. Following a different direction, we now derive bounds on commute times for random walks on $d$-regular \emph{static} graphs based on \emph{average} connectivity measures (see the end of Section $2$ for some basic relations between the commute time and hitting time). We assume $G = (V,E)$ is a connected, undirected and static graph with vertex set $V = \{1,\dots,n\}$ and $m$ edges. We denote with $P$ the transition matrix of a lazy random walk on $G$ and $\pi$ its stationary distribution. Given $A, B$, the probability flow between $A$ and $B$ is defined as $\sum_{u \in A} \sum_{v \in B} \pi(u) P(u,v)$. The edge boundary of $A$, denoted with $\partial A$, is the set of edges with one endpoint in $A$ and one in $V \setminus A$. For ease of notation we define $[i] = \{1,\dots,i\}$. Also recall that we denote with $C_{st}$ the expected commute time between $s$ and $t$.  We will use the following variational characterisation of the average commute time (see Aldous and Fill, \cite[Theorem 3.36]{aldousFill}):
\begin{equation}
\label{eq:var_hit}
C_{st} = \max_{g \colon V \to \R} \{ 1/\calE_P(g,g) \colon 0 \le g \le 1, g(s) = 0, g(t) = 1 \}.
\end{equation}

\begin{lemma}\label{lem:general}
For any graph $G = (V,E)$ and $s,t \in V$, there exists a labelling of the vertices from $1$ to $n$ such that
\[
C_{st} \le 2m \sum_{j=1}^{n-1}\frac{1}{|\partial [j]|}.
\]
Furthermore, by considering the reversal of the labelling, we can also conclude that
$
C_{st} \le 4m \sum_{j=1}^{n/2} \frac{1}{|\partial [j]|}.
$
\end{lemma}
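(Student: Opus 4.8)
The plan is to obtain the bound from the variational characterisation \eqref{eq:var_hit}, taking the labelling to be the level‑set ordering of the \emph{extremal test function} for $s$ and $t$. Let $g^\ast\colon V\to[0,1]$ attain the maximum in \eqref{eq:var_hit}, so that $g^\ast(s)=0$, $g^\ast(t)=1$, and $C_{st}=1/\mathcal{E}_P(g^\ast,g^\ast)$ (a maximiser exists because the feasible set $\{g\colon 0\le g\le 1,\ g(s)=0,\ g(t)=1\}$ is compact and $\mathcal{E}_P(g,g)$ is continuous and, since $G$ is connected and $g^\ast(s)\neq g^\ast(t)$, strictly positive on it). I then take $v_1,\dots,v_n$ to be any ordering with $g^\ast(v_1)\le g^\ast(v_2)\le\cdots\le g^\ast(v_n)$, resolving ties so that $v_1=s$ and $v_n=t$. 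Because $G$ is connected, $|\partial[j]|\ge 1$ for every $1\le j\le n-1$ (each such $\partial[j]$ is in fact an $s$--$t$ cut), so the right‑hand side is finite.

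The crux is a lower bound on $\sum_{u\sim v}\bigl(g^\ast(u)-g^\ast(v)\bigr)^2=4m\,\mathcal{E}_P(g^\ast,g^\ast)$. Set $\delta_i:=g^\ast(v_{i+1})-g^\ast(v_i)\ge 0$ for $1\le i\le n-1$, so $\sum_{i=1}^{n-1}\delta_i=g^\ast(v_n)-g^\ast(v_1)=1$. An edge $\{v_a,v_b\}$ with $a<b$ lies in $\partial[i]$ exactly for $a\le i\le b-1$, and, using that all the $\delta_i$ are nonnegative,
\[
\bigl(g^\ast(v_b)-g^\ast(v_a)\bigr)^2=\Bigl(\sum_{i=a}^{b-1}\delta_i\Bigr)^{2}\ \ge\ \sum_{i=a}^{b-1}\delta_i^{2}.
\]
Summing over all edges and exchanging the order of summation gives $\sum_{u\sim v}\bigl(g^\ast(u)-g^\ast(v)\bigr)^2\ \ge\ \sum_{i=1}^{n-1}\delta_i^{2}\,|\partial[i]|$. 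A single Cauchy--Schwarz step, $1=\bigl(\sum_i\delta_i\bigr)^2\le\bigl(\sum_i\delta_i^{2}|\partial[i]|\bigr)\bigl(\sum_i|\partial[i]|^{-1}\bigr)$, then yields $\sum_{u\sim v}\bigl(g^\ast(u)-g^\ast(v)\bigr)^2\ge\bigl(\sum_{j=1}^{n-1}|\partial[j]|^{-1}\bigr)^{-1}$. Substituting back into $C_{st}=1/\mathcal{E}_P(g^\ast,g^\ast)$ and using $\mathcal{E}_P(g,g)=\tfrac{1}{4m}\sum_{u\sim v}(g(u)-g(v))^2$ gives the asserted inequality $C_{st}\le 2m\sum_{j=1}^{n-1}|\partial[j]|^{-1}$ (with the precise leading constant depending only on the normalisation adopted for $\mathcal{E}_P$).

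For the ``furthermore'', reversing a labelling $v_1,\dots,v_n$ to $v_n,\dots,v_1$ turns $\partial[j]$ into $\partial[n-j]$, so the two orderings give the same value of $\sum_{j=1}^{n-1}|\partial[j]|^{-1}$; splitting this sum at $\lfloor n/2\rfloor$, one of the two halves is at least half of the total, and applying the main bound to whichever of the labelling or its reversal places that half in the range $1\le j\le\lfloor n/2\rfloor$ gives $C_{st}\le 4m\sum_{j=1}^{\lfloor n/2\rfloor}|\partial[j]|^{-1}$.

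The only genuinely non‑routine step is the choice of labelling: ordering the vertices along the extremal function $g^\ast$ is exactly what collapses the two‑dimensional edge sum $\sum_{u\sim v}(g^\ast(u)-g^\ast(v))^2$ into the one‑dimensional layer sum $\sum_i\delta_i^{2}|\partial[i]|$ over the nested cuts $\partial[1]\subseteq$-ish data, to which Cauchy--Schwarz then applies cleanly. Everything else (the telescoping inequality, Cauchy--Schwarz, the reversal trick) is routine; one need only take a little care over ties in $g^\ast$ and over the trivial case $s=t$.
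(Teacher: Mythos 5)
Your proof is correct and follows essentially the same route as the paper's: both take the extremal test function $g^\ast$ from the variational characterisation of $C_{st}$, order the vertices by its level sets, use the telescoping inequality $\bigl(\sum_{i=a}^{b-1}\delta_i\bigr)^2\ge\sum_{i=a}^{b-1}\delta_i^2$ valid for nonnegative $\delta_i$ to push the edge sum through the nested cuts $\partial[j]$, and close with Cauchy--Schwarz. You make explicit a few things the paper leaves implicit — that the maximiser exists and must be used (not an arbitrary feasible $g$), and the argument for the factor-of-two improvement via the reversal — but the underlying mechanism is identical.
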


Note that the well-known Nash-Williams's inequality \cite[Proposition 9.15]{levinPeres} gives a very similar lower bound: it states that for every set of edge-disjoint cutsets separating $s$ from $t$, $\{E_1,E_2,\ldots,E_k\}$,
$C_{st} \geq 2m \sum_{j=1}^{k} \frac{1}{|E_j|}$. Note that in our upper bound however, the cutsets $\partial[j]$ are in general not edge-disjoint.

Another interesting consequence can be obtained by expressing the above upper bound in terms of some variant of conductance profile. To this end, assume $G$ is $d$-regular and for any $1 \leq k \leq n/2$, let $\Phi_{k} = \min_{|S| \subseteq V, |S|=k} \frac{|E(S, V \setminus S|}{d \cdot |S|}$. Then,
\[
 C_{st} \leq 4 nd \sum_{j=1}^{n/2} \frac{1}{\Phi_k \cdot d j} = 4n \sum_{j=1}^{n/2} \frac{1}{\Phi_j \cdot j} \leq 4n \log n \cdot \frac{1}{\Phi}.
\]
Also let us note that if we replace by $C_{st}$ the maximum commute time, and use the Random Target Lemma~\cite{aldousFill} as a lower bound, we obtain the inequality
\begin{align}
 \sum_{k=2}^n \frac{1}{1-\lambda_k} \leq 4n \sum_{j=1}^{n/2} \frac{1}{\Phi_j \cdot j}. \label{eq:interesting}
\end{align}
Note that for the graph $K_{n/2} \times K_2$, $\Phi_j \sim \frac{n/2-j}{n/2}$, and the right hand side is $O(n \log n)$, whereas the right hand side is $\Omega(n)$.
Hence this inequality is almost tight for certain graphs, and could be seen as some ``average version'' of Cheeger's inequality.

\begin{remark}
As we will prove (see appendix, \lemref{connected}), the lemma above holds even for a labelling such that the subgraph induced by $[i]$ is connected for every $1 \leq i \leq n$.
\end{remark}

\subsection{Commute times and edge-connectivity}
We now apply \lemref{general} to obtain a bound on commute times that depends on the edge-connectivity of the graph, improving a result by Aldous and Fill \cite[Proposition~6.22]{aldousFill}.

%Clearly $\Delta_j$ is maximized by the following greedy-strategy. At each step $i \in \{j+1,\ldots,j+\epsilon \cdot d\}$ let $i$ be a vertex $v \in \partial(1,\ldots,i)$ that has the largest number of neighbors in $\{ 1,\ldots,i \}$. Furthemore, any vertex $i \in \{j+1,\ldots,j+\epsilon \cdot d \}$ with $i \not\in \partial\{1,\ldots,j\}$ will be in class $2$. Note that any vertex $k \in \{j+1,\ldots,j+\epsilon d\}$ with $k \not\in\delta(\{1,\ldots,j\})$ will have at least $(3/4)d - \epsilon d \geq d/2$ neighbors in $\{k+1,k+2,\ldots,n\}$, provided $\epsilon > 0$ is sufficiently small.

%In particular, all vertices $\{j+1,\ldots,j+\epsilon \cdot d\}$ will be in the set $\partial\{1,\ldots,j\}$.
\begin{lemma}\label{lem:edgeconn}
Let $G=(V,E)$ be any graph with minimum degree $\delta$ so that any subset $S \subseteq V$ with $1 \leq |S| \leq n-1$ satisfies $|\partial S| \geq \rho$ (in other words, $G$ has edge-connectivity at least $\rho$). Then we have
\[
 \sum_{i=1}^{n-1} \frac{1}{|\partial[i]|} = O(n/\delta^2 \cdot \log \delta + n/(\rho \delta)).
\]
\end{lemma}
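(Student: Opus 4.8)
The plan is to prove the displayed bound for an \emph{arbitrary} labelling of $V$ by $1,\dots,n$, so that it applies in particular to whatever labelling \lemref{general} produces; as in the paper write $[i]=\{1,\dots,i\}$. Only two elementary inequalities are needed. First, since every vertex has degree at least $\delta$ and any $S\subseteq V$ induces at most $\binom{|S|}{2}$ edges,
\[
|\partial S|\ \ge\ \delta|S|-2\binom{|S|}{2}\ =\ |S|(\delta-|S|+1)\ \ge\ \tfrac12\,\delta|S| \qquad (|S|\le\delta/2).
\]
Second, a cut-counting (submodularity) inequality: for $1\le j<j'<n$, any edge of $\partial\{j+1,\dots,j'\}$ has its other endpoint in $[j]$ or in $\{j'+1,\dots,n\}$, hence lies in $\partial[j]$ or in $\partial[j']$; therefore $|\partial\{j+1,\dots,j'\}|\le |\partial[j]|+|\partial[j']|$.

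The key step is: for every $T$ with $\rho\le T\le\delta^{2}/16$, the set $I_T=\{\,i\in\{1,\dots,n-1\}:|\partial[i]|\le T\,\}$ has $|I_T|=O\bigl(nT/\delta^{2}+n/\delta\bigr)$. Indeed, if $j<j'$ both lie in $I_T$ with $j'-j\le\delta/2$, then applying the first inequality to the set $\{j+1,\dots,j'\}$ (of size $\le\delta/2$) and then the second,
\[
\tfrac12\,\delta(j'-j)\ \le\ |\partial\{j+1,\dots,j'\}|\ \le\ |\partial[j]|+|\partial[j']|\ \le\ 2T ,
\]
so $j'-j\le 4T/\delta\ (\le\delta/4)$. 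Thus no two elements of $I_T$ lie at distance in the interval $(4T/\delta,\,\delta/2]$. Group $I_T$ into clusters by connecting two of its elements whenever they are at distance $\le 4T/\delta$ and taking connected components: every cluster has diameter $\le 4T/\delta$ — otherwise, stepping through it in jumps of size $\le 4T/\delta<\delta/2$ we would reach an element at distance in $(4T/\delta,\delta/2]$ from the first — and distinct clusters are more than $\delta/2$ apart. Hence there are $O(n/\delta)$ clusters, each of size at most $4T/\delta+1$, giving $|I_T|=O(nT/\delta^{2}+n/\delta)$.

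Finally, sum dyadically. Since $|\partial[i]|\ge\rho$ for every $i$, grouping indices by $\lfloor\log_2|\partial[i]|\rfloor$ and writing $N_T=|I_T|$,
\[
\sum_{i=1}^{n-1}\frac{1}{|\partial[i]|}\ \le\ \sum_{k\,:\,2^{k+1}>\rho}\frac{N_{2^{k+1}}}{2^{k}}\ \le\ \sum_{\substack{k\,:\,2^{k+1}>\rho\\ 2^{k+1}\le\delta^{2}/16}}\frac{O\!\bigl(n2^{k}/\delta^{2}+n/\delta\bigr)}{2^{k}}\ +\ \sum_{k\,:\,2^{k+1}>\delta^{2}/16}\frac{n}{2^{k}} .
\]
The first sum ranges over $O(\log(\delta^{2}/\rho))=O(\log\delta)$ values of $k$, so its $n/\delta^{2}$-terms total $O(n\log\delta/\delta^{2})$, while its $n/(\delta2^{k})$-terms form a geometric series (essentially starting at $2^{k}\asymp\rho$) summing to $O(n/(\rho\delta))$; the remaining sum is $O(n/\delta^{2})$. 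Altogether $\sum_{i=1}^{n-1}|\partial[i]|^{-1}=O\bigl(n\log\delta/\delta^{2}+n/(\rho\delta)\bigr)$.

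I do not anticipate a serious obstacle: everything hinges on the single observation that the minimum-degree lower bound on cuts of \emph{small} sets, together with the trivial cut-counting inequality, already confines the prefix-cut profile $i\mapsto|\partial[i]|$ to spend only $O(nT/\delta^{2}+n/\delta)$ indices below any level $T\lesssim\delta^{2}$; once this clustering of small-cut indices is established the summation is routine bookkeeping. (The alternative of explicitly constructing a favourable, e.g.\ connected-prefix, labelling and tracking its cuts greedily also works, but is more delicate, since induced subgraphs need not inherit the minimum-degree or edge-connectivity hypotheses; proving the estimate for all labellings at once avoids this.)
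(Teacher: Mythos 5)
Your proof is correct, and it is a genuinely different argument from the paper's. Both proofs exploit the same physical fact---that the minimum-degree constraint prevents the prefix cut $|\partial[i]|$ from being small for many consecutive $i$---but they package it in very different ways. The paper works \emph{locally and amortised}: it fixes a window $\mathcal{I}=\{j+1,\dots,j+\epsilon\delta\}$, splits those vertices into two classes according to whether they have many neighbours behind $[j]$ or ahead of it, and then shows $\sum_{i\in\mathcal{I}}|\partial[i]|^{-1}=O(\log\delta/\delta+1/\rho)$ per window, with the edge-connectivity $\rho$ entering at a single boundary index where the cut can be momentarily small. Your approach is \emph{global and level-set based}: you (a) show via the degree-sum lower bound $|\partial S|\ge\delta|S|-2\binom{|S|}{2}$ plus the interval-cut inequality $|\partial\{j{+}1,\dots,j'\}|\le|\partial[j]|+|\partial[j']|$ that the sublevel set $I_T=\{i:|\partial[i]|\le T\}$, for $T\le\delta^2/16$, decomposes into $O(n/\delta)$ clusters each of width $O(T/\delta)$, hence $|I_T|=O(nT/\delta^2+n/\delta)$; and (b) sum dyadically over $T$, using $\rho$ only to trim the sum from below. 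Your route is arguably cleaner and more modular---the level-set bound on $|I_T|$ is a self-contained statement about prefix cuts that makes no reference to the order within a window, and the dyadic bookkeeping afterwards is mechanical---whereas the paper's window argument gives a slightly more direct view of where the $\log\delta$ and $1/\rho$ come from locally. I checked the details (the clustering claim needs $8T/\delta\le\delta/2$, which is exactly your assumption $T\le\delta^2/16$; cluster separation $>\delta/2$ gives $O(n/\delta)$ clusters; and the tail sum for $2^{k+1}>\delta^2/16$ is $O(n/\delta^2)$, subsumed by the $n\log\delta/\delta^2$ term) and they all hold. One cosmetic note: your second inequality is most naturally seen as nothing more than an observation about interval cuts (each boundary edge of $\{j+1,\dots,j'\}$ is also a boundary edge of $[j]$ or of $[j']$), rather than an instance of submodularity, though of course it does follow from submodularity of $A\mapsto|\partial A|$ by taking $A=[j']$, $B=V\setminus[j]$.
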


\begin{theorem}\label{thm:connectivity}
Let $G=(V,E)$ be any graph with minimum degree $\delta$, average degree $\overline{d}$ and edge-connectivity $\rho$. Then any commute time is bounded by $O(n^2 \overline{d} \cdot ( \frac{\log \delta}{\delta^2} + \frac{1}{\delta \rho} ) )$.
\end{theorem}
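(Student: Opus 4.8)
The plan is to derive \thmref{connectivity} by composing the two preceding lemmas: \lemref{general} converts a commute time into a sum $2m\sum_{j}|\partial[j]|^{-1}$ over edge boundaries of prefixes of a suitable vertex labelling, and \lemref{edgeconn} bounds that sum under the edge-connectivity hypothesis. Combining these with the elementary identity $2m=\overline{d}\,n$ (where $m=|E|$ and $\overline{d}$ is the average degree, so that $\sum_v d_v = 2m = \overline{d}\,n$) gives exactly the claimed estimate. Note that since $G$ has edge-connectivity $\rho\ge 1$ it is connected, hence a lazy random walk on $G$ is ergodic and the variational characterisation \eqref{eq:var_hit} of the commute time, and therefore \lemref{general}, is available.

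Concretely, I would fix an arbitrary pair of vertices $s,t\in V$. By \lemref{general}, there is a labelling $1,\dots,n$ of $V$ such that $C_{st}\le 2m\sum_{j=1}^{n-1}|\partial[j]|^{-1}$; by the Remark following \lemref{general} this labelling may in addition be chosen so that the subgraph induced by $[i]$ is connected for every $1\le i\le n$, which is the regime in which \lemref{edgeconn} is applied. Since $G$ has minimum degree $\delta$ and every nonempty proper $S\subseteq V$ satisfies $|\partial S|\ge\rho$, \lemref{edgeconn} yields
\[
\sum_{j=1}^{n-1}\frac{1}{|\partial[j]|}=O\!\left(\frac{n\log\delta}{\delta^2}+\frac{n}{\rho\delta}\right).
\]
Multiplying through by $2m=\overline{d}\,n$ gives $C_{st}=O\!\bigl(n^{2}\overline{d}\,(\log\delta/\delta^{2}+1/(\delta\rho))\bigr)$, and since $s,t$ were arbitrary this bounds every commute time of $G$, which is \thmref{connectivity}.

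At the level of \thmref{connectivity} itself there is therefore essentially nothing to do beyond this bookkeeping; the one point needing care is that the labelling produced by \lemref{general} must be of the connected-prefix type demanded by \lemref{edgeconn}, and this is precisely what the Remark supplies. The genuine content sits inside \lemref{edgeconn}, and I expect that to be where any difficulty lies: to bound $\sum_{j}|\partial[j]|^{-1}$ one splits the indices into the ``small'' prefixes $j\le\delta$, for which every vertex of $[j]$ has at most $j-1$ neighbours inside $[j]$ and hence $|\partial[j]|\ge j(\delta-j+1)$, contributing $\sum_{j\le\delta}\frac{1}{j(\delta-j+1)}=O(\log\delta/\delta)$; the symmetric ``large'' prefixes $j\ge n-\delta$; and the ``middle'' prefixes $\delta<j<n-\delta$, for which the crude bound $|\partial[j]|\ge\rho$ only gives $O(n/\rho)$, so one must instead exploit the connectivity of the prefixes to argue that $|\partial[j]|$ cannot linger near $\rho$ for more than an $O(1/\delta)$-fraction of the indices, thereby improving the middle contribution to $O(n/(\rho\delta))$. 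That middle-regime refinement is the real work; the passage from there to \thmref{connectivity} is immediate.
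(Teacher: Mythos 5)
Your proof is correct and matches the paper's own proof of \thmref{connectivity} exactly: compose \lemref{general} with \lemref{edgeconn} and use $2m=\overline{d}\,n$. One minor note: \lemref{edgeconn} applies to any labelling (its amortised argument is universal over orderings), so your appeal to the connected-prefix Remark is harmless but unnecessary.
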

\begin{proof}
Simply combining \lemref{edgeconn} with~\lemref{general} yields the statement of the theorem.
\end{proof}

We remark that Aldous and Fill~\cite[Proposition~6.22]{nonrevFill}
proved that for any graph $G$ with average degree $\overline{d}$ which is $\rho$-edge-connected, the maximum commute time is $O(n^2 \overline{d} \cdot \rho^{-3/2})$. They also mention that if the graph is $\Omega(d)$-edge-connected, they obtain a bound of $O(n^2 \cdot d^{-1/2})$. For this case of maximal edge-connectivity, $\rho=\Theta(d)$, 
our bound is considerably better than the one by Aldous and Fill, and, modulo the $\log d$-factor, gives also the correct dependency on $d$.
Furthermore, since the edge-connectivity $\rho$ satisfies $\rho \leq \delta \leq d$, it is easy to verify that our bound is never worse than the bound in Aldous and Fill. In fact, as soon as $\delta \rightarrow \infty$, our upper bound will be asymptotically smaller than the bound by Aldous and Fill.
%Let us now prove that our simplified bound is at least as good as the bound of Aldous \& Fill. Notice that $d \cdot \rho^{-3/2} \geq 1/ \rho \Leftrightarrow d \geq \rho^{1/2}$, which holds trivially since $\rho \leq d$. Furthermore, $d \cdot \rho^{-3/2} \geq \log d/d \Leftrightarrow d^2/\log d \geq \rho^{3/2}$, which holds again because of $d^{3/2} \geq \rho^{3/2}$. Hence our upper bound is always at least as good as the bound by Aldous \& Fill.

\begin{remark}[Proved in \lemref{optimalconn}]
For any pair of $\rho$ and $d$ there is a graph matching the upper bound in Theorem~\ref{thm:connectivity} up to a factor of $O(\log d)$.
\end{remark}
\newpage

%\NOTE{T: If there is time, we should make the refs more consistent so that all authors have first name (or no one).}

\bibliography{references2}

\appendix

\section{Omitted proofs from \secref{worsthit}}
To prove the results in \secref{worsthit} we will need the following technical lemma:
\begin{lemma}
\label{lem:ballsize}
Let $G$ be a graph with $n$ vertices and minimum degree $\delta$. Then for any integer $1 \le x \le n$, the number of vertices reachable in $x$ hops from any vertex is at least $\min\{\delta \cdot x/3, n\}$.
\end{lemma}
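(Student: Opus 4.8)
Let $G$ be a graph with $n$ vertices and minimum degree $\delta$. Then for any integer $1 \le x \le n$, the number of vertices reachable in $x$ hops from any vertex is at least $\min\{\delta x/3, n\}$.

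Let me think about how to prove this.

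The plan is to run a breadth-first search from the fixed starting vertex $v$ and track how the ball sizes grow. For $r \ge 0$ let $B_r$ be the set of vertices at distance at most $r$ from $v$, let $S_r = B_r \setminus B_{r-1}$ be the sphere of vertices at distance exactly $r$ (with $B_{-1} = \emptyset$), and write $b_r = |B_r|$, $s_r = |S_r|$, extending $b_i = 0$ for $i < 0$. The number of vertices reachable in $x$ hops from $v$ is exactly $b_x$, so the goal is $b_x \ge \min\{\delta x/3,\, n\}$. If $b_x = n$ there is nothing to prove, so I would assume $b_x < n$ throughout. The first observation is that then $S_r \ne \emptyset$ for every $0 \le r \le x$: an empty sphere $S_r$ would mean no vertex lies at distance $\ge r$ from $v$ (a vertex at distance $r+1$ would have a neighbour in $S_r$), forcing $B_r = V$ and hence $b_x \ge b_r = n$, a contradiction.

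The core estimate comes from the minimum-degree hypothesis: for $0 \le r \le x-1$, pick any $u \in S_r$ (nonempty by the above); every neighbour of $u$ is at distance $r-1$, $r$, or $r+1$ from $v$ by the triangle inequality, so the $\ge \delta$ neighbours of $u$ all lie in $(S_{r-1}\cup S_r\cup S_{r+1})\setminus\{u\}$, giving $s_{r-1}+s_r+s_{r+1}\ge \delta$. Rewriting this in terms of ball sizes and reindexing with $j = r+1$, this says
\[
b_j - b_{j-3} \;\ge\; \delta \qquad \text{for all } 1 \le j \le x .
\]
(The boundary cases $j \le 3$ are fine with the convention $b_i = 0$ for $i<0$; e.g.\ for $j=1$ the neighbours of $v$ all lie in $S_1$, so $b_1 = 1+s_1 \ge 1+\delta$.) Now I would telescope along an arithmetic progression of common difference $3$: writing $x = 3q + s$ with $s \in \{1,2,3\}$ (so $q = \lceil x/3\rceil - 1$ and $q+1 \ge x/3$), chaining the displayed inequality at $j = s, s+3, \dots, x$ yields $b_x \ge b_s + q\delta$, and the base term satisfies $b_s \ge \delta$ in each of the three cases. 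Hence $b_x \ge (q+1)\delta \ge \delta x/3$, which combined with the trivial case $b_x = n$ gives the claim.

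I expect the main obstacle to be pinning down the constant exactly $1/3$ rather than a weaker absolute constant. The naive approach of summing $s_{r-1}+s_r+s_{r+1}\ge\delta$ over all admissible $r$ and using that each $s_r$ is counted at most three times only yields $b_x \gtrsim (x-1)\delta/3$, losing an additive $\delta/3$ at the boundary; it is the telescoping restricted to a single residue class mod $3$ that recovers the clean bound and simultaneously deals painlessly with small $x$ (where the chain is short or empty). The remaining care is purely bookkeeping: checking the level-$r$ degree inequality at the endpoints $r=0$ and $r=x-1$, and confirming that all the spheres involved are nonempty once we are in the regime $b_x < n$.
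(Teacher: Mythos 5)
Your proof is correct and follows the same idea as the paper's proof: for each distance $r$, a vertex at distance $r$ together with its $\ge\delta$ neighbours witnesses at least $\delta$ vertices in the three consecutive spheres at distances $r-1$, $r$, $r+1$, and chaining these estimates gives the claim. Your telescoping restricted to a single residue class mod~$3$ is a cleaner piece of bookkeeping than the paper's rather terse ``summing up over $i=0,\dots,x$'' sketch, and it correctly delivers the exact constant $1/3$ without the additive loss you identified.
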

\begin{proof}
The statement is trivially satisfied for $x=1$. For $x > 1$, let $v$ be a vertex at distance $x$ from a vertex $u$. Since $v$ has minimum degree $\delta$, there exist at least $\delta + 1$ vertices at distance $x$, $x-1$, or $x+1$ from  $u$ (unless we can already reach all the vertices in the graphs in $x+1$ steps from $u$). Summing up the number of vertices that we can reach in $i=0,\dots,x$ steps we obtain the statement.
\end{proof}

\begin{lemma}[\lemref{dirichlet} (restated)]
 Let $P$ be the transition matrix of a lazy random walk on a graph $G \in \mathcal{G}$. Given a probability distribution $\sigma: V \to [0,1]$ with likelihood ratio $f = \sigma/\pi$ such that $\var_{\pi} f = \eps  > 0$, 
\[
\mathcal{E}_{P}(f,f) \gtrsim \max\left\{ \frac{\eps^2}{m^* + 1/(\pi_*^2(1+\eps))},  \frac{\pi_* \eps^2}{ n} \right\}.
\]
 \end{lemma}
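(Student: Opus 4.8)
The plan is to prove the two bounds inside the maximum separately. Throughout, write $E = E(G)$, $m = |E| \le m^*$, and recall that for a lazy walk on $G$ we have $\calE_P(f,f) = \frac{1}{4m}\sum_{u \sim v}(f(u)-f(v))^2$. The starting observation in both cases is that $\var_\pi f = \eps$ forces $f$ to vary: since $\Ev_\pi f = 1$ and $\Ev_\pi (f-1)^2 = \eps$, there must be vertices where $f$ is noticeably above or below $1$, and the Dirichlet form measures how ``spread out'' these discrepancies are along edges. The technical device I would use for both bounds is a \emph{co-area / level-set} argument: order the vertices $v_1, v_2, \dots, v_n$ so that $f(v_1) \ge f(v_2) \ge \cdots \ge f(v_n)$, and for each threshold index $j$ let $S_j = \{v_1,\dots,v_j\}$. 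Then a Cauchy--Schwarz step gives
\[
\sum_{u\sim v}(f(u)-f(v))^2 \ \ge\ \frac{\bigl(\sum_{u\sim v}|f(u)-f(v)|\bigr)^2}{\text{(number of edges contributing)}} \ =\ \frac{\bigl(\sum_{j=1}^{n-1} |f(v_j)-f(v_{j+1})|\cdot |\partial S_j|\bigr)^2}{m},
\]
where we used that $\sum_{u\sim v}|f(u)-f(v)| = \sum_j |f(v_j)-f(v_{j+1})| \cdot |\partial S_j|$ (each edge $\{v_a,v_b\}$ with $a<b$ contributes $|f(v_a)-f(v_b)| = \sum_{j=a}^{b-1}|f(v_j)-f(v_{j+1})|$ to the telescoping sum, and this edge is counted in $\partial S_j$ for exactly $j=a,\dots,b-1$).

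For the \emph{second} bound $\calE_P(f,f) \gtrsim \pi_* \eps^2 / n$, I would bound $|\partial S_j| \ge \delta \ge 1$ trivially (or use $|\partial S_j|\ge 1$ by connectedness) and invoke a co-area lower bound together with the isoperimetry coming from \lemref{ballsize}: the key point is that to make $\var_\pi f$ as large as $\eps$ while keeping $\calE_P(f,f)$ small, the ``bad'' level sets must grow, but by \lemref{ballsize} the number of vertices reachable in $x$ hops is $\ge \min\{\delta x/3, n\}$, which controls how quickly $f$ can change along the graph. Concretely, relating $\|f-1\|_{2,\pi}^2 = \eps$ to $\sum_j (f(v_j)-f(v_{j+1}))\cdot(\text{something involving }\pi(S_j))$ via summation by parts, and then using $\pi(S_j) \ge j\pi_*$ along with $|\partial S_j| \ge 1$, should yield $\calE_P(f,f) \gtrsim \pi_* \eps^2/n$ after a Cauchy--Schwarz; the $n$ in the denominator enters because the telescoping sum has $n-1$ terms.

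For the \emph{first} bound, the two regimes of the denominator $m^* + 1/(\pi_*^2(1+\eps))$ reflect a split according to whether $\var_\pi f = \eps$ is ``witnessed'' by moderately many vertices with $f$-value of order $1$, or by a few vertices carrying a huge $f$-value (which happens when $\eps$ is large, forcing $f$ somewhere to be as big as $\approx \eps/\pi(u)$, since $p^{(t)}(u) = f(u)\pi(u) \le 1$ caps $f(u) \le 1/\pi_*$). In the first case one argues as in the second bound but keeping $|\partial S_j|$ in play and bounding $\sum_j |\partial S_j| \le$ (sum over edges of multiplicity) $\le$ roughly $m$ edges counted with the telescoping weights, giving the $m^*$ term via Cauchy--Schwarz. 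In the second case, when a single vertex $u$ has $f(u)$ of order $1/\pi_*$, its incident edges alone contribute $(f(u) - f(\cdot))^2$ which is of order $1/\pi_*^2$; dividing by $4m$ and balancing against the $\eps^2$ scaling gives the $1/(\pi_*^2(1+\eps))$ term. Taking the worse of the two and combining into a single fraction $\eps^2/(m^* + 1/(\pi_*^2(1+\eps)))$ finishes this bound.

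The main obstacle I expect is making the co-area/telescoping estimate genuinely produce the $\eps^2$ (rather than $\eps$) in the numerator while not losing more than a factor of $m^*$ or $n$ in the denominator --- i.e., getting the Cauchy--Schwarz to ``pay'' in exactly the right place. The subtlety is that $\var_\pi f = \eps$ is a \emph{quadratic} constraint on the $f(v_j)$'s, whereas the telescoping identity naturally produces \emph{linear} combinations of consecutive differences; reconciling these (likely via a weighted Cauchy--Schwarz where the weights are chosen proportional to $\pi(S_j)$ or to $|\partial S_j|$) is where the real work lies, and also where the precise form of the denominator gets pinned down. The regime split in the first bound is the delicate bookkeeping step, since one must verify the crossover between the $m^*$ term and the $\pi_*^{-2}$ term is handled by the same argument with no gap.
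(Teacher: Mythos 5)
Your co-area/sweep-cut plan is a genuinely different route from the paper's. The paper localises: it takes $x = \argmax f$, finds the nearest $y$ (in graph distance) with $f(y)\le 1+\eps/2$, applies Cauchy--Schwarz along that \emph{single} path of length $\ell$ to get $\calE_P(f,f)\gtrsim \eps^2/(m\ell)$ (the drop along the path is $\ge \eps/2$ because $f_{\max}\ge \Ev_\pi f^2/\Ev_\pi f = 1+\eps$ while $f(y)\le 1+\eps/2$), and then controls $\ell$ by combining Markov's inequality applied to $\pi(B_{\ell-1}(x))$ with \lemref{ballsize}. Your plan instead telescopes over $f$-sorted level sets and applies a global Cauchy--Schwarz over all $m$ edges --- precisely the technique the paper uses for the \emph{static} commute-time bound in \lemref{general}, but not for this lemma.

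The proposal, as written, has two concrete gaps. First, the inequality $|\partial S_j|\ge\delta$ you invoke for the second bound is simply false for level-set cuts: from connectedness you only get $|\partial S_j|\ge 1$ (and $|\partial S_j|\ge j(\delta-j+1)$ for $j\le\delta$, but nothing like $\delta$ for moderate $j$). Using only $|\partial S_j|\ge 1$ with the sharper, weighted form of Cauchy--Schwarz
\[
\calE_P(f,f)\ \ge\ \frac{1}{4m}\sum_{j=1}^{n-1} \bigl(f(v_j)-f(v_{j+1})\bigr)^2\,|\partial S_j|\ \ge\ \frac{(f_{\max}-f_{\min})^2}{4m\sum_{j} 1/|\partial S_j|}\ \ge\ \frac{\eps^2}{4mn},
\]
you fall short of the claimed $\pi_*\eps^2/n = \Theta(\delta\eps^2/(mn))$ by a factor of $\delta$. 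Second, to recover that missing $\delta$ you point at \lemref{ballsize}, but that lemma concerns metric balls $B_\ell(x)$ in graph distance and says nothing about the $f$-sorted sweep cuts $\partial S_j$; what would actually close the gap is a combinatorial bound of the form $\sum_{j} 1/|\partial S_j| = O(n/\delta)$ for \emph{every} ordering of a minimum-degree-$\delta$ graph (this is, in spirit, the amortisation warm-up inside the proof of \lemref{edgeconn}), and you neither prove nor cite such a fact. The treatment of the first bound is too loose to assess: truncating the sweep at the first index with $f(v_j)\le 1+\eps/2$ and bounding its position by Markov as you suggest yields, with $|\partial S_j|\ge 1$, only $\calE_P(f,f)\gtrsim \eps^2\pi_*(1+\eps)/m$, again short by a factor of $\delta$ relative to the lemma's denominator $m^*+1/(\pi_*^2(1+\eps))$.
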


 \begin{proof}
Denote with $m$ the number of edges of $G$ and with $\delta$ its minimum degree so that $\pi_* = \min_u \{\pi(u)\} = \delta/m$. Let $x \triangleq \argmax_{y} f(y)$, i.e., $\|f\|_{\infty} = f(x)$. Since $\Ev_{\pi} f = 1$ and $\Ev_{\pi} f^2 = 1+ \eps$, there exists at least one vertex $y$ such that $f(y) < 1$. Take the $y$ closest to $x$ (w.r.t. to the shortest path distance in $G$) satisfying $f(y) \le 1+\eps/2$ and let $\ell$ be the distance between $x$ and $y$. By construction, all the vertices  $z$ in $B = B_{\ell-1}(x)$, the ball of radius $\ell-1$ around $x$, satisfy $f(z) > 1+\eps/2$. Therefore, by Markov's inequality, we have that
 \[
 \pi(B) \le \pi\left(\left\{ z \colon f(z) > 1+\eps/2 \right\}\right) \le \frac{\Ev_{\pi} f}{1+\eps/2} = \frac{1}{1+\eps/2}.
 \]
By \lemref{ballsize}, we also have that $|B| \ge \delta \cdot (\ell-1)/3$. Therefore, 
\[
\frac{\delta \cdot (\ell-1)}{3} \cdot \frac{\delta}{m} \le \pi(B) \le \frac{1}{1+\eps/2},
\]
which implies that $\ell \le \min\{3m/(\delta^2 \cdot (1+\eps/2)) + 1, n/(3\delta)\}$. 

Since $x$ and $y$ are at distance $\ell$, there exists a path $x = u_0, u_1, \dots, u_{\ell} =y$ in $G$. Applying the Cauchy-Schwarz inequality and the bound on $\ell$, we obtain:
\begin{align*}
\mathcal{E}_{P}(f,f) &= \frac{1}{4m} \sum_{u \sim v} \left(f(u)-f(v)\right)^2 \nonumber \\
		&\ge \frac{1}{4m} \sum_{i=0}^{\ell-1} \left(f(u_i) - f(u_{i+1})\right)^2 \nonumber \\
		&\ge \frac{1}{4m \cdot \ell} \left(\sum_{i=0}^{\ell-1} \left(f(u_i) - f(u_{i+1})\right)\right)^2 \nonumber \\
		&= \frac{1}{4m \cdot \ell} \left(f(x) - f(y)\right)^2 \nonumber \\
		& \gtrsim \frac{\eps^2}{m \cdot \ell} \\
		&\gtrsim \max\left\{\frac{\eps^2}{m + m^2/(\delta^2(1+\eps))}, \frac{\delta \eps^2}{m \cdot n} \right\} \nonumber \\
		&\gtrsim \max\left\{ \frac{\eps^2}{m^* + 1/(\pi_*^2(1+\eps))},  \frac{\pi_* \eps^2}{ n} \right\} \nonumber.
\end{align*}
 \end{proof}

%%%%%%%%%%%%%%%%%%%%%%%%%%%%%%%%%%%%%%%%%%%%%%
\begin{lemma}[\lemref{inftoell2} (restated)]
Let $t_1 < t_2$. Then, for any $u,v \in V$, it holds that
\[
\left|\rho^{[t_1,t_2]}_{v,u} - 1 \right| \le \max\left\{\var_{\pi} \left( P^{(\floor{\frac{t_1+t_2}{2}})} \cdots P^{(t_2)}  \rho^{[t_1,t_1]}_{u,\cdot}\right) ,
				 \var_{\pi} \left(P^{(\floor{\frac{t_1+t_2}{2}-1})} \cdots P^{(1)}\rho^{[t_1,t_1]} _{v,\cdot} \right) \right\}.
\]
\end{lemma}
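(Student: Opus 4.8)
\textbf{Proof plan for \lemref{inftoell2}.}

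The plan is to split the time interval $[t_1,t_2]$ at its midpoint $s = \floor{(t_1+t_2)/2}$ and write the $[t_1,t_2]$-transition probability as an inner product over the two halves. First I would observe that, by the definition of the walk,
\[
p^{[t_1,t_2]}_{v,u} = \sum_{w \in V} p^{[t_1,s-1]}_{v,w}\, p^{[s-1,t_2]}_{w,u} = \sum_{w \in V} p^{[t_1,s-1]}_{v,w}\, \bigl(P^{(s)}\cdots P^{(t_2)}\bigr)(w,u),
\]
where I am using that the second half-walk, run from time $s-1$, applies the matrices $P^{(s)},\dots,P^{(t_2)}$. Dividing by $\pi(u)$ and using reversibility to move $\pi(u)$ inside, this should turn into an $\ell_2(\pi)$ inner product of two likelihood-ratio-type functions: one obtained by pushing the point mass at $v$ forward through the first half, i.e. $\rho^{[t_1,s-1]}_{v,\cdot}$ applied with $P^{(s-1)}\cdots P^{(1)}$ in the "backward" self-adjoint sense, and one obtained by pushing the point mass at $u$ forward through the reversed second half, i.e. $P^{(s)}\cdots P^{(t_2)}\rho^{[t_1,t_1]}_{u,\cdot}$. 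The key algebraic tool here is the self-adjointness identity \eqref{eq:selfadjoint} together with the identity $P^{(t)}\cdots P^{(1)}\rho^{(0)} = \rho^{(t)}$ derived in \secref{notation}: these let me transfer each $P^{(i)}$ from one side of the inner product to the other, so that $p^{[t_1,t_2]}_{v,u}/\pi(u)$ becomes exactly $\langle g_v, h_u\rangle_\pi$ for the two functions $g_v, h_u$ appearing in the statement (each of which is a likelihood ratio, hence has $\pi$-mean $1$).

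Once this identity is in place, the bound follows from a standard Cauchy--Schwarz argument. Writing $g_v = 1 + \bar g_v$ and $h_u = 1 + \bar h_u$ where $\bar g_v, \bar h_u$ are the mean-zero parts, I would expand $\langle g_v, h_u\rangle_\pi = 1 + \langle \bar g_v, \bar h_u\rangle_\pi$ (the cross terms with the constant $1$ vanish since both functions have $\pi$-mean $1$), so that
\[
\bigl|\rho^{[t_1,t_2]}_{v,u} - 1\bigr| = \bigl|\langle \bar g_v, \bar h_u\rangle_\pi\bigr| \le \|\bar g_v\|_{2,\pi}\,\|\bar h_u\|_{2,\pi} = \sqrt{\var_\pi g_v}\,\sqrt{\var_\pi h_u} \le \max\{\var_\pi g_v, \var_\pi h_u\},
\]
using $\sqrt{ab} \le \max\{a,b\}$ when $a,b \ge 0$. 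Since $\var_\pi g_v$ and $\var_\pi h_u$ are exactly the two quantities on the right-hand side of the lemma (modulo matching up the index shifts by $1$ between $s$ and $s-1$ coming from $\floor{\cdot}$), this completes the argument.

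The main obstacle I anticipate is purely bookkeeping: getting the two half-intervals and their matrix products to line up correctly, including the off-by-one coming from $\floor{(t_1+t_2)/2}$ versus $\floor{(t_1+t_2)/2}-1$, and being careful about the \emph{order} in which transition matrices are multiplied — this is precisely the subtlety flagged in the discussion preceding the lemma, that unlike the time-homogeneous case one cannot simply say "$2t$-step probability $\le$ variance of $t$-step ratio," but must instead relate it to the variance of a \emph{reversed} partial product. Making sure the function $P^{(s)}\cdots P^{(t_2)}\rho^{[t_1,t_1]}_{u,\cdot}$ is genuinely a likelihood ratio of an honest probability distribution (so that Cauchy--Schwarz in $\ell_2(\pi)$ is legitimate and the mean is $1$) is the one place where a small check is needed, but it follows from the fact that each $P^{(i)}$ is $\pi$-reversible, hence preserves $\pi$-mean when acting on likelihood ratios.
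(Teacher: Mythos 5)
Your proposal is correct and takes essentially the same route as the paper: represent $\rho^{[t_1,t_2]}_{v,u}-1$ as an $\ell_2(\pi)$ inner product of two mean-one likelihood ratios, use self-adjointness \eqref{eq:selfadjoint} to push roughly half of the $P^{(i)}$'s onto each factor, and finish with Cauchy--Schwarz followed by $\sqrt{ab}\le\max\{a,b\}$. The only cosmetic difference is the starting point (you decompose the transition probability at the midpoint and then divide by $\pi(u)$, while the paper begins from $\rho^{[t_1,t_2]}_{v,u}=\langle\rho^{[t_1,t_1]}_{u,\cdot},\rho^{[t_1,t_2]}_{v,\cdot}\rangle_\pi$ and then moves matrices across), but this is the same computation written in a different order.
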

\begin{proof}
First notice that, for $t_2 > t_1$ and any $u,v \in V$, $\rho^{[t_1,t_2]}_{v,u} = \left\langle \rho^{[t_1,t_1]}_{u,\cdot} ,\rho^{[t_1,t_2]}_{v,\cdot} \right\rangle_{\pi}$.
This can be easily seen by the fact that $\rho^{[t_1,t_1]}_{u,\cdot}$ is equal to $1/\pi(u)$ at $u$ and to zero everywhere else. Therefore, using the fact that, for a transition matrix $P$, $P1 = 1$, and the fact that $\Ev_{\pi} f = 1$ for any likelihood $f$,
\begin{align*}
&\left|\rho^{[t_1,t_2]}_{v,u} - 1\right| \\
		&\qquad=\left|\left\langle \rho^{[t_1,t_1]}_{u,\cdot} - 1,\rho^{[t_1,t_2]}_{v,\cdot} -1 \right\rangle_{\pi} \right| \\
		&\qquad= \left|\left\langle \rho^{[t_1,t_1]}_{u,\cdot} - 1 ,P^{(t_2)} \cdots P^{(t_1)} (\rho^{[t_1,t_1]} _{v,\cdot} - 1) \right\rangle_{\pi} \right| \\
		&\qquad= \left|\langle   P^{(\floor{(t_1+t_2)/2})} \cdots P^{(t_2)}  (\rho^{[t_1,t_1]}_{u,\cdot}-1), 
				P^{(\floor{(t_1+t_2)/2-1})} \cdots P^{(1)}(\rho^{[t_1,t_1]} _{v,\cdot} - 1)\rangle_{\pi}\right|  \\
		&\le \max\left\{\| P^{(\floor{(t_1+t_2)/2})} \cdots P^{(t_2)}  \rho^{[t_1,t_1]}_{u,\cdot} - 1 \|_\pi^2, 
				 \|P^{(\floor{(t_1+t_2)/2-1})} \cdots P^{(1)}\rho^{[t_1,t_1]} _{v,\cdot} - 1\|_{\pi}^2\right\},
\end{align*}
where the third equality follows from multiple applications of reversibility,  i.e., \eq{selfadjoint}, and the last line by the Cauchy-Schwarz inequality. The statement follows by the definition of $\var_{\pi} f$.
\end{proof}

\begin{theorem}[\thmref{worsthit} (restated)]
Let $\mathcal{G}$ be a sequence of connected graphs with $n$ vertices, the same stationary distribution $\pi$, and at most $m^*$ edges in each graph. Then, for a lazy random walk on $\mathcal{G}$:
\begin{enumerate}\itemsep0pt
\item $\tmix(\mathcal{G}) = O(n/\pi_*)$,
\item $\bigl| \frac{p^{[0,t]}_{u,v}}{\pi_v} - 1 \bigr| \lesssim \frac{m^*}{t} + \frac{1}{\pi_* \sqrt{t}}$, simplifying to $\bigl| \frac{p^{[0,t]}_{u,v}}{\pi_v} - 1 \bigr| \lesssim \frac{n}{\sqrt{t}}$  if all the graphs in $\mathcal{G}$ are $d$-regular,
\item $\thit(\mathcal{G}) = O(n \log{n}/\pi_*)$. Furthermore, if the graphs in $\mathcal{G}$ are $d$-regular,  $\thit(\mathcal{G}) = O(n^2)$.
\end{enumerate} 
\end{theorem}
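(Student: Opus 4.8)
I would prove the three parts in order, each building on the previous. For the \emph{mixing time}, set $v_s := \var_\pi \rho^{[0,s]}_{u,\cdot}$; the contraction inequality~\eqref{eq:mihai} gives $v_{s+1} \le v_s - \mathcal{E}_{P^{(s+1)}}(\rho^{[0,s]}_{u,\cdot},\rho^{[0,s]}_{u,\cdot})$, and the second term of \lemref{dirichlet} lower-bounds the Dirichlet form by $\gtrsim \pi_* v_s^2/n$, so $v_{s+1}\le v_s - c\,\pi_* v_s^2/n$. This is the standard ``harmonic'' recursion: once $v_s$ has value $\eps$ it reaches $\eps/2$ within $O(n/(\pi_*\eps))$ steps. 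Starting from $v_0 \le 1/\pi_*-1$ (the variance of a point mass) and summing these halving times over dyadic scales down to $1/9$ — a geometric sum dominated by its last term — gives $\tmix(\mathcal{G}) = O(n/\pi_*)$.

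For the \emph{$t$-step probabilities} I would solve essentially the same recursion explicitly in $s$, now keeping \emph{both} terms of \lemref{dirichlet}. While $v_s$ is large the first term wins: splitting the denominator $m^*+1/(\pi_*^2(1+v_s))$ gives $\mathcal{E}_{P^{(s+1)}}(\rho^{[0,s]},\rho^{[0,s]}) \gtrsim \min(v_s^2/m^*,\ \pi_*^2 v_s^3)$, and the recursions $v_{s+1}\le v_s-c\,v_s^2/m^*$ and $v_{s+1}\le v_s-c\,\pi_*^2 v_s^3$ telescope (through $1/v$ and $1/v^2$ respectively) to $v_s \lesssim m^*/s$ and $v_s \lesssim 1/(\pi_*\sqrt s)$; once $v_s \le 1$ both these terms are $\le 1$ and I revert to the Part-1 bound. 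Hence after $s$ steps from a point mass, $v_s \lesssim m^*/s + 1/(\pi_*\sqrt s)$. Then \lemref{inftoell2} with $t_1=0,t_2=t$ bounds $|\rho^{[0,t]}_{u,v}-1|$ by the larger of two quantities, each being $\var_\pi$ of a $\lceil t/2\rceil$-step likelihood ratio of a walk started from a point mass and run along some sequence of matrices from $\mathcal{G}$ (possibly reversed, but still sharing $\pi$, so the recursion still applies) — giving $|p^{[0,t]}_{u,v}/\pi_v-1| \lesssim m^*/t + 1/(\pi_*\sqrt t)$. When every $G^{(t)}$ is $d$-regular, $\pi$ is uniform, $\pi_*=1/n$, $m^*\le n^2/2$, and the first term of \lemref{dirichlet} alone gives $\mathcal{E}_{P^{(s+1)}}(\rho^{[0,s]},\rho^{[0,s]}) \gtrsim v_s^2(1+v_s)/n^2$, so $v_s\lesssim n/\sqrt s$ and the bound simplifies to $n/\sqrt t$.

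For the \emph{hitting time} I would use a second-moment / conditional-expectation argument in the spirit of~\cite{KMS16}. Letting $N_T$ count visits to $v$ in $[0,T]$ and conditioning on the first hitting time $\tau := \tau_{u,v}$, the renewal identity $\mathbb{E}_u[N_T] = \sum_{s\le T}\mathbb{P}_u[\tau=s](1+\sum_{t=s+1}^{T}p^{[s,t]}_{v,v})$ yields
\[
\mathbb{P}_u[\,\tau \le T\,] \ \ge\ \frac{\sum_{t=1}^{T}p^{[0,t]}_{u,v}}{1+\max_{1\le s\le T}\sum_{t=s+1}^{T}p^{[s,t]}_{v,v}}.
\]
In the $d$-regular case I would take $T = \Theta(n^2)$: by Part 2, for $t$ past a $\Theta(n^2)$ threshold $p^{[0,t]}_{w,v}\ge\tfrac{1}{2n}$ for every $w$, so the numerator is $\gtrsim n$, while the denominator is $1 + \tfrac{1}{n}\sum_{t\le T}(1+Cn/\sqrt t) = O(n)$ since $\sum_{t\le n^2}t^{-1/2}=O(n)$ — no logarithmic loss — whence $\mathbb{P}_u[\tau\le T]=\Omega(1)$ and, restarting over consecutive windows, $\thit(\mathcal{G})=O(n^2)$. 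In the general case I would first turn the $\ell_2(\pi)$-mixing estimate into a pointwise one: iterating it over $\Theta(\log(1/\pi_*))=\Theta(\log n)$ blocks of length $\tmix$ gives $p^{[0,t]}_{w,v}\ge\pi_v/2$ for all $w$ once $t \ge T_1 := \Theta(\tmix\log n)=\Theta(n\log n/\pi_*)$; a window of length $\Theta(T_1)$ then makes the numerator $\gtrsim \pi_v T_1$, the denominator is bounded via the Part-2 estimate for $p^{[s,t]}_{v,v}$, and balancing gives a lower bound on the per-window hitting probability, so summing over windows yields $\thit(\mathcal{G})=O(n\log n/\pi_*)$.

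Parts 1 and 2 are essentially systematic exploitation of \lemrefs{dirichlet}{inftoell2} plus elementary analysis of the variance recursion. The technically delicate step, and the one I expect to be the main obstacle, is the hitting-time argument: one must choose the window length just right and, above all, keep the sum of return probabilities $\sum_t p^{[s,t]}_{v,v}$ in the denominator under control. This is precisely where the general bound concedes an $O(\log n)$ factor, and where the sharper regular-case estimate $n/\sqrt t$ is what lets one avoid it.
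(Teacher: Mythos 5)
Your overall strategy mirrors the paper's: Part 1 from the harmonic recursion driven by the second term of \lemref{dirichlet}; Part 2 by solving the recursion and passing through \lemref{inftoell2}; Part 3 by the renewal/conditional-expectation bound with a window of length roughly $\tmix$. Two of your steps, however, would fail as written.

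First, your derivation of the $d$-regular simplification $v_s\lesssim n/\sqrt s$ in Part 2 is incorrect. You claim the first term of \lemref{dirichlet} alone gives $\mathcal{E}\gtrsim v_s^2(1+v_s)/n^2$, but that term is $v_s^2/\bigl(m^* + n^2/(1+v_s)\bigr)$, and the $m^*$ in the denominator does not decay with $v_s$. With $m^*\approx nd/2$, the bound for large $v_s$ (i.e.\ $v_s\gtrsim n/d$) is only $\mathcal{E}\gtrsim v_s^2/(nd)$, which telescopes to $v_s\lesssim nd/s$; for $s< d^2$ this exceeds $n/\sqrt s$. The paper closes this regime with the extra observation that for $d$-regular graphs $p^{[t_1,t_2]}_{v,u}\le 1/d+2^{-(t_2-t_1)}$, hence $\eps_s\lesssim n/d\le n/\sqrt s$ once $s\le d^2$. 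Without that (or something equivalent), the $n/\sqrt t$ statement does not follow from the Dirichlet bound alone, and your regular-case hitting-time calculation, which uses $\sum_t n/\sqrt t = O(n)$, inherits the gap.

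Second, in the general case of Part 3 you propose to bound the return-probability sum $\sum_t p^{[s,t]}_{v,v}$ using the Part-2 estimate $\eps_t\lesssim m^*/t+1/(\pi_*\sqrt t)$. Integrating the second term over $t\in[1,T]$ gives $\Theta(\sqrt T/\pi_*)$, and with $T=\Theta(n/\pi_*)$ (or your $T_1=\Theta(n\log n/\pi_*)$) this is $\Theta(\sqrt n/\pi_*^{3/2})$, which exceeds $n\log n/\pi_*$ whenever $1/\pi_*\gg n\log^2 n$ — a regime the theorem is supposed to cover. The paper avoids this by using only the second branch of \lemref{dirichlet} for the return probabilities: the halving time $O(n/(\pi_*\eps))$ gives $\eps_t\lesssim n/(\pi_* t)$, so each dyadic halving window contributes $O(\pi_v\,n/\pi_*)$ to the sum and there are $O(\log(1/\pi_*))$ windows, yielding $O(\pi_v\, n\log n/\pi_*)$ cleanly. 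Equivalently, you must use the weaker but integrable bound $\eps_t\lesssim n/(\pi_* t)$ (or the minimum of the two bounds) when summing return probabilities; the Part-2 estimate alone does not suffice.

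Minor stylistic difference: you obtain the pointwise lower bound $p^{[0,t]}_{w,v}\ge\pi_v/2$ by iterating the $\ell_2$-mixing estimate over $\Theta(\log n)$ blocks, whereas the paper gets $p^{[0,t]}_{w,v}\geq\tfrac{8}{9}\pi_v$ directly from \lemref{inftoell2} after only $2\tmix$ steps. Both routes give the same final bound, but using \lemref{inftoell2} is the cleaner choice and is the one the paper takes.
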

\begin{proof} 
Thanks to \lemref{dirichlet} we can readily bound the mixing time of a random walk in $\mathcal{G}$. Let $u$ be an arbitrary vertex in $V$ and $\rho^{(t)} = \rho^{[0,t]}_{u,\cdot}$. Let $\eps^{(t)} = \var_{\pi} \rho^{(t)} = \Ev_{\pi} {\rho^{(t)}}^2 - 1$. First notice that $\var_{\pi} \rho^{(0)} = \eps^{(0)} \le 1/\pi_*$. Moreover,  \lemref{dirichlet} implies that in $\bar{t} = O(n/(\eps^{(t)} \pi_*))$ steps $\eps^{(t)}>1$ is halved. More precisely, $\eps^{\left(t+\bar{t}\right)} \le \eps^{(t)}/2$. As a consequence,
\begin{equation}
\label{eq:tmix}
\tmix(\mathcal{G}) \lesssim \frac{n}{\pi_*} \cdot \sum_{i=1}^{-\log{ \pi_*}} 2^{-i} \lesssim \frac{n}{\pi_*}. 
\end{equation}

We now use \lemref{inftoell2} to obtain bounds on the individual probabilities $p^{[0,t]}_{u,v}$. As remarked in \secref{worsthit} before \lemref{inftoell2}, while for time-homogeneous reversible Markov chains there exists a clear relation between the $\ell_{\infty}$ norm of a $t$-step probability distribution and its variance, in our case, since the chain is time-inhomogeneous, this relation doesn't necessarily holds. What \lemref{inftoell2} shows, however, is that after $t$ steps the decrease in the $\ell_{\infty}$ is roughly equivalent to the \emph{worst-case} decrease in the variance after $t/2$ steps. In particular, it essentially shows that $\rho^{[t_1,t_2]}_{v,u} = p^{[t_1,t_2]}_{v,u} / \pi$ is as small as $\var_{\pi} \rho^{\floor{(t_1+t_2)/2}}$ for some initial likelihood ratio $\rho^{(0)}$ and $\floor{(t_1+t_2)/2}$ steps of random walk on graphs appearing in $\mathcal{G}$ (not necessarily in the same order as they appear in $\mathcal{G}$). But any step decreases the variance of $\rho^{(t)}$ at least by the quantity given by \lemref{dirichlet}. More precisely, \lemref{dirichlet} states that, if $\var_{\pi} \rho^{(t)} = \eps^{(t)}$, $\eps^{(t)}$ is halved in $O(m^*/\eps^{(t)} + 1/(\pi_*^2 \eps^{(t)}(1+\eps^{(t)}) ))$ steps. From this and \lemref{inftoell2} it follows that
\begin{equation}
\label{eq:returns}
\left| \frac{p^{[t_1,t_2]}_{v,u}}{\pi_u} - 1 \right| \lesssim \frac{m^*}{t_2-t_1} + \frac{1}{\pi_* \sqrt{t_2-t_1}}.
\end{equation}
Moreover, in the case of a sequence of $d$-regular graphs we can further simplify this bound using the simple fact that $p^{[t_1,t_2]}_{v,u} \le 1/d  + 2^{-(t_2-t_1)}$ to obtain
\begin{equation}
\label{eq:returns_reg}
\left| \frac{p^{[t_1,t_2]}_{v,u}}{\pi_u} - 1 \right| \lesssim \frac{n}{\sqrt{t_2-t_1}}.
\end{equation}

%\NOTE{T: This hitting time proof be put to the appendix.}
Let us now bound the maximum hitting time. For any $u,v \in V$, let $X^{(t)}_{u,v}$ be a boolean random variable which is equal to $1$ if and only if a walk starting from $v$ visits $u$ at time $t$. Let $Z = \sum_{t=T}^{2T} X_{v,u}$ be a random variable counting the number of times the walk hits $u$ starting from $v$ in a time $T = \Theta(n/\pi_*)$ such that $T \ge 4 \tmix(\mathcal{G})$. We first need to lower bound $\E{Z}$, i.e., the expected number of visits to $v$ in that time interval. Since $T \ge 4 \tmix(\mathcal{G})$, by the relation discussed above between the variance and the individual probabilities in the random walk distribution, we know that for any $t \ge 2\tmix(\mathcal{G})$, $p^{[t_1,t_2]}_{v,u} \ge (8/9) \cdot \pi(u)$. Therefore, we have that $\E{Z} \gtrsim T \cdot \pi(u)$, and 
\begin{equation}
\label{eq:hitupper}
\Pr{Z \ge 1}  = \frac{\E{Z}}{\E{Z \,|\, Z \ge 1}} \gtrsim  \frac{(n/\pi_*) \cdot \pi(u)}{ \max_{T \le \tau \le 2T} \sum_{t=0}^{T} p^{[\tau,\tau+t]}_{u,u}}.
\end{equation}
It remains to upper bound $\E{Z \,|\, Z \ge 1} \le \max_{T \le \tau \le 2T} \sum_{t=0}^{T} p^{[\tau,\tau+t]}_{u,u}$ which corresponds to the expected number of returns to $u$ in the interval $[\tau,\tau+t]$. By \lemref{dirichlet} and \lemref{inftoell2}, we know that $\eps^{(t)} = |p^{[\tau,\tau+t]}_{u,u}/\pi(u) -1|$ is halved every $O(n/(\pi_* \eps^{(t)}))$ steps. Clearly, the sum of the return probabilities in a window of $O(n/(\pi_* \eps^{(t)}))$ steps is $O(\pi(u) \cdot n /\pi_*)$. Since there are at most $O(\log{1/\pi_*})$ such time windows, we have that
\[
\E{Z \,|\, Z \ge 1} \le \max_{T \le \tau \le 2T} \sum_{t=0}^{T} p^{[\tau,\tau+t]}_{u,u} \lesssim \pi(u) \cdot (n/\pi_*) \cdot \log{1/\pi_*}.
\]
Together with \eq{hitupper}, we obtain that $\Pr{Z \ge 1} = \Omega(1/\log{n})$, which means that starting from $v$, with 
 probability $\Omega(1/\log{n})$, we will hit $u$ in $O(n/\pi_*)$ steps. We can conclude then, since the vertex $v$ was chosen arbitrarily, that the hitting time is $\thit(\mathcal{G}) = O(n\log{n}/\pi_* )$. For regular graphs, the same argument combined with \eq{returns_reg} and the fact that $\pi$ is uniform, leads to a $O(n^2)$ hitting time.
\end{proof}

%%%%%%%%%%%%%%%%%%%%%%%%%%%%%%%%%%%%%%
\section{Omitted proof from \secref{twopluseps}}
\begin{lemma}[\lemref{decrease_eps} (restated)]
Let $G=(V,E)$ be a $d$-regular undirected graph with $|V| = n$ and $d=O(1)$ such that, for any $A \subset V$ with $1 \le |A| \le n/2$,
$
|E(A,V \setminus A)| = \Omega(|A|^{\frac{1}{2} + \eps})
$ for $1/4 \ge \eps \ge 0$. Consider the transition matrix $P$ of a lazy random walk in $G$. Let $\sigma$ be any probability distribution and $f = \sigma/\pi$, where $\pi$ is the uniform distribution.  If $\Ev_{\pi} f^2 = \beta > C$ for a large enough constant $C$, then
\[
\mathcal{E}_P(f,f) \gtrsim  \frac{\beta^{2-2\eps}}{n^{1-2\eps}}.
\]
\end{lemma}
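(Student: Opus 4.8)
The plan is to lower–bound the Dirichlet form $\mathcal{E}_P(f,f)=\frac{1}{4m}\sum_{u\sim v}(f(u)-f(v))^2$ (with $m=nd/2$) by a co-area argument applied to $f^2$, feeding the isoperimetric hypothesis into it at the level of super-level sets, and then returning to $\sum(f(u)-f(v))^2$ by Cauchy--Schwarz. Throughout, $\pi$ uniform gives $\sum_v f(v)=n$ and $\sum_v f(v)^2=n\beta$, and $f\ge 0$. The one quantitative point to keep in mind is that Markov's inequality applied \emph{at every level} yields $|A_t|\le n/t$ for the set $A_t:=\{v:f(v)>t\}$; in particular $|A_t|\le n/2$ whenever $t\ge 2$, so the hypothesis $|\partial A_t|\ge c\,|A_t|^{1/2+\eps}$ is available there (reading it as $\ge 0$ when $A_t=\emptyset$).

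First I would write the co-area identity $\sum_{u\sim v}|f(u)^2-f(v)^2|=\int_0^\infty|\partial\{f^2>r\}|\,dr=\int_0^\infty|\partial A_t|\,2t\,dt$ (substituting $r=t^2$). Discarding the part $t<\beta/8$ (legitimate since $\beta/8\ge 2$ once $\beta$ exceeds the absolute constant $C$) and inserting the isoperimetric bound gives $\sum_{u\sim v}|f(u)^2-f(v)^2|\ge c\int_{\beta/8}^\infty|A_t|^{1/2+\eps}\,2t\,dt$. The next, crucial, step is to exploit that $f$ has mean $1$, not just bounded second moment: for $t\ge\beta/8$ we have $|A_t|\le|A_{\beta/8}|\le 8n/\beta$ by Markov, and since the exponent $-\tfrac12+\eps$ is negative this yields $|A_t|^{1/2+\eps}\ge (8n/\beta)^{-1/2+\eps}\,|A_t|$; hence the integral is at least $(8n/\beta)^{-1/2+\eps}\int_{\beta/8}^\infty|A_t|\,2t\,dt$. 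Finally $\int_{\beta/8}^\infty|A_t|\,2t\,dt\ge \tfrac12 n\beta$, because $\int_0^\infty|A_t|\,2t\,dt=\sum_v f(v)^2=n\beta$ while $\int_0^{\beta/8}|A_t|\,2t\,dt\le\int_0^2 n\cdot 2t\,dt+\int_2^{\beta/8}(n/t)\cdot 2t\,dt\le 4n+n\beta/4\le\tfrac12 n\beta$ for $\beta$ large. Combining, $\sum_{u\sim v}|f(u)^2-f(v)^2|\gtrsim n^{1/2+\eps}\beta^{3/2-\eps}$.

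To convert this back, I would use $|f(u)^2-f(v)^2|=|f(u)-f(v)|\cdot|f(u)+f(v)|$ and Cauchy--Schwarz over the edge set, $\bigl(\sum_{u\sim v}|f(u)^2-f(v)^2|\bigr)^2\le\bigl(\sum_{u\sim v}(f(u)-f(v))^2\bigr)\bigl(\sum_{u\sim v}(f(u)+f(v))^2\bigr)$, together with $\sum_{u\sim v}(f(u)+f(v))^2\le 2\sum_{u\sim v}(f(u)^2+f(v)^2)=2d\sum_v f(v)^2=2dn\beta$. This gives $\sum_{u\sim v}(f(u)-f(v))^2\gtrsim \dfrac{(n^{1/2+\eps}\beta^{3/2-\eps})^2}{dn\beta}=\dfrac{n^{2\eps}\beta^{2-2\eps}}{d}$, and dividing by $4m=2nd$ yields $\mathcal{E}_P(f,f)\gtrsim \dfrac{\beta^{2-2\eps}}{d^2\,n^{1-2\eps}}$, which is the claim since $d=O(1)$.

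The main obstacle is Step two above: getting the exponent $\beta^{3/2-\eps}$ rather than the naive $\beta^1$ that one obtains just from $|A_t|\le n/2$. This improvement is exactly where the constraint $\E_\pi f=1$ must be used, in the twin forms ``most of the second-moment mass $\int|A_t|\,2t\,dt$ lies beyond $t=\beta/8$'' and ``$|A_{\beta/8}|\le 8n/\beta$''. I expect the rest to be routine: the co-area identity and the Cauchy--Schwarz step are standard, and a single-path estimate in the style of Lemma~\ref{lem:dirichlet} is too lossy to reach the bound, so the level-set bookkeeping is essential. (One could alternatively phrase Steps one--two as an instance of a discrete Nash/Faber--Krahn inequality for isoperimetric dimension $2/(1-2\eps)$, but the direct argument above is self-contained.)
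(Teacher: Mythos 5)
Your proof is correct and takes a genuinely different route from the paper's. The paper defines a threshold set $K=\{u:f(u)\ge\beta/10\}$, uses the isoperimetric hypothesis iteratively to build $\gtrsim |K|^{1/2+\eps}$ vertex-disjoint paths from $K$ out to vertices with $f<\beta/20$, applies Cauchy--Schwarz along each path and then the arithmetic-harmonic-mean inequality across paths to aggregate, and finally patches up the case $|K|=o(n/\beta)$ with a dyadic bucketing argument $K_i=\{f\ge (\beta/100)2^i\}$. Your argument instead applies the discrete co-area formula to $f^2$, feeds the isoperimetric hypothesis into \emph{every} super-level set $A_t=\{f>t\}$ simultaneously, leverages the constraint $\mathbb{E}_\pi f=1$ via Markov to get $|A_t|\le n/t$ at each level (the key quantitative improvement you correctly identify), uses the layer-cake identity to pin down where the second-moment mass sits, and finishes with a single Cauchy--Schwarz over the edge set to pass from $\sum|f(u)^2-f(v)^2|$ back to $\sum(f(u)-f(v))^2$. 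I checked the bookkeeping: $|A_t|\le 8n/\beta$ for $t\ge \beta/8$, the negative exponent $-1/2+\eps$ flips the inequality the right way, the tail-mass lower bound $\int_{\beta/8}^\infty|A_t|\,2t\,dt\ge n\beta/2$ holds once $\beta$ exceeds an absolute constant, and the final exponent arithmetic $(n^{1/2+\eps}\beta^{3/2-\eps})^2/(dn\beta)\cdot(4m)^{-1}=\Theta(\beta^{2-2\eps}/(d^2 n^{1-2\eps}))$ is right. A pragmatic advantage of your route is that it dispenses entirely with the paper's explicit path construction (the description of which in the paper is somewhat informal) and with the case split on $|K|$ versus the bucketed $|K_i|$; the co-area/level-set formulation absorbs both into one integral. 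The paper's path argument has the mild virtue of being more combinatorially explicit and closer in spirit to the single-path Cauchy--Schwarz used in Lemma~\ref{lem:dirichlet}, which gives the reader a uniform toolkit across the worst-case and isoperimetric sections.
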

\begin{proof}
We first define a set $K \subset V$ which contains all vertices with large probability mass:
\[
K = \{ u \in V \colon f(u) \ge \beta/10 \} = \{ u \in V \colon \sigma(u) \ge \beta/(10 n) \}.
\]
We first assume that $|K| = \Omega(n/\beta)$ and later we will show how to drop this assumption.

By the isoperimetric property of $G$, we can choose $s  \gtrsim |K|^{\frac{1}{2} + \eps} d^{-2} \gtrsim (n/\beta)^{\frac{1}{2} + \eps} d^{-2} $ vertex-disjoint paths $\mathcal{P}_1,\dots,\mathcal{P}_s$ of length (respectively) $\ell_1,\dots,\ell_s$ such that each path starts in $K$, but all the other vertices visited by the path are outside $K$. Moreover, let $(u_1,v_1),\dots,(u_s,v_s)$ be the pair of starting/ending points of the paths. We claim we can choose the paths so that $f(v_i) < \beta/20$ and $(f(u_i)-f(v_i))^2 \ge  (\beta/20)^2$ for any $1 \le i \le s$. Such paths can be constructed as follows: thanks to the isoperimetric property of $G$ we can choose $\Omega\left(|K|^{\frac{1}{2} + \eps}\right)$ edges that goes from $K$ to its complement. Since any vertex has degree at most $d$, we can pick  $s  \gtrsim |K|^{\frac{1}{2} + \eps} d^{-2} $ such edges that do not share any endpoint. Let $K' \subset V$ be the union of $K$ and the endpoints of these $s$ edges and let $s' = |\{u \in K' \colon f(u) < \beta/20\}|$. Again by the isoperimetric property, we can choose other $(s-s')$ edges from $K'$ to its complement, so that no edge involves any vertex in $K$ (if an edge involves a new, ``unused'' , vertex from $K$, we could have used that edge earlier in the construction of such paths). We can repeat this process till we have constructed a set $K'' \subset V$ containing at least $s$ vertices $v_1,\dots,v_s$ such that $f(v_i) < \beta/20$. Notice that each vertex $v_i$ is

%\textcolor{blue}{Luca: notice that Cheeger gives us a mixing time of $O(n^{1-2\eps} \cdot \log{n})$. With this result we should be able to shave off this factor of $\log{n}$. This is tight for $\eps=0,1/6$, which correspond to the torus in $2$ and $3$ dimensions.}
%
%\textcolor{red}{Luca: It's easy to get the same bound for unbounded degree graphs if we assume $|E(A,V \setminus A)| = \Omega(d\cdot |A|^{\frac{1}{2} + \eps})$. The natural assumption, though, should be $|E(A,V \setminus A)| = \Omega((d\cdot |A|)^{\frac{1}{2} + \eps})$.} \\
 connected by a path $\mathcal{P}_i$ to a vertex $u_i \in K$ and the paths do not share any vertex. Given paths $\mathcal{P}_1,\dots,\mathcal{P}_s$ constructed in this way, we can bound $\mathcal{E}_P(f,f)$ as follows:
\begin{align*}
\mathcal{E}_P(f,f) 	&= \frac{1}{d n} \sum_{u \sim v} (f(u)-f(v))^2 \\
				&\ge \frac{1}{d n}  \sum_{i=1}^s \sum_{\{u,v\} \in \mathcal{P}_i} (f(u)-f(v))^2 \\
				&\ge \frac{1}{d n}  \sum_{i=1}^s \frac{(f(u_i)-f(v_i))^2}{\ell_i} \gtrsim \frac{\beta^2}{dn} \sum_{i=1}^s \frac{1}{\ell_i}.
\end{align*}
Since $\Ev_{\pi} f = 1$, by Markov's inequality there can be at most  $20 n/\beta$ vertices $x$ such that $f(x) \ge \beta/20$. Hence, we have that $\sum_{i=1}^s \ell_i \le 20 n/ \beta$. Moreover, $s \gtrsim (n/\beta)^{\frac{1}{2} + \eps} $ (from now on we drop the dependence on $d$ since $d=O(1)$). By the relation between harmonic and arithmetic mean, $\sum_{i=1}^s \frac{1}{\ell_i} \ge \frac{s^2}{\sum_{i=1}^s \ell_i}$, and
\[
\mathcal{E}_P(f,f) \gtrsim \frac{\beta^2 s^2}{n \sum_{i=1}^s \ell_i} \gtrsim \frac{\beta^{2-2\eps}}{n^{1-2\eps}},
\]
which conclude the proof as long as $|K| = \Omega(n/\beta)$. We now show how to achieve the same result without this assumption on $K$.
We divide the vertices into buckets:
$
  K_i:= \left\{ u \in V \colon f(u) \ge \beta/100 \cdot 2^i  \right\}.
$
Suppose now that for every $K_i$ we have $|K_i| \leq n/\beta \cdot 2^{-2i}$.
Then it follows that:
\[
\Ev_{\pi} f^2 \le \frac{1}{n} \sum_{\substack{u \in V \\ f(u) \leq \beta/100}} f(u)^2 + \frac{1}{n} \sum_{i=1}^{\log_2 n} |K_i| \cdot (\beta/100)^2 \cdot 2^{i}.
\]
Since $\Ev_{\pi} f = 1$, the first term is bounded by $\beta/100$. Furthermore, the second term is also at most a small constant times $\beta$ thanks to our condition on $|K_i|$. Therefore, $\Ev_{\pi} f^2 < \beta$, which contradicts the assumption  $\Ev_{\pi} f^2 = \beta$.
In conclusion, there must exist an index $1 \leq i \leq \log_2 n$ such that
\[
|K_i| \geq n/\beta \cdot 2^{-2i}.
\]
In this case, we consider again the paths to vertices $v$ with $\sigma(v) \leq \beta \cdot 2^i/100$. As argued above, there will be $s \gtrsim {|K_i|}^{1/2+\eps}$ vertex-disjoint paths with lengths $\ell_1, \dots, \ell_s$ such that $\sum_{i=1}^s \ell_i \le 200 n /(\beta\cdot 2^i)$ and the gap between the values taken by $f$ on their extremities is at least $\Omega( \beta \cdot 2^i)$. Following the previous argument we have that
\begin{align*}
\mathcal{E}_P(f,f) 	&\gtrsim \frac{2^{2i}\beta^2}{n} \sum_{i=1}^s \frac{1}{\ell_i} \ge  \frac{2^{2i} \beta^2 s^2}{n \sum_{i=1}^s \ell_i}  \gtrsim \frac{2^{i-4\eps}\beta^{2-2\eps}}{n^{1-2\eps}} \gtrsim \frac{\beta^{2-2\eps}}{n^{1-2\eps}}.
\end{align*}

\end{proof}

%We now apply the previous lemma to prove the main result of this section, in an analogous way to the proof of \thmref{worsthit}.

\begin{theorem}[\thmref{twopluseps} (restated)]
Let $\mathcal{G}=\{G^{(t)}\}_{t=1}^{\infty}$ be a sequence of $n$-vertex graphs such that each $G^{(t)}$ is regular, has bounded degree, and satisfies the following isoperimetric condition: there exists $\eps \in [0,1/4]$ such that, for any subset of vertices $A$ with $1 \le |A| \le n/2$, $|E(A,V \setminus A)| = \Omega(|A|^{\frac{1}{2} + \eps})$. Then,
\begin{enumerate}\itemsep0pt
\item $\tmix(\mathcal{G}) = O(n^{1-2\eps})$,
\item $\bigl| \frac{p^{[0,t]}_{u,v}}{v} - 1 \bigr| = O\left(\frac{1}{t^{1+2\eps}}\right)$,
\item $\thit(\mathcal{G}) = O(n)$ if $\eps>0$, $\thit(\mathcal{G}) = O(n\log{n})$ if $\eps=0$.
\end{enumerate} 
%Then, $\thit(\mathcal{G}) = O(n)$. Moreover, if \eq{iso2} holds for $\eps = 0$, $\thit(\mathcal{G}) = O(n \log{n})$.
\end{theorem}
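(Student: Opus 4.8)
The plan is to follow, almost line for line, the three-step argument used for \thmref{worsthit}, with \lemref{decrease_eps} now supplying the Dirichlet-form lower bound in place of \lemref{dirichlet}, while \eq{mihai} and \lemref{inftoell2} are reused unchanged. Throughout, $\pi$ is uniform, so a point-mass start has $\var_{\pi}\rho^{(0)}\le n$, and the isoperimetric hypothesis applied to a set of size $n/2$ gives each $G^{(t)}$ conductance $\gtrsim n^{-1/2+\eps}$, hence $\lambda(P^{(t)})\gtrsim n^{-1+2\eps}$ by Cheeger. For part~1, write $\beta^{(t)}=\var_{\pi}\rho^{(t)}$. While $\beta^{(t)}$ exceeds the constant $C$ of \lemref{decrease_eps}, that lemma together with \eq{mihai} gives $\beta^{(t+1)}\le\beta^{(t)}-c\,(\beta^{(t)})^{2-2\eps}/n^{1-2\eps}$, so the number of steps needed to halve the variance from a level $\beta$ is $\lesssim n^{1-2\eps}\beta^{-(1-2\eps)}$; summing this over the dyadic levels $\beta\approx n,\,n/2,\,\dots,\,C$ gives a geometric sum (ratio $2^{1-2\eps}>1$) dominated by its top term $\Theta(n^{1-2\eps})$, so $\beta^{(t)}$ drops below $C$ within $O(n^{1-2\eps})$ steps. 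Once $\beta^{(t)}=O(1)$ the lemma is vacuous, and one contracts instead by the spectral gap, $\beta^{(t+1)}\le(1-\lambda(P^{(t+1)}))\beta^{(t)}\le(1-cn^{-1+2\eps})\beta^{(t)}$, which reaches $\beta^{(t)}\le 1/9$ in a further $O(n^{1-2\eps})$ steps. Hence $\tmix(\mathcal{G})=O(n^{1-2\eps})$.

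For part~2, iterating the same recursion from $\beta^{(0)}\le n$ and solving for the variance reached after $t$ steps yields $\var_{\pi}\rho^{(t)}\lesssim n\,t^{-1/(1-2\eps)}\le n\,t^{-(1+2\eps)}$. Since \lemref{decrease_eps} is a statement about one graph of $\mathcal{G}$ and \eq{mihai} is insensitive to the order in which the lazy reversible matrices are applied, this same bound holds for both reordered walks on the right-hand side of \lemref{inftoell2}; substituting it bounds $\bigl|\rho^{[0,t]}_{u,v}-1\bigr|$ by the variance of a $\lfloor t/2\rfloor$-step walk, giving the estimate stated in part~2.

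For part~3, fix $u,v$, choose $T=\Theta(n^{1-2\eps})$ with $T\ge 4\tmix(\mathcal{G})$, and let $Z=\sum_{t=T}^{2T}X^{(t)}_{v,u}$ count the visits to $u$ of a walk started at $v$. By \lemref{inftoell2} together with part~1 (applied to time windows starting at arbitrary times, which is valid because \lemref{decrease_eps} bounds the Dirichlet form of every $G^{(t)}$), $p^{[0,t]}_{v,u}\ge\tfrac{8}{9}\pi(u)$ for every $t\ge 2\tmix$, so $\E{Z}\gtrsim T\pi(u)=T/n$. For the upper tail, exactly as in the proof of \thmref{worsthit}, $\E{Z\mid Z\ge 1}\le\max_{T\le\tau\le 2T}\sum_{t=0}^{T}p^{[\tau,\tau+t]}_{u,u}=\tfrac1n\max_{\tau}\sum_{t=0}^{T}\rho^{[\tau,\tau+t]}_{u,u}$; bounding $\rho^{[\tau,\tau+t]}_{u,u}$ by $1+\min\{\,n,\;cn\,t^{-(1+2\eps)}\,\}$ (part~2 for large $t$, the trivial bound $\rho\le n$ for small $t$) gives $\sum_{t=0}^{T}\rho^{[\tau,\tau+t]}_{u,u}\lesssim T+n\sum_{t=1}^{T}t^{-(1+2\eps)}$, the series being $O(1)$ if $\eps>0$ and $\Theta(\log n)$ if $\eps=0$; as $T/n=O(1)$ in either regime, $\E{Z\mid Z\ge 1}=O(1)$, respectively $O(\log n)$. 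Hence $\Pr{Z\ge 1}=\E{Z}/\E{Z\mid Z\ge 1}$ is $\Omega(T/n)=\Omega(n^{-2\eps})$ when $\eps>0$ and $\Omega(1/\log n)$ when $\eps=0$, and restarting the walk every $2T$ steps yields a hitting time of $O\bigl(T/\Pr{Z\ge 1}\bigr)$, that is $O(n)$ for $\eps>0$ and $O(n\log n)$ for $\eps=0$.

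With \lemref{decrease_eps} available, everything downstream is the bookkeeping already developed for \thmref{worsthit}, so I expect only two mildly delicate points. The first is the tail of part~1: \lemref{decrease_eps} is only informative while the variance is large, so the final descent from a constant down to $1/9$ must be handled by a separate, cruder Cheeger-type estimate. The second, and the one that really carries the content, is the return-sum $\sum_{t\le T}p^{[\tau,\tau+t]}_{u,u}$ in part~3, which has to be controlled by splicing the large-$t$ decay of part~2 with the trivial small-$t$ bound; it is precisely the borderline convergence of $\sum_t t^{-(1+2\eps)}$ that produces the extra $\log n$ factor exactly in the case $\eps=0$, and one must also fix the window length $T$ as a suitable constant multiple of $\tmix$ for the restart argument to close.
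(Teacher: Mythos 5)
Your proof is correct and takes essentially the same route as the paper's: \lemref{decrease_eps} produces a Dirichlet-form lower bound, \eq{mihai} turns this into a variance-decay recursion, \lemref{inftoell2} converts the variance decay into a bound on $t$-step return probabilities, and the occupation-count/restart argument closes the hitting-time bound. Your Cheeger-based contraction for the sub-constant-variance regime is a useful explicit patch (the paper's brief proof of part~1 does not spell out how $\var_\pi\rho^{(t)}$ descends from the constant $C$ of \lemref{decrease_eps} to $1/9$), and your choice $T=\Theta(n^{1-2\eps})$ differs from the paper's $T=cn$ only cosmetically -- the paper gets $\Pr{Z\ge 1}=\Omega(1)$ for $\eps>0$ in one window while you iterate $O(n^{2\eps})$ windows, but both yield the same $O(n)$ and $O(n\log n)$ bounds.
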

\begin{proof}
We start deriving bounds on mixing and $t$-step probabilities. As shown by \lemref{inftoell2}, $\rho^{[\tau,\tau+t]}_{v,v} = p^{[\tau,\tau+t]}_{v,v}/ \pi(v)$ decreases at a rate which is proportional to the rate given by \lemref{decrease_eps}. In particular, $\beta= \rho^{[\tau,\tau+t]}_{v,v}$ gets halved after $O(n/\beta)^{1-2\eps}$ steps, which implies that $p^{[\tau,\tau+t]}_{v,v} \le \frac{1}{n} + O\left(\frac{1}{t^{1+2\eps}}\right)$ and $\tmix(\mathcal{G}) = O(n^{1-2\eps})$.

%We start considering the case when $\eps > 0$. First of all we argue that $\tmix(\mathcal{G}) = O(n)$. This follows from the fact that, for any transition matrix $P$ of a graph in $\mathcal{G}$, \eq{iso2} implies $\Phi(P) = \omega(1/\sqrt{n})$, and $\tmix(\mathcal{G}) = O(n)$ can be derived by Cheeger's inequality.

We now want to bound the expected hitting time from a vertex $u$ to a vertex $v$.  Let $\pi$ be the uniform distribution. As in the proof of \thmref{worsthit}, we introduce a variable $Z$ which counts the number of visits to $v$ in some time window of length $\Theta(n)$. More precisely, let $T = c \cdot n$ for some large enough constant $c$. We define $Z = \sum_{t=T}^{2T} X_{u,v}^{(t)}$ where $X_{u,v}^{(t)}$ is a Boolean variable that is equal to $1$ if and only if the walk, which started at $u$, is at vertex $v$ at time $t$. Since $T \ge \tmix(\mathcal{G})$, $\E{Z} \gtrsim T \cdot \pi(v) = \Omega(1)$. We then want to bound $\E{Z | Z \ge 1} \le \max_{T \le \tau \le 2T} \sum_{t=0}^T p^{[\tau,\tau+t]}_{v,v}$. We first consider the case when $\eps > 0$. From the bound on $t$-step probabilities derived above, we have $\sum_{t=0}^T p^{[\tau,\tau+t]}_{v,v} = O(1)$.
Therefore, $\Pr{Z \ge 1}  = {\E{Z}}/{\E{Z | Z \ge 1}} = \Omega(1)$, from which it follows that $\thit(\mathcal{G}) = O(n)$.

For the case when $\eps = 0$, instead,  we have that $\E{Z | Z \ge 1} \le \max_{T \le \tau \le 2T} \sum_{t=0}^T p^{[\tau,\tau+t]}_{v,v} = O(\log{n})$. Therefore, $\Pr{Z \ge 1}  = {\E{Z}}/{\E{Z | Z \ge 1}} = \Omega(1/\log{n})$. Since we have a $\Omega(1/\log{n})$ probability to hit $v$ in $O(n)$ steps, we just need to repeat the process $O(\log{n})$ times to obtain a constant probability of hitting $v$, which result in $\thit(\mathcal{G}) = O(n\log{n})$.  
\end{proof}

%%%%%%%%%%%%%%%%%%%%%%%%%%%%%%%%%%%%%%%%%%%%%%%%%%%%%%%%%%%%%%%%%%%%%%%%%%%%%%
%%%%%%%%%%%%%%%%%%%%%%%%%%%%%%%%%%%%%%%%%%%%%%%%%%%%%%%%%%%%%%%%%%%%%%%%%%%%%%
\section{Omitted proofs from \secref{average}}

\begin{lemma}[\lemref{imp} (restated)]
Let $p^{(0)}$ be an arbitrary initial probability distribution, and $\rho^{(0)} = p^{(0)}/\pi$. Suppose that for some $t \ge 1$ and $u \in V$, $| \rho^{(t)}(u) - \rho^{(0)}(u) | \ge \eps > 0$. Then,
\[
\var_{\pi}{\rho^{(0)}} - \var_{\pi}{\rho^{(t)}} \ge \frac{\alpha_u}{4m} \sum_{i=1}^t \sum_{v \sim_i u} \left( \rho^{(i-1)}(u) - \rho^{(i-1)}(v) \right)^2
		\ge \frac{2\eps^2 \pi(u)}{t}.
\]
\end{lemma}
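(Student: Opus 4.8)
The plan is to establish the two inequalities separately. For the first one (the middle term is a lower bound on the variance drop), I would telescope \eqref{eq:mihai} over steps $0,1,\dots,t-1$, obtaining $\var_\pi\rho^{(0)} - \var_\pi\rho^{(t)} \ge \sum_{i=1}^t \mathcal{E}_{P^{(i)}}(\rho^{(i-1)},\rho^{(i-1)})$, and then discard from each Dirichlet form $\tfrac12\sum_{x,y}(\rho^{(i-1)}(x)-\rho^{(i-1)}(y))^2\pi(x)P^{(i)}(x,y)$ every term except those with $x=u$ or $y=u$. Reversibility of $P^{(i)}$ merges each ordered pair $(u,v),(v,u)$ into a single quantity $(\rho^{(i-1)}(u)-\rho^{(i-1)}(v))^2\pi(u)P^{(i)}(u,v)$, and here the identity $\pi(u)=\alpha_u d_u/(2m)$ together with $\alpha_u=\alpha_v$ for neighbours gives $\pi(u)P^{(i)}(u,v)=\alpha_u/(4m)$ whenever $u\sim_i v$, so $\mathcal{E}_{P^{(i)}}(\rho^{(i-1)},\rho^{(i-1)}) \ge \frac{\alpha_u}{4m}\sum_{v\sim_i u}(\rho^{(i-1)}(u)-\rho^{(i-1)}(v))^2$. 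Summing over $i$ yields the middle expression.

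For the second inequality I would use that one random-walk step changes $\rho$ at $u$ by a local average of differences: since $\rho^{(i)}=P^{(i)}\rho^{(i-1)}$ and the lazy self-loop contributes nothing, $\rho^{(i)}(u)-\rho^{(i-1)}(u) = \frac{1}{2d_u}\sum_{v\sim_i u}(\rho^{(i-1)}(v)-\rho^{(i-1)}(u))$ (and both sides vanish in rounds where $u$ is isolated; by the degree normalisation recalled in the footnote we may take $\deg_{G^{(i)}}(u)=d_u$ in every other round). Telescoping over $i=1,\dots,t$ and invoking the hypothesis $|\rho^{(t)}(u)-\rho^{(0)}(u)|\ge\eps$ gives $\eps \le \bigl|\sum_{i=1}^t\frac{1}{2d_u}\sum_{v\sim_i u}(\rho^{(i-1)}(v)-\rho^{(i-1)}(u))\bigr|$. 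Now I would apply Cauchy--Schwarz to this as a sum over the index pairs $(i,v)$ with $v\sim_i u$, of which there are at most $td_u$; this gives $\eps^2 \le \frac{t}{4d_u}\sum_{i=1}^t\sum_{v\sim_i u}(\rho^{(i-1)}(u)-\rho^{(i-1)}(v))^2$, i.e.\ $\sum_{i,v}(\cdot)^2 \ge 4d_u\eps^2/t$. Feeding this into the middle expression and using $\alpha_u d_u/m = 2\pi(u)$ produces exactly $\frac{\alpha_u}{4m}\cdot\frac{4d_u\eps^2}{t} = \frac{2\eps^2\pi(u)}{t}$.

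The hard part, such as it is, is the Cauchy--Schwarz step: the number of summands must be exactly $\sum_{i=1}^t \deg_{G^{(i)}}(u)$, which the footnote's normalisation lets me bound by $td_u$, and since this factor of $td_u$ is precisely what generates the $1/t$ in the target bound there is no slack to be wasted. Everything else — the telescoping of \eqref{eq:mihai}, the arithmetic with $\alpha_u$, $d_u$, $m$, $\pi(u)$, and checking that isolated-vertex rounds contribute $0$ on both sides of every displayed identity — is routine bookkeeping.
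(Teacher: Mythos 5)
Your proof is correct and follows essentially the same route as the paper: telescope \eqref{eq:mihai} and drop all Dirichlet-form terms except those incident to $u$ for the first inequality, then combine the local update $\rho^{(i)}(u)-\rho^{(i-1)}(u)=\frac{1}{2d_u}\sum_{v\sim_i u}(\rho^{(i-1)}(v)-\rho^{(i-1)}(u))$ with Cauchy--Schwarz and $\pi(u)=\alpha_u d_u/(2m)$ for the second. The only cosmetic difference is that the paper works with $p^{(i)}$ and divides by $\pi(u)$ at the end, and applies Cauchy--Schwarz in two stages (within each step, then across steps) rather than your single application over all $td_u$ index pairs; the resulting bound is identical.
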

\begin{proof}
First notice that, for any $1 < i \le t$,
\[
p^{(i)}(u) = \frac{p^{(i-1)}(u)}{2} + \sum_{v \sim_i u} \frac{p^{(i-1)}(v)}{2d_v} = \frac{1}{2}\left(p^{(i-1)}(u) +  \sum_{v \sim_i u} \frac{\alpha_u}{2m} \rho^{(i-1)}(v)\right),
\]
where we used the fact that $\pi(v) = \alpha_u d_v/2m$ for $v \sim_i u$. Therefore, subtracting $p^{(i-1)}(u)$ to both sides,
\begin{align*}
p^{(i)}(u) - p^{(i-1)}(u) &= \frac{1}{2} \sum_{v \sim_i u}\left(  \frac{\alpha_u}{2m} \rho^{(i-1)}(v) - \frac{p^{(i-1)}(u)}{d_u}\right) \\
	&= \frac{\alpha_u}{4m} \sum_{v \sim_i u}\left( \rho^{(i-1)}(v) -  \rho^{(i-1)}(u)\right).
\end{align*}
Taking the absolute value and applying Cauchy-Schwarz, we obtain that
\begin{align*}
\left| p^{(i)}(u) - p^{(i-1)}(u) \right| &\le \frac{\alpha_u}{4m} \sum_{v \sim_i u}\left| \rho^{(i-1)}(v) -  \rho^{(i-1)}(u)\right| \\
	&\le \frac{\alpha_u}{4m} \sqrt{d_u \sum_{v \sim_i u}\left( \rho^{(i-1)}(v) -  \rho^{(i-1)}(u)\right)^2}
\end{align*}
Summing over all $0 < i \le t$,
\begin{align*}
\epsilon 	&\le | \rho^{(t)}(u) - \rho^{(0)}(u) | = \frac{1}{\pi(u)} | p^{(t)}(u) - p^{(0)}(u) | = \frac{1}{\pi(u)} \left| \sum_{i=1}^t (p^{(i)}(u) - p^{(i-1)}(u)) \right| \\
		&\le \frac{\alpha_u}{4 \pi(u) m} \sum_{i=1}^t \sqrt{d_u \sum_{v \sim_i u}\left( \rho^{(i-1)}(v) -  \rho^{(i-1)}(u)\right)^2} \\ 
		&\le  \frac{1}{2} \sqrt{\frac{t}{d_u} \cdot \sum_{i=1}^t \sum_{v \sim_i u} \left( \rho^{(i-1)}(v) -  \rho^{(i-1)}(u)\right)^2}
\end{align*} 
Therefore, by \eq{mihai},
\begin{align*}
\var_{\pi}{\rho^{(0)}} - \var_{\pi}{\rho^{(t)}} &= \sum_{i=1}^t \left(\var_{\pi}{\rho^{(i-1)}} - \var_{\pi}{\rho^{(i)}}\right)\\ 
				&\ge \sum_{i=1}^t \calE_{P^{(i)}}(\rho^{(i-1)},\rho^{(i-1)}) \\
				&\ge \sum_{i=1}^t \sum_{v \sim_i u}  \frac{\pi(u)}{2d_u} \left( \rho^{(i-1)}(u) - \rho^{(i-1)}(v) \right)^2 \\
				&\ge \frac{\alpha_u}{4m} \sum_{i=1}^t \sum_{v \sim_i u} \left( \rho^{(i-1)}(u) - \rho^{(i-1)}(v) \right)^2 \\
				&\ge \frac{\eps^2 \alpha_u d_u}{mt} \\ 
				&= \frac{2\eps^2 \pi(u)}{t}.
\end{align*}
\end{proof}

\begin{theorem}[\thmref{average} (restated)]
Given a time interval of length $t$ labelled $[1,t]$, let $\overline{P} =\frac{1}{t}(P^{(1)} + P^{(2)} + \cdots + P^{(t)})$ with spectral gap $\lambda(\overline{P})$. Then, for any initial probability distribution $p^{(0)}$ with likelihood $\rho^{(0)} = p^{(0)}/\pi$, it holds that
\[
	\var \rho^{(0)}  - \var \rho^{(t)} \ge \frac{1}{15t} \mathcal{E}_{\overline{P}}(\rho^{(0)},\rho^{(0)}) \ge \frac{\lambda(\overline{P})}{15t}.
\]
\end{theorem}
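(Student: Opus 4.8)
The plan is to split the vertex set into a ``moved'' part $U$ and a ``stable'' part $V \setminus U$, apply \lemref{imp} on $U$, and bound the contribution of $V \setminus U$ directly by its contribution to $\mathcal{E}_{\overline{P}}(\rho^{(0)},\rho^{(0)})$. Concretely, fix a threshold $\eps$ to be chosen later (some constant multiple of $\|\rho^{(0)}-1\|_{2,\pi}$ divided by a suitable power of $t$, or more simply a parameter optimized at the end) and set $U = \{u \in V \colon |\rho^{(i)}(u) - \rho^{(0)}(u)| \ge \eps \text{ for some } 1 \le i \le t\}$. For $u \in U$, \lemref{imp} gives $\var_{\pi}\rho^{(0)} - \var_{\pi}\rho^{(t)} \ge 2\eps^2 \pi(u)/t$; since this holds with the \emph{same} left-hand side for every such $u$, we cannot simply sum, so instead I would use the finer inequality in \lemref{imp}, namely $\var_{\pi}\rho^{(0)} - \var_{\pi}\rho^{(t)} \ge \frac{\alpha_u}{4m}\sum_{i=1}^t\sum_{v\sim_i u}(\rho^{(i-1)}(u)-\rho^{(i-1)}(v))^2$, summed over $u \in U$ (up to the factor-$2$ double counting of each edge), to get $\var_{\pi}\rho^{(0)} - \var_{\pi}\rho^{(t)} \gtrsim \frac{1}{t}\sum_{u \in U}\eps^2\pi(u) = \frac{\eps^2}{t}\pi(U)$.

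For the stable part, note that for $u \notin U$ we have $|\rho^{(i)}(u) - \rho^{(0)}(u)| < \eps$ for all $i \le t$, so along any edge $\{u,v\}$ present at step $i$ with \emph{both} endpoints outside $U$, the term $(\rho^{(i-1)}(u)-\rho^{(i-1)}(v))^2$ is within an additive $O(\eps)$-type perturbation of $(\rho^{(0)}(u)-\rho^{(0)}(v))^2$; summing over all $i$ and all such edges and dividing by $t$ reconstructs (a fraction of) $\mathcal{E}_{\overline{P}}(\rho^{(0)},\rho^{(0)})$ restricted to edges internal to $V\setminus U$. Combined with \eq{mihai} (i.e. the per-step variance drop dominates $\mathcal{E}_{P^{(i)}}$), this yields $\var_{\pi}\rho^{(0)} - \var_{\pi}\rho^{(t)} \gtrsim \frac{1}{t}\bigl(\mathcal{E}_{\overline{P}}(\rho^{(0)},\rho^{(0)}) - (\text{contribution of edges touching } U) - (\text{error term in }\eps)\bigr)$. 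The edges touching $U$ contribute at most something controlled by $\pi(U)$ and $\|\rho^{(0)}\|_\infty$-type quantities, or more cleanly by $\Var_\pi\rho^{(0)}$ times the conductance, which is exactly what the first bound controls; so the two estimates can be balanced.

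Finally I would combine the two lower bounds: $\var_{\pi}\rho^{(0)} - \var_{\pi}\rho^{(t)}$ is at least a constant times $\max\{\frac{\eps^2\pi(U)}{t}, \frac{1}{t}(\mathcal{E}_{\overline{P}}(\rho^{(0)},\rho^{(0)}) - \text{err}(U,\eps))\}$ and choose $\eps$ so that the error/boundary terms are at most, say, half of $\mathcal{E}_{\overline{P}}(\rho^{(0)},\rho^{(0)})$ whenever $\pi(U)$ is small, while if $\pi(U)$ is large the first bound already dominates $\frac{1}{t}\mathcal{E}_{\overline{P}}$ since $\mathcal{E}_{\overline{P}}(\rho^{(0)},\rho^{(0)}) \le 2\Var_\pi\rho^{(0)} \lesssim \eps^{-2}\Var_\pi\rho^{(0)}$... here the precise bookkeeping to land the clean constant $\tfrac{1}{15}$ is the fiddly part. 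The second inequality $\mathcal{E}_{\overline{P}}(\rho^{(0)},\rho^{(0)}) \ge \lambda(\overline{P})$ is immediate from the definition of the spectral gap together with $\Var_\pi\rho^{(0)} \ge 1$ (which we may assume, since otherwise the random walk is already mixed and there is nothing to prove). The main obstacle I anticipate is precisely the middle step: controlling the edges with exactly one endpoint in $U$ and the $\eps$-perturbation of the Dirichlet form on $V \setminus U$ simultaneously, i.e. showing the ``leakage'' from replacing $\rho^{(i-1)}$ by $\rho^{(0)}$ on stable edges, and the boundary edges at $U$, are both absorbed — this is where the Cauchy--Schwarz application (and the resulting factor of $t$, whose tightness the authors flag as open) enters.
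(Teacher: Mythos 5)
Your high-level plan is the same as the paper's: split $V$ into a set $U$ of ``moved'' vertices and its complement, invoke \lemref{imp} on $U$, and on $V\setminus U$ carry the per-step Dirichlet contribution back to $\mathcal{E}_{\overline P}(\rho^{(0)},\rho^{(0)})$ via a perturbative inequality of the form $(a+b+c)^2\gtrsim b^2-a^2-c^2$. You also correctly note that the coarse bound in \lemref{imp} cannot be summed over $u$ and that one must instead distribute the per-step Dirichlet form over vertices (and you get the second inequality right modulo the caveat that one must have $\Var_\pi\rho^{(0)}\ge 1$, which the paper leaves implicit).

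The genuine gap is your choice of a \emph{uniform} threshold $\eps$ in the definition of $U$. The balancing you sketch at the end does not close: to control the replacement error $\sum_{u\notin U}\sum_i\sum_{v\sim_i u}\alpha_u(\rho^{(0)}(u)-\rho^{(i-1)}(u))^2$ you need $\eps^2$ small in absolute terms (roughly $\eps^2\lesssim\mathcal{E}_{\overline P}(\rho^{(0)},\rho^{(0)})$, since the error is of order $t\eps^2$ while the good term is of order $t\,\mathcal{E}_{\overline P}$), but then the $U$-contribution, which is $\gtrsim\eps^2\pi(U)/t$, only yields $\mathcal{E}_{\overline P}\,\pi(U)/t$ --- and $\pi(U)$ can be arbitrarily small. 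Concretely, if nearly all of $\mathcal{E}_{\overline P}$ is carried by one vertex $u^\ast$ of tiny $\pi$-mass which is the only vertex whose $\rho$-value moves, a uniform $\eps$ gives you no way to recover $\mathcal{E}_{\overline P}/t$ from $\pi(\{u^\ast\})\cdot\eps^2/t$. The appeal to $\mathcal{E}_{\overline P}\le 2\Var_\pi\rho^{(0)}$ in the ``$\pi(U)$ large'' branch does not help either, because when $\eps^2\lesssim\mathcal{E}_{\overline P}$ one still needs $\pi(U)=\Omega(1)$.

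The missing idea is to make the threshold \emph{vertex-dependent}: define $\mathcal{E}_u := \frac{1}{4td_u}\sum_{i=1}^t\sum_{v\sim_i u}(\rho^{(0)}(u)-\rho^{(0)}(v))^2$ (the $\pi(u)$-normalised contribution of $u$ to $\mathcal{E}_{\overline P}(\rho^{(0)},\rho^{(0)})$) and set
\[
U=\bigl\{u\in V:\ \exists\,1\le i\le t\ \text{with}\ |\rho^{(i)}(u)-\rho^{(0)}(u)|>\sqrt{\mathcal{E}_u/30}\bigr\}.
\]
Then, for $u\in U$, \lemref{imp} gives a per-vertex gain of at least $\tfrac{1}{15t}\mathcal{E}_u\pi(u)$ --- which is precisely (a fixed fraction of) $u$'s share of $\mathcal{E}_{\overline P}$ --- so the $U$-side always reproduces its own part of $\mathcal{E}_{\overline P}/(15t)$, regardless of how small $\pi(U)$ is. And for $u\notin U$, the replacement error $(\rho^{(0)}(u)-\rho^{(i-1)}(u))^2\le\mathcal{E}_u/30$ is, summed over $i$ and $v\sim_i u$, automatically dominated by a small multiple of $u$'s own good term $\sum_i\sum_{v\sim_i u}(\rho^{(0)}(u)-\rho^{(0)}(v))^2$, so no extrinsic comparison with $\mathcal{E}_{\overline P}$ is needed. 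This self-calibrated threshold is what makes the two halves add up cleanly to $\mathcal{E}_{\overline P}(\rho^{(0)},\rho^{(0)})/(15t)$; it is not merely bookkeeping but the crux of the argument, and without it the uniform-threshold version does not give the stated bound.
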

\begin{proof}
Recall that 
\[
\mathcal{E}_{\overline{P}}(\rho^{(0)},\rho^{(0)}) = \frac{1}{t} \sum_{i=1}^t \mathcal{E}_{P^{(i)}} (\rho^{(0)},\rho^{(0)}) 
		= \frac{1}{4  m  t} \sum_{i=1}^t \sum_{u \in V} \sum_{v \sim_i u} \alpha_u \left(\rho^{(0)}(u) - \rho^{(0)}(v) \right)^2.
\]
Denote with $\mathcal{E}_u$ the contribution given by $u \in V$ to $\mathcal{E}_{\overline{P}}(\rho^{(0)},\rho^{(0)})$ normalised by $\pi(u)$, i.e.,
\[
\mathcal{E}_u \triangleq \frac{1}{4 t d_u} \sum_{i=1}^t \sum_{v \sim_i u} \left(\rho^{(0)}(u) - \rho^{(0)}(v) \right)^2.
\]
Let $U \in V$ the subset of vertices $u$ for which there exists an $1 \le i \le t$ such that $\left(\rho^{(i)}(u) - \rho^{(0)}(u) \right)^2$ is greater than $\mathcal{E}_u/30$:
\[
U \triangleq \{ u \in V \colon \exists \, 1 \le i \le t \text{ s.t. } |\rho^{(i)}(u) - \rho^{(0)}(u)| > \sqrt{\mathcal{E}_u/30} \}.
\]

We consider the contribution to $\var_{\pi}{\rho^{(0)}} - \var_{\pi}{\rho^{(t)}}$ given by vertices that belong to $U$ and vertices that do not separately:
\begin{align}
\var_{\pi}{\rho^{(0)}} - \var_{\pi}{\rho^{(t)}} \ge & \frac{1}{4m} \sum_{u \in U} \sum_{i=1}^t  
				\sum_{v \sim_i u} \alpha_u \left( \rho^{(i-1)}(u) - \rho^{(i-1)}(v) \right)^2 \nonumber \\ 			
	&+ \frac{1}{4m} \sum_{u \not\in U} \sum_{i=1}^t  \sum_{v \sim_i u} \alpha_u \left( \rho^{(i-1)}(u) - \rho^{(i-1)}(v) \right)^2. \label{eq:sum0}
\end{align}

We begin with the first part of the sum. By \lemref{imp}, the definition of $U$, and $\pi(u) = \alpha_u d_u/(2m)$, we have that
\begin{align}
\frac{1}{4m} \sum_{u \in U} \sum_{i=1}^t  \sum_{v \sim_i u} &\alpha_u \left( \rho^{(i-1)}(u) - \rho^{(i-1)}(v) \right)^2 \nonumber\\
			&\ge \frac{1}{15t} \sum_{u \in U} \mathcal{E}_u \cdot  \pi(u) \nonumber\\
			&= \frac{1}{15 \cdot 4 m \cdot t^2} \sum_{u \in U} \sum_{i=1}^t \sum_{v \sim_i u} \alpha_u \left(\rho^{(0)}(u) - \rho^{(0)}(v) \right)^2. \label{eq:sum1}
\end{align}

We now look at the second part of the sum. Using the inequality $(a+b+c)^2 \ge a^2/2 - 6b^2 - 6c^2$ we obtain that
\begin{align*}
\frac{1}{4m} \sum_{u \not\in U} &\sum_{i=1}^t  \sum_{v \sim_i u} \alpha_u \left( \rho^{(i-1)}(u) - \rho^{(i-1)}(v) \right)^2 \\
	&\ge \frac{1}{4m} \sum_{u \not\in U} \sum_{i=1}^t  \sum_{v \sim_i u}
	\alpha_u \left( \rho^{(i-1)}(u) - \rho^{(0)}(u) + \rho^{(0)}(u) - \rho^{(0)}(v) + \rho^{(0)}(v) - \rho^{(i-1)}(v) \right)^2 \\
	&\ge \frac{1}{8m} \sum_{u \not\in U} \sum_{i=1}^t  \sum_{v \sim_i u}
			\alpha_u \left(\rho^{(0)}(u) - \rho^{(0)}(v) \right)^2 
			- \frac{3}{m} \sum_{u \not\in U} \sum_{i=1}^t  \sum_{v \sim_i u}
	\alpha_u \left(\rho^{(0)}(u) - \rho^{(i-1)}(u) \right)^2.
\end{align*}
By the definition of $U$, it follows that 
\begin{align*}
\frac{3}{m} \sum_{u \not\in U} \sum_{i=1}^t  \sum_{v \sim_i u} \alpha_u \left(\rho^{(0)}(u) - \rho^{(i-1)}(u) \right)^2 
	&\le \frac{1}{10m} \sum_{u \not\in U} \sum_{i=1}^t  \sum_{v \sim_i u} \alpha_u \mathcal{E}_u\\
	&=\frac{1}{10 m} \sum_{u \not\in U} \sum_{i=1}^t  \sum_{v \sim_i u} \alpha_u \left(\rho^{(0)}(u) - \rho^{(0)}(v) \right)^2.
\end{align*}
Putting together the last two sums, we have that
\begin{align}
\frac{1}{4m} \sum_{u \not\in U} \sum_{i=1}^t  \sum_{v \sim_i u} &\alpha_u \left( \rho^{(i-1)}(u) - \rho^{(i-1)}(v) \right)^2 \ge
\frac{1}{40m} \cdot \sum_{u \not\in U} \sum_{i=1}^t  \sum_{v \sim_i u} \alpha_u \left( \rho^{(0)}(u) - \rho^{(0)}(v) \right)^2. \label{eq:sum2}
\end{align}

Finally, combining \eq{sum0} with \eq{sum1} and \eq{sum2}, it follows that
\begin{align*}
\var_{\pi}{\rho^{(0)}} - \var_{\pi}{\rho^{(t)}} 
		&\ge 	\frac{1}{15 \cdot 4m \cdot t^2} \sum_{u \in U} \sum_{i=1}^t \sum_{v \sim_i u} \alpha_u \left(\rho^{(0)}(u) - \rho^{(0)}(v) \right)^2 \\
		&\qquad + \frac{1}{40m} \cdot \sum_{u \not\in U} \sum_{i=1}^t  \sum_{v \sim_i u} \alpha_u \left( \rho^{(0)}(u) - \rho^{(0)}(v) \right)^2 \\
		&\ge \frac{1}{15 \cdot 4m \cdot t^2} \sum_{u \in V} \sum_{i=1}^t \sum_{v \sim_i u} \alpha_u \left(\rho^{(0)}(u) - \rho^{(0)}(v) \right)^2 \\
		&= \frac{1}{15t} \mathcal{E}_{\overline{P}}(\rho^{(0)},\rho^{(0)}) \\
		&\ge \frac{\lambda(\overline{P})}{15t},
\end{align*}
where the last inequality follows from the relation between the Dirichlet form and the spectral gap of a transition matrix.
\end{proof}

\begin{proposition}[Proposition \ref{pro:nomixing} (restated)]
For any $t=\omega(\log n)$, there is a sequence of connected $n$-vertex bounded-degree expander graphs $\calG =\{G^{(i)}\}_{i=1}^{\infty}$ and a constant $c > 0$ so that $p_{u,v}^{(t)} \geq n^{-1+c}$ for some vertices $u$ and $v$.
%two vertices $u$ and $v$ such that, for some constant $c_1 \geq 0$ and any time-step $t \geq c_1 \cdot \log n$,  $p_{u,v}^{[0,t]} \geq n^{-1+c_2}$ for some constant $c_2=c_2(c_1)>0$. 
\end{proposition}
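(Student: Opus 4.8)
Here is how I would approach Proposition~\ref{pro:nomixing}.

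\textbf{Reduction.} The plan is to exhibit, for the given $t=\omega(\log n)$, a sequence of connected bounded-degree expanders on which the lazy walk started at some vertex $u$ has, at time $t$, an $\Omega(1)$ fraction of its probability mass on a set $S$ with $|S|\le n^{1-c_0}$ for an absolute constant $c_0\in(0,1)$. Once this is done, some $v\in S$ satisfies $p_{u,v}^{(t)}\ge\Omega(1/|S|)\ge\Omega(n^{-1+c_0})$, which gives the statement with any $c$ slightly below $c_0$. So the whole problem becomes: build a dynamic expander sequence that \emph{compresses} the walk onto a polynomially small set at time $t$.

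\textbf{The construction (an iterative funnel).} I would fix a nested family $V=S_0\supset S_1\supset\dots\supset S_k$ with $|S_j|=n/2^j$ and $k=\lfloor c_0\log_2 n\rfloor$, so $|S_k|=n^{1-c_0}$, together with a large absolute constant $\Delta$. The sequence consists of: first $t-O(\log n)$ copies of a fixed $\Delta$-bounded-degree expander $G_0$ on $V$ (which drives the walk to near-uniform, and fits since $t=\omega(\log n)$); then $k$ consecutive \emph{rounds}, round $j$ being $T_0=O(1)$ copies of a graph $H_j$; then copies of $G_0$ again. In $H_j$: the set $S_j$ carries a $\Delta$-regular expander; each vertex of $S_{j-1}\setminus S_j$ is a degree-$1$ ``leaf'' matched to a distinct vertex of $S_j$ (possible as $|S_{j-1}\setminus S_j|=|S_j|$); the set $V\setminus S_{j-1}$ carries a bounded-degree expander; and $\Theta(|S_{j-1}|)$ further edges join $S_j$ to $V\setminus S_{j-1}$, all endpoints of bounded degree, so that $H_j$ is connected and has conductance bounded below by a fixed small constant $\epsilon_0>0$. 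Note the leaves keep degree exactly $1$, and the edge boundary of $S_{j-1}$ in $H_j$ is precisely those $\Theta(|S_{j-1}|)$ interconnecting edges.

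\textbf{The analysis.} Everything rests on the elementary draining identity for a degree-$1$ leaf $\ell$ joined to a hub $h$: $p^{(i+1)}(\ell)=\tfrac12 p^{(i)}(\ell)+\tfrac{1}{2d_h}p^{(i)}(\ell\text{'s only neighbour})$, so after $T_0=O(1)$ steps $\ell$ retains only an $O(2^{-T_0}+1/\Delta)$ fraction of its mass, the rest having moved onto $S_j$ (onto $h$ and its $S_j$-neighbours), except for an $O(1/\Delta)$ fraction that returns to $\ell$ and an $O(\epsilon_0 T_0)$ fraction leaking across the cut $(S_{j-1},V\setminus S_{j-1})$. Choosing $\Delta$ a large enough constant and $T_0$ appropriately, one round therefore sends any distribution with mass $M$ on $S_{j-1}$ to one with mass at least $\theta M$ on $S_j$, for an absolute constant $\theta>1/2$ --- and crucially only the \emph{total} mass on $S_j$ needs to be tracked, so the (non-uniform) shape produced on $S_j$ is irrelevant for the next round. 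Since the distribution entering round $1$ is near-uniform, round $1$ leaves $\Omega(1)$ mass on $S_1$, and iterating yields mass at least $\theta^{k-1}\cdot\Omega(1)=\Omega(n^{-c_0'})$ on $S_k$ with $c_0'=c_0\log_2(1/\theta)<c_0$; averaging over $S_k$ completes the argument. Because the funnel occupies $kT_0=O(\log n)$ steps, it fits into the last $O(\log n)$ steps of any $t=\omega(\log n)$.

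\textbf{Main obstacle.} The hard point is the tension between ``every $H_j$ is a genuine connected bounded-degree expander'' and ``the walk stays concentrated'': expansion forces the cut $(S_{j-1},V\setminus S_{j-1})$ to have conductance $\Omega(1)$, so the concentrated mass bleeds out at a constant per-step rate --- which is exactly why the rounds must be of $O(1)$ length and why $\epsilon_0$ must be kept small (but fixed) so that $\theta>1/2$ survives. The remaining, more technical, task is to verify that each $H_j$ --- an expander on $S_j$, a matching to the leaves, an expander on $V\setminus S_{j-1}$, and the $\Theta(|S_{j-1}|)$ interconnecting edges --- has conductance bounded below by a universal constant for \emph{all} $j\le k$; this I would check cut class by cut class (cuts inside $S_j$ handled by its expansion; cuts severing sets of leaves, each of conductance $\tfrac12$; and cuts crossing the thick cut $(S_{j-1},V\setminus S_{j-1})$), being careful that the bound is uniform in $j$.
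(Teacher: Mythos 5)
Your proposal is fundamentally the same as the paper's: pad with expanders, then run a $\Theta(\log n)$-step ``funnel'' in which a nested family $S_0\supset S_1\supset\cdots$ shrinks geometrically while degree-deficient leaves drain mass onto the smaller set, and finish by averaging over the small set. The difference is in the implementation and analysis of the funnel. The paper shrinks by a factor $10$ per \emph{step} with $6$ leaves per hub and runs a one-line \emph{per-vertex} induction $p^{(i)}(u)\ge(10/8)^i/n$ for $u\in S_i$; because this is a lower bound on incoming mass, it never needs to quantify how much mass leaks out of $S_i$ into the rest of the graph. You shrink by a factor $2$ over rounds of $T_0=O(1)$ steps with one leaf per hub and track \emph{total mass} on $S_j$, which forces real bookkeeping: you must show that reflux from hubs to leaves and outflow across $\partial S_{j-1}$ are both $O(T_0/\Delta)$ fractions of the mass (while influx from outside only helps the lower bound and can be ignored). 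I believe that bookkeeping can be pushed through with a large enough constant $\Delta$, but it is considerably heavier than the paper's argument, and you correctly flag it as the nontrivial part. Two smaller points. First, your opening reduction claims an $\Omega(1)$ mass on a set of size $n^{1-c_0}$, while your own analysis delivers only $\Omega(n^{-c_0'})$ mass with $c_0'<c_0$; this is still enough after averaging (giving $n^{-1+(c_0-c_0')}$), but the reduction as stated is stronger than what you prove. Second, the padding expanders should be chosen regular so that the walk actually converges to the uniform distribution before the funnel starts; the paper sidesteps this by setting $p^{(0)}$ uniform (which $3$-regular expanders preserve exactly) and averaging over starting vertices at the very end.
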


\begin{proof}
Assume that $p^{(0)}=(1/n,\ldots,1/n)$ is the initial distribution, and $p^{(i)}=p P^{(1)} P^{(2)} \cdots P^{(i)}$ will be the resulting distribution after $i$ steps. Then select a sets of size $S_1 \subseteq S_0=V$ of size $|S_1|=|S_0|/10$ and connect every vertex in $S_1$ to $6$ other vertices in $S_0 \setminus S_1$ in a way so that every vertex in $S_0 \setminus S_1$ has degree at most $1$ (so far). Then add a $3$-regular expander graph to all vertices $V$ to ensure that the resulting graph is a bounded-degree (but non-regular) expander. Note that for any vertex $u \in S_1$,
\[
 p^{(1)}(u) \geq \frac{1}{2} p^{(1)}(u) + \sum_{v \in S_1, v \sim_{1} u}  \frac{1}{8} \cdot p^{(0)}(v) \geq \frac{10}{8} \cdot \frac{1}{n},
\]
since every vertex $v \in S_1, v \sim_{1} u$ has degree $1+3=4$. 

Now proceed by induction. So assume the existence of a subset $S_i$ such that $p^{(i)}(u) \geq (10/8)^i \cdot \frac{1}{n}$ and $|S_i| \geq 10^{-i} \cdot n$. Then again, we choose an arbitrary set $S_{i+1} \subseteq S_i$ of size $|S_{i+1}| \geq |S_i|/10$ and proceed with the same construction as above, replacing $S_0$ by $S_i$ and $S_1$ by $S_{i+1}$. With this substitution we conclude
\[
 p^{(i)}(u) \geq \frac{1}{2} p^{(i)}(u) + \sum_{v \in S_i, v \sim_{i} u}  \frac{1}{8} \cdot p^{(i-1)}(v) \geq \frac{10}{8} \cdot \left( \frac{10}{8} \right)^{i-1} \cdot \frac{1}{n} = \left( \frac{10}{8} \right)^{t} \cdot \frac{1}{n}.
\]
Thus we can proceed inductively as long as $|S_i| \geq 1$, which holds as long as $i \leq c' \cdot \log n$ for some constant $c' > 0$. To obtain the statement of theorem for values of $t=\Omega(\log n)$, simply pad $t-c \log n$ many $3$-regular expander graphs to the beginning of the sequence $\calG$, which will leave the probability distribution unchanged until step $t-c' \log n$. Thus for $p^{(0)}$ being the uniform distribution, we can infer that $p_{u,v}^{(t)} \geq n^{-1+c}$ for $c > 0$ being a constant, $t=\omega(\log n)$ and proper vertices $u,v \in V$.
\end{proof}

\begin{proposition}[Proposition \ref{pro:nohitting} (restated)]
There is a sequence of $n$-vertex  bounded-degree graphs $\calG =\{G^{(i)}\}_{i=1}^{\infty}$ with transition matrices $\{P^{(i)}\}_{i=1}^{\infty}$  and a probability distribution $\pi$ such that (1) for any $i$, $\pi$ is stationary for $P^{(i)}$; (2) the average transition matrix $\overline{P}$ of any $4n$ consecutive steps is ergodic; (3) for any $t \ge 0$ there are two vertices $u,v$ such that $p_{u,v}^{[0,t]} \leq 2^{-(n/4)-2}$. Moreover, $\tmix(\calG) = O(\poly(n))$, while $\thit(\calG) = 2^{\Omega(n)}$. There is also a sequence $\calG'$ satisfying (1), (2), and (3) such that $\tmix(\calG) = 2^{\Omega(n)}$.
\end{proposition}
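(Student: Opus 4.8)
The plan is to build, out of disconnected bipartite graphs, a ``geometric funnel'' whose tip carries exponentially small stationary mass. Fix $k=\Theta(n)$ and vertices $a_1,\dots,a_k$, relays $b_1,\dots,b_{k-1}$, and pendants $b_1',\dots,b_{k-1}'$. Let $G^{(i)}$ for odd $i$ be the disjoint union of the length-two paths $b_j-a_j-b_j'$ (so $a_j$ has degree $2$ and $b_j,b_j'$ degree $1$), and for even $i$ the disjoint union of the single edges $\{b_j,a_{j+1}\}$; both are bipartite and disconnected, on the same vertex set, with all degrees $O(1)$. Although $\pi$ is a single fixed vector, it only has to be stationary for each individual $P^{(i)}$, and because each $G^{(i)}$ is disconnected the $\pi$-mass of each component is independently scalable; detailed balance then forces $\pi(a_j)=2\pi(b_j)$ at odd times and $\pi(b_j)=\pi(a_{j+1})$ at even times, hence $\pi(a_{j+1})=\pi(a_j)/2$ and $\pi(a_j)=\pi(a_1)2^{-(j-1)}$. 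Since the geometric masses sum to a constant, $\pi(a_1)=\Theta(1)$, so choosing $k$ slightly larger than $n/4$ gives $\pi_*=\pi(a_k)\le 2^{-n/4-2}$; this is property~(1). Property~(2) is immediate: the union of any two consecutive graphs is the whole funnel-path with its pendants, which is connected, and laziness makes $\overline{P}^{[i,i+t]}$ aperiodic, hence ergodic, for every window $t\ge 2$ and in particular for $t=4n$.

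For property~(3) and the exponential hitting time I would use a ``reversed-chain'' identity: since each $P^{(i)}$ is reversible with respect to $\pi$, multiplying the detailed-balance relations along a path and telescoping gives, for all $u,v$ and all $t$,
\[
 p^{[0,t]}_{u,v}=\frac{\pi(v)}{\pi(u)}\cdot \mathbb{P}_v\!\left[\text{the walk under }P^{(t)},P^{(t-1)},\dots,P^{(1)}\text{ ends at }u\right]\le \frac{\pi(v)}{\pi(u)}.
\]
Taking $u=a_1$ and $v=a_k$ yields $p^{[0,t]}_{a_1,a_k}\le \pi(a_k)/\pi(a_1)\le 2^{-n/4-2}$ for every $t\ge 0$, which is property~(3). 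Consequently the probability that the walk from $a_1$ visits $a_k$ within $T$ steps is at most $\sum_{t=0}^{T}p^{[0,t]}_{a_1,a_k}\le (T+1)2^{-n/4-2}$, so $\thit(\calG)\ge\tau_{a_1,a_k}=2^{\Omega(n)}$.

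That $\tmix(\calG)=O(\poly(n))$ follows from \corref{average} with an $O(1)$ window: there $\overline{P}=\tfrac12(P^{\mathrm{odd}}+P^{\mathrm{even}})$ is a lazy reversible chain whose support graph is the connected funnel-graph, so $\lambda(\overline{P})\ge\Omega(1/n^2)$ by the standard canonical-path bound (in fact $\Theta(1)$, by the strong geometric drift), while $\log(1/\pi_*)=\Theta(n)$; hence $\tmix(\calG)=O\!\left(t^2\log(1/\pi_*)/\lambda(\overline{P})\right)=O(n^3)$. For the sequence $\calG'$ I would keep a deep funnel on a constant fraction of the vertices (still reaching $\pi$-value $\le 2^{-n/4-2}$ at its tip, so that (1)--(3) and the hitting-time bound hold verbatim by the same reversed-chain estimate) and, on the remaining $\Theta(n)$ vertices, add a disjoint ``bottleneck gadget'': two further funnels, each of linear but small depth, joined at their narrow tips $y_1\sim y_2$ by a single edge, and attached to the deep funnel's bulk so the union stays connected. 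The cut $S$ separating the two halves of the gadget then has $\pi(S),\pi(S^c)=\Theta(1)$ but $\overline{P}$-conductance $2^{-\Omega(n)}$, since its only crossing edge is $\{y_1,y_2\}$ and $\pi(y_1)=2^{-\Omega(n)}$. Starting inside $S$: once the walk has equilibrated within $S$ (which by \corref{average} applied to $S$ takes only $\poly(n)$ steps), the mass crossing the cut per step is at most $p^{(t)}(y_1)\cdot O(1)\le (\pi(y_1)/\pi(S))\cdot O(1)=2^{-\Omega(n)}$, so $p^{(t)}(S^c)\le 1/4$ for all $t\le 2^{\Omega(n)}$; by Cauchy--Schwarz $\|\rho^{(t)}-1\|_{2,\pi}^2\ge (p^{(t)}(S^c)-\pi(S^c))^2/\pi(S^c)>1/9$ for all such $t$, i.e.\ $\tmix(\calG')=2^{\Omega(n)}$.

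The main obstacle is the combinatorial bookkeeping: arranging a \emph{single}, exponentially imbalanced $\pi$ that is simultaneously stationary for every (disconnected, rapidly changing) $P^{(i)}$ while keeping all $G^{(i)}$ bipartite, of bounded degree, and on exactly $n$ vertices --- and, for $\calG'$, doing this within a vertex budget that must host both a funnel deep enough for property~(3) (at a constant rate of decay per vertex this already costs $\Omega(n)$ vertices to reach $2^{-n/4}$) and a separate conductance-$2^{-\Omega(n)}$ bottleneck (two more funnels of linear depth, joined at tips of equal stationary mass). The remaining ingredients --- the reversed-chain estimate, the elementary spectral-gap and conductance bounds, and the two applications of \corref{average} --- are routine.
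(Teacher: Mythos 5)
Your proposal is correct and rests on the same core idea as the paper --- constructing a ``funnel'' of disconnected bipartite graphs whose alternating degrees force an exponentially decaying stationary distribution --- but the details differ in ways worth noting. Where the paper partitions $V$ into $n/4$ buckets of size $4$ and cycles through the $\binom{4}{2}=6$ complete bipartite configurations between consecutive buckets, you use a path backbone $a_1,\dots,a_k$ with relays $b_j$ and pendants $b_j'$ and only two alternating graphs; both give a valid $\pi$ with $\pi_*=2^{-\Theta(n)}$. Your argument for property~(3) and $\thit=2^{\Omega(n)}$ is cleaner than what is written in the paper: the time-reversal identity
$p^{[0,t]}_{u,v}=\bigl(\pi(v)/\pi(u)\bigr)\,(P^{(t)}\cdots P^{(1)})(v,u)\le \pi(v)/\pi(u)$ (valid whenever every $P^{(i)}$ is reversible w.r.t.\ the same $\pi$) gives the needed bound uniformly in $t$ without case analysis, whereas the paper merely asserts that the exponential hitting time ``follows trivially.'' For $\calG'$ the paper takes a considerably simpler route than yours: it duplicates the vertex set, keeps two independent copies of $\calG$ running, and every $3n+1$ steps adds a perfect matching between the two copies' low-mass buckets, so the cut between the copies has $\pi$-weight $\Theta(1)$ but conductance $2^{-\Omega(n)}$. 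Your bottleneck gadget --- two sub-funnels glued at tips of equal (exponentially small) mass --- achieves the same exponential conductance, but involves the delicate ``combinatorial bookkeeping'' you yourself flag: fitting a depth-$\Omega(n)$ funnel for property~(3) \emph{and} two more near-linear-depth funnels for the bottleneck within the $n$-vertex budget, while keeping a single $\pi$ stationary for every $P^{(i)}$; the doubling trick sidesteps this by reusing one funnel twice. If you adopt the doubling trick, the rest of your argument (mass-flux bound across the cut, then $\|\rho^{(t)}-1\|_{2,\pi}$ bounded below until $t=2^{\Omega(n)}$) goes through unchanged.
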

\begin{proof}
W.l.o.g.~let $n$ be a multiple $4$.
Let $V$ be the vertex set of $\calG$. Divide $V$ into $n/4$ buckets of size $4$ each, and label the buckets $V_1,V_2,\ldots,V_{k}$ with $k=n/4$. In the first $6$ steps, the graphs $G^{(1)},G^{(2)},\ldots,G^{(6)}$ will only have edges between $V_1$ and $V_2$, and in the next $6$ steps, there will be only edges between $V_2$ and $V_3$ and so on. Finally we apply the same scheme periodically, but in reserved order i.e., $G^{(6(n/4)+i)}=G^{(6(n/4)+1-i)}$ for any $1 \leq i \leq 6(n/4)$. From then on we repeat the same sequence, i.e.,
$G^{(12(n/4)+i)}=G^{(i)}$ for any $i \geq 1$.

In each $G^{(1)}$, with $1 \leq i \leq 6$, we will form a complete bipartite graph between two vertices in $V_1$ and (all) four vertices in $V_2$. The number of different bipartite subgraphs is $\binom{4}{2}=6$, so each $G^{(1)}$ will correspond to exactly one complete bipartite subgraph. 

Note that by construction and the fact that we reversed the order in the second period, it is clear that the union of the graphs $G^{(1)},G^{(2)},\ldots,G_{12(n/4)+1}$ is connected, and since we are considering lazy random walks, the average transition matrix $\overline{P}$ of such sequence of graphs will be ergodic.

Let us now verify that $\pi$ with $\pi(u) = 2^{-i-2}$ for any $u \in V_i$ is a stationary distribution, i.e., does not change over time when applied to the graph sequence. To this end, we simply note that the stationary distribution in a complete bipartite graph is proportional to the respective degree, and since the degrees of $V_i$ and $V_{i+1}$ differ by a factor of two, it follows that every $G^{(1)}$ will keep the distribution unchanged. Therefore $\pi$ is the unique stationary distribution of $\overline{P}$. The bound $\thit(\calG) = 2^{\Omega(n)}$ follows trivially. To bound $\tmix$, notice that $\Phi_{P} = \Omega(1)$, and $\tmix(\calG) = O(\poly(n))$ follows from \thmref{average}.

To obtain a sequence with exponential mixing and hitting times, we construct a sequence $\calG'$ with vertex set $V \cup V'$ where $V'$ is a copy of $V$. Any graph in $\calG'$ corresponds to two disjoint copies of the corresponding graph in $\calG$, but every $3n+1$ steps, we add a perfect matching between the vertices in $V_{n/4}$ and the corresponding vertices in $V_{n/4}'$. It is not difficult to see that  $\tmix(\calG') = 2^{\Omega(n)}$ (and, indeed, if we consider the average transition matrix $\overline{P}$ of  any consecutive $3n + 2$ steps, it will be ergodic but with $\Phi(\overline{P}) = 2^{-\Omega(n)}$).
\end{proof}

%%%%%%%%%%%%%%%%%%%%%%%%%%%%%%%%%%%%%%%%%%%%%%%%%%%%%%%%%%%%%%%%%%%%%%%%%%%%%%
%%%%%%%%%%%%%%%%%%%%%%%%%%%%%%%%%%%%%%%%%%%%%%%%%%%%%%%%%%%%%%%%%%%%%%%%%%%%%%
\section{Omitted proofs and details from \secref{cutsets}}
The following lemma borrows some ideas from the proof of Proposition 4.42 in the book by Aldous and Fill~\cite{aldousFill}.
\begin{lemma}[Lemma~\ref{lem:general} (restated)]
For any graph $G = (V,E)$ and $s,t \in V$, there exists a labelling of the vertices from $1$ to $n$ such that
\[
C_{st} \le 2m \sum_{j=1}^{n-1}\frac{1}{|\partial [j]|}.
\]
Furthermore, by considering the reversal of the labelling, we can also conclude that
$
C_{st} \le 4m \sum_{j=1}^{n/2} \frac{1}{|\partial [j]|}.
$
\end{lemma}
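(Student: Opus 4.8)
The plan is to use the variational characterization of the commute time from~\eqref{eq:var_hit}, namely $C_{st} = \max\{1/\calE_P(g,g) : 0 \le g \le 1, g(s)=0, g(t)=1\}$. To bound $C_{st}$ from above, I need to exhibit, for \emph{every} admissible $g$, a labelling of the vertices (depending on $g$) such that $1/\calE_P(g,g) \le 2m\sum_{j=1}^{n-1}|\partial[j]|^{-1}$; then taking the maximum over $g$ of the left side while keeping the labelling argument uniform will require care --- actually it is cleaner to fix the \emph{optimal} $g^*$ achieving the max, sort the vertices by the value of $g^*$, and show the inequality for that labelling. So first I would let $g = g^*$ be the maximizer, and relabel the vertices $1,\dots,n$ so that $g(1) \le g(2) \le \dots \le g(n)$; since $g(s)=0$ and $g(t)=1$ this is consistent with $s$ getting a small label and $t$ a large one (ties broken arbitrarily).

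\textbf{Key steps.} With this labelling, the Dirichlet form of the \emph{lazy} walk is $\calE_P(g,g) = \frac{1}{4m}\sum_{u\sim v}(g(u)-g(v))^2$. The idea (this is the ``telescoping across level sets'' trick from Aldous--Fill's Proposition 4.42) is to write each $g(u)-g(v)$ as a sum of increments $g(j+1)-g(j)$ over the labels $j$ strictly between the labels of $u$ and $v$, i.e. $g(v)-g(u) = \sum_{j} \mathbf{1}[\{u,v\}\in\partial[j]]\,(g(j+1)-g(j))$ when $u$ has the smaller label. By Cauchy--Schwarz, for a single edge $\{u,v\}$ with labels $a<b$, $(g(v)-g(u))^2 = \bigl(\sum_{j=a}^{b-1}(g(j+1)-g(j))\bigr)^2 \le (b-a)\sum_{j=a}^{b-1}(g(j+1)-g(j))^2$ --- but actually the sharper move is to introduce weights: write $g(v)-g(u) = \sum_{j} c_j^{1/2} \cdot c_j^{-1/2}(g(j+1)-g(j))$ and apply Cauchy--Schwarz with weights $c_j = |\partial[j]|$, so that summing over all edges the weight $\sum_{e \ni \text{across } j} 1$ collapses to exactly $|\partial[j]|$. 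Concretely:
\begin{align*}
4m\,\calE_P(g,g) &= \sum_{u\sim v}(g(u)-g(v))^2 = \sum_{u\sim v}\Bigl(\sum_{j:\{u,v\}\in\partial[j]} (g(j{+}1)-g(j))\Bigr)^2 \\
&\le \sum_{u\sim v}\Bigl(\sum_{j:\{u,v\}\in\partial[j]} |\partial[j]|\Bigr)\Bigl(\sum_{j:\{u,v\}\in\partial[j]} \frac{(g(j{+}1)-g(j))^2}{|\partial[j]|}\Bigr).
\end{align*}
The hard part will be controlling the first factor $\sum_{j:\{u,v\}\in\partial[j]} |\partial[j]|$: a priori this could be as large as $n$ times the max cut size. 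The resolution --- and the reason the labelling must be chosen adaptively rather than arbitrarily --- is to choose the labelling not merely by sorting $g$ but \emph{greedily/recursively}: at each step peel off the ``side'' with the smaller boundary, which guarantees that the cut sizes $|\partial[j]|$ are controlled along the telescoping path of any edge, so that the weighted sum telescopes correctly. Equivalently, one reorganizes the double sum by swapping the order of summation: $\sum_{u\sim v}\sum_{j:\{u,v\}\in\partial[j]}\frac{(g(j{+}1)-g(j))^2}{|\partial[j]|} = \sum_{j=1}^{n-1}\frac{(g(j{+}1)-g(j))^2}{|\partial[j]|}\cdot|\partial[j]| = \sum_{j=1}^{n-1}(g(j{+}1)-g(j))^2 \le \bigl(\sum_j (g(j{+}1)-g(j))\bigr)^2$-type bound, and the other factor, after the right choice of labelling, is bounded by $\sum_{j=1}^{n-1}|\partial[j]|$. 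Combining, $4m\,\calE_P(g,g) \le \bigl(\sum_{j}(g(j{+}1)-g(j))\bigr)\cdot\text{(something)}$; pushing this through with $g(n)-g(1) \ge g(t)-g(s) = 1$ yields $\calE_P(g,g) \ge \frac{1}{2m}\bigl(\sum_{j=1}^{n-1}|\partial[j]|^{-1}\bigr)^{-1}$, hence $C_{st} = 1/\calE_P(g,g) \le 2m\sum_{j=1}^{n-1}|\partial[j]|^{-1}$.

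\textbf{The main obstacle} is getting the Cauchy--Schwarz bookkeeping to land exactly on $\sum_j |\partial[j]|^{-1}$ rather than something weaker, and this is precisely where the \emph{existence} of a good labelling (as opposed to ``for all labellings'') is essential: one applies Cauchy--Schwarz in the form $1 = (g(n)-g(1))^2 = \bigl(\sum_{j=1}^{n-1}(g(j{+}1)-g(j))\bigr)^2 \le \bigl(\sum_{j=1}^{n-1}|\partial[j]|^{-1}\bigr)\bigl(\sum_{j=1}^{n-1}|\partial[j]|\,(g(j{+}1)-g(j))^2\bigr)$, so it remains to show $\sum_{j=1}^{n-1}|\partial[j]|\,(g(j{+}1)-g(j))^2 \le 4m\,\calE_P(g,g)$; this last inequality holds \emph{for the sorted labelling} because every edge crossing the cut $\partial[j]$ contributes $(g(\cdot)-g(\cdot))^2 \ge (g(j{+}1)-g(j))^2$ only if the labels of its endpoints are exactly $j$ and $j{+}1$ --- which is not automatic --- so in fact the clean statement is $\sum_j |\partial[j]|(g(j{+}1)-g(j))^2 \le \sum_{u \sim v}(g(u)-g(v))^2$, valid because for each $j$, each of the $|\partial[j]|$ edges across the cut spans an interval of labels containing $[j,j{+}1]$, hence has squared increment at least $(g(j{+}1)-g(j))^2$, and one charges the edge $\{u,v\}$ with labels $a<b$ to the single index $j$ minimizing $|\partial[j]|$ over $a \le j \le b-1$; summing this charge gives at most $\sum_{u\sim v}(g(u)-g(v))^2$ provided the labelling is chosen (greedily, peeling the smaller side) so that each edge is charged to a distinct-enough set of indices --- this matching/charging argument is the crux and the place I would spend the most care. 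Finally, the ``furthermore'' follows by symmetry: applying the bound to the reversed labelling $j \mapsto n+1-j$ gives $C_{st} \le 2m\sum_{j=1}^{n-1}|\partial[n-j]|^{-1} = 2m\sum_{j=1}^{n-1}|\partial[j]|^{-1}$ again, and averaging the two expressions, each of which equals $2m\sum_{j=1}^{n-1}|\partial[j]|^{-1}$, while noting $|\partial[j]| = |\partial[n-j]|$ lets us restrict the sum to $j \le n/2$ at the cost of a factor $2$, giving $C_{st} \le 4m\sum_{j=1}^{n/2}|\partial[j]|^{-1}$.
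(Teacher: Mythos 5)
Your high-level plan matches the paper's: take the maximizer $g$ in the variational characterisation~\eqref{eq:var_hit}, sort the vertices so that $g(1) \le g(2) \le \cdots \le g(n)$, and apply Cauchy--Schwarz to the identity $1 = \sum_{j=1}^{n-1}(g(j{+}1)-g(j))$. You also correctly isolate the inequality that carries the argument, namely
\[
  \sum_{j=1}^{n-1}|\partial[j]|\,(g(j{+}1)-g(j))^2 \;\le\; \sum_{u\sim v}(g(u)-g(v))^2 \;=\; 4m\,\calE_P(g,g).
\]
However, the way you try to establish it --- a charging argument where each edge $\{u,v\}$ with labels $a<b$ is charged to the single index $j \in [a,b-1]$ minimising $|\partial[j]|$ --- does not close, and you correctly sense this (``the place I would spend the most care''). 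You then conclude, incorrectly, that the sorted labelling is insufficient and that the labelling must instead be built by a greedy/recursive ``peeling the smaller side'' construction. That is where the proposal goes wrong: the sorted labelling is already enough, and the inequality has a one-line proof. Expand $|\partial[j]|$ as a count over edges and swap the order of summation:
\[
  \sum_{j=1}^{n-1}|\partial[j]|\,(g(j{+}1)-g(j))^2
  = \sum_{\{u,v\}\in E}\;\sum_{j=a}^{b-1}(g(j{+}1)-g(j))^2,
\]
where $a<b$ are the labels of $u,v$. Because the labelling sorts $g$, every increment $g(j{+}1)-g(j)$ is non-negative, so for each fixed edge
\[
  \sum_{j=a}^{b-1}(g(j{+}1)-g(j))^2
  \le \Bigl(\sum_{j=a}^{b-1}(g(j{+}1)-g(j))\Bigr)^{\!2}
  = (g(b)-g(a))^2 = (g(u)-g(v))^2,
\]
using nothing more than $(\sum a_j)^2 \ge \sum a_j^2$ for non-negative $a_j$. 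Summing over edges gives exactly what is needed. This is what the paper does (written as $\calE_P(g,g) \ge \sum_i\sum_{i<k}\sum_{i\le j<k}\pi(i)P(i,k)(g(j{+}1)-g(j))^2$), and it makes no appeal to any adaptive or greedy ordering; the only property of the labelling that is used is monotonicity of $g$. Your first Cauchy--Schwarz attempt with weights $c_j = |\partial[j]|$ (which produced the problematic factor $\sum_{j:\,e\in\partial[j]}|\partial[j]|$) is a separate dead end and should simply be dropped. Finally, your statement ``$|\partial[j]| = |\partial[n-j]|$'' is not true for a single labelling --- the correct observation is that the set $[j]$ under the reversed labelling equals $V\setminus[n-j]$ under the original, so it has the same boundary; splitting the sum at $n/2$ and applying the bound to whichever of the two labellings realises the larger half gives the stated factor-$2$ loss.
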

\begin{proof}
Fix a function $g \colon V \to \R$ such that $0 \le g \le 1$ and $0 = g(1) \le g(2) \le \cdot \le g(n) = 1$ for $s = 0, t=n$.  Then we have
\begin{align*}
\calE_P(g,g) 	&=  \sum_i \sum_{i < k} \pi(i) P(i,k) (g(i) - g(k))^2  \\
			&\ge \sum_i \sum_{i < k} \sum_{i\le j < k} \pi(i) P(i,k) (g(j+1) - g(j))^2 \\
			&= \sum_{j} (g(j+1) - g(j))^2 Q([j], V \setminus [j]).
\end{align*}
But by applying Cauchy-Schwarz we obtain
\begin{align*}
1 = \sum_j  (g(j+1) - g(j)) &=  \sum_j  (g(j+1) - g(j)) \frac{Q([j], V \setminus [j])^{1/2}}{Q([j], V \setminus [j])^{1/2}} \\
			& \le  \sum_{j} (g(j+1) - g(j))^2 Q([j], V \setminus [j]) \sum_j \frac{1}{Q([j], V \setminus [j])} \\
			& \le \calE_P(g,g) \sum_j \frac{1}{Q([j], V \setminus [j])}
\end{align*}
and by \eq{var_hit}
\begin{align*}
C_{st} \le \sum_j \frac{1}{Q([j], V \setminus [j])} = 2m \sum_{j} \frac{1}{|\partial [j]|}.
\end{align*}
\end{proof}

\begin{lemma}[Lemma \ref{lem:edgeconn} (restated)]
Let $G=(V,E)$ be any graph with minimum degree $\delta$ so that any subset $S \subseteq V$ with $1 \leq |S| \leq n-1$ satisfies $|\partial S| \geq \rho$ (in other words, $G$ has edge-connectivity at least $\rho$). Then we have
\[
 \sum_{i=1}^{n-1} \frac{1}{|\partial[i]|} = O(n/\delta^2 \cdot \log \delta + n/(\rho \delta)).
\]
\end{lemma}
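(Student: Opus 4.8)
The plan is to produce a convenient ordering of $V$ and then control, for every threshold $k>0$, the count $N(k):=\#\{1\le i\le n-1:\ |\partial[i]|<k\}$, since a layer-cake identity then gives
\[
\sum_{i=1}^{n-1}\frac{1}{|\partial[i]|}=\int_{0}^{\infty}\frac{N(k)}{k^{2}}\,dk .
\]
The hypothesis $|\partial S|\ge\rho$ for all $\emptyset\neq S\subsetneq V$ gives $N(k)=0$ for $k\le\rho$, and trivially $N(k)\le n$. So the lemma reduces to proving, for the ordering we will choose, the estimate $N(k)=O\!\bigl(nk/\delta^{2}+n/\delta\bigr)$ for all $k\le c\delta^{2}$ with a suitable absolute constant $c$; substituting this and cutting the integral at $k=c\delta^{2}$ yields $\int_{\rho}^{\infty}\min\{n,\,O(nk/\delta^{2}+n/\delta)\}\,k^{-2}\,dk=O\!\bigl(n/(\rho\delta)\bigr)+O\!\bigl(n\log\delta/\delta^{2}\bigr)$, which is exactly the claimed bound. (We may assume $\delta$ is larger than any fixed constant, since otherwise $\rho\ge 1$ already gives $\sum_{i}1/|\partial[i]|\le (n-1)/\rho=O(n/(\rho\delta))$.)

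For the ordering I would take a breadth-first-search order from $s$: let $L_{0}=\{s\},L_{1},\dots,L_{D}$ be the BFS layers, list the vertices layer by layer, and inside each layer use an auxiliary rule that keeps every prefix $[i]$ connected (possible as in the remark preceding \lemref{connected}). By \lemref{ballsize} any ball $B_{r}\neq V$ has at least $\delta r/3$ vertices, hence $D=O(n/\delta)$, and each prefix $[i]$ is a full ball $B_{j-1}$ together with a subset $A\subseteq L_{j}$. The heart of the argument is that prefixes of small boundary form few, short ``runs''. First, appending a vertex $v$ to the current prefix $[i]$ changes the boundary by $\deg(v)-2e(v,[i])\ge\delta-2e(v,[i])$, so if two consecutive prefixes satisfy $b_i,b_{i+1}<k$ then $e(v_{i+1},[i])>(\delta-k)/2$. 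Summing this inequality over a maximal block $i_0,i_0+1,\dots,i_0+\ell$ of indices with $b<k$, writing $A$ for the $\ell$ vertices appended during the block and using $e(A,[i_0])\le b_{i_0}<k$ and $e(A,A)\le\binom{\ell}{2}$, one gets $\ell^{2}-\ell(\delta-k)+2k>0$. For $k\lesssim\delta$ this forces $\ell=O(k/\delta)$ (a ``short'' block) or $\ell=\Omega(\delta)$ (a ``long'' block); in the long case the same sum shows $e(A,A)=\Omega(\delta^{2})$, so $A$ is internally dense, and no ordering of a dense graph has all prefix cuts below $k$, ruling long blocks out once $k$ is below a constant times $\delta^{2}$ (the remaining range $\delta\lesssim k\lesssim\delta^{2}$ needs a similar but more technical count). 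Second, the number of maximal low blocks is $O(n/\delta)$: the left endpoint $i$ of such a block has $e(v_i,[i-1])>\delta/2$, and since the BFS order places all of $L_{j-1}$ before any vertex of $L_{j}$ these events can be charged to the quantities $e(L_{j-1},L_{j})$, whose relevant sum is $O(n)$ again by \lemref{ballsize}. Multiplying the two bounds gives $N(k)=O(n/\delta)\cdot O(k/\delta+1)=O(nk/\delta^{2}+n/\delta)$.

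In this plan the layer-cake reduction and the closing integral are routine; the substantial step, and the one I expect to be the main obstacle, is the bound $N(k)=O(nk/\delta^{2}+n/\delta)$ for the BFS order. The two delicate points there are: (i) excluding or absorbing the ``long'' low blocks across the whole range $k\le c\delta^{2}$ — the naive estimate $|\partial[i]|\ge n_j-|A|$ (``every un-added vertex of $L_j$ has a back-edge'') is a factor $\delta$ too weak, so one must instead use internal connectivity of the layers or the min-degree fact that the boundary climbs by roughly $\delta$ per appended vertex; and (ii) pushing the number of low blocks down to $O(n/\delta)$ rather than the easy but insufficient $O(\overline{d}\,n/\delta)$, which is precisely where the ball-growth bound \lemref{ballsize} is used crucially.
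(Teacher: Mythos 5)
Your layer-cake reduction, and the target estimate $N(k) := \#\{i : |\partial[i]| < k\} = O(nk/\delta^2 + n/\delta)$, are both correct; in fact the paper's own argument proves exactly this bound for an \emph{arbitrary} labelling. The paper amortises over windows $[j, j+\epsilon\delta]$ of indices, classifies the $\epsilon\delta$ incoming vertices into those with many neighbours beyond $[j]$ (Class~$1$) and those with many inside $[j]$ (Class~$2$), and derives a V-shaped lower bound $|\partial[j+i]| \gtrsim |i-\gamma|\,\delta$ across the window (with $\gamma$ the number of Class~$2$ vertices). This immediately gives at most $O(1 + k/\delta)$ low-boundary positions per window and $O(n/\delta)$ windows, hence the $N(k)$ estimate; your integral then recovers the stated bound.

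The problem is your proof of the $N(k)$ estimate, where the acknowledged gaps are not minor. (i) The block inequality $\ell^2 - \ell(\delta-k)+2k>0$ constrains $\ell$ only when $k\lesssim\delta$; for $\delta\lesssim k\lesssim\delta^2$ the coefficient $\delta-k$ is non-positive, the inequality is vacuous, and you explicitly defer this range to a ``more technical count'' --- but this is \emph{precisely} the range that produces the $n\log\delta/\delta^2$ term in the lemma, so the proof is missing its main piece. (ii) Excluding long blocks via ``$A$ is internally dense, so no ordering of a dense graph has all prefix cuts below $k$'' is a nontrivial isoperimetric claim that you assert rather than prove, and it would have to hold uniformly for all $k \le c\delta^2$. (iii) The BFS-layer charging does not give $O(n/\delta)$ low blocks: $\sum_j e(L_{j-1},L_j)$ is $\Theta(m)$ in general, not $O(n)$, and Lemma~\ref{lem:ballsize} bounds ball volumes, not inter-layer edge counts, so you are stuck at the easy $O(\overline{d}\,n/\delta)$ bound you correctly identify as insufficient. (iv) Most fundamentally, you fix a BFS ordering, but Lemma~\ref{lem:edgeconn} must hold for the labelling supplied by Lemma~\ref{lem:general} (the level sets of the commute-time maximiser $g$), since Theorem~\ref{thm:connectivity} is exactly that combination; the paper's windowed Class~$1$/Class~$2$ analysis is deliberately labelling-agnostic, and a BFS-specific argument does not transfer to that setting.
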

%\thomas{Todo: Figure out the exact dependency on $c$. This should be doable, but is maybe not entirely trivial which is why I haven't done it yet :-)}
%We point out that any graph for which the vertex-connectivity is at least $c \cdot d$ trivially satisfies the assertion of the lemma. 

%Also the graph used in Lemma~\ref{lem:lowerbound} should demonstrate that the $\log d$ factor cannot be avoided.
Before proving this lemma, as a warmup we recover a well-known hitting time bound for regular graphs using \lemref{general}:
% It is a well-known fact that for any regular graph $G$, $C_{st} = O(n^2)$ \cite{aldousFill}. In fact, somewhat more general, since $\diam(G) \leq 3n/ \delta$ and $|E|=nd/2$ where $d$ is the average degree, we have
\[
C_{st} \leq 2 |E| \dist(s,v) = O\left(n^2 \cdot \frac{d}{\delta}\right).
\]
%In the following, we demonstrate how Lemma~\ref{lem:general} can be used to recover this well-known result, since it serves as a good illustration of the more technical amortisation argument which will be used to derive our main result. 
Specifically, we will prove that
\begin{align}
 \sum_{j=1}^{n-1}\frac{1}{|\partial[j]|} = O(n/\delta). \label{eq:sum}
\end{align}
Note that whenever there is a $j$ for which
\[
|\partial[j]| \in (1,\delta/4],
\]
then we must have
\[
|\partial[j+1]| \geq \delta/2,
\]
since the vertex $j+1$ has degree $d$. For the same reason, at least $d/2$ edges from $j+1$ go to different endpoints in $\{j+2,\ldots,n\}$. Therefore, for any $1 \leq i \leq \delta/4$ we have
\[
 |\partial[j+i]| \geq \delta/4.
\]
Hence a simple amortisation argument shows that any sequence of $\delta/4$ consecutive terms in the sum of \eq{sum} contribute at most $O(1)$, and overall
\[
 \sum_{j=1}^{n-1}\frac{1}{|\partial [j] |} = O(n/\delta),
\]
as claimed.

\begin{proof}[Proof of Lemma \ref{lem:edgeconn}]
Let $1 \leq j \leq n-\epsilon \delta-1$ be arbitrary. We will now perform an amortised analysis on 
\begin{align*}
 \Delta_{j} &= \sum_{i=j}^{j+\epsilon \cdot \delta} \frac{1}{|\partial[i]|}, 
 %\\ &\leq \sum_{i=j}^{j+\epsilon \cdot d} \frac{1}{|E(\{1,\ldots,j\}, \partial\{1,\ldots,j\} \cap \partial\{1,\ldots,i\})| + |E(\{1,\ldots,i\}, \partial\{1,\ldots,i\} \setminus \partial\{1,\ldots,j\})|} =: \Gamma_j.
\end{align*}
showing that it is bounded from above by $O(\log \delta/\delta)$, where $\epsilon$ is a sufficiently small constant (in particular, $\epsilon \leq 1/8$).

We define a partition of the next $\epsilon d$ vertices $i \in \{j+1,\ldots,j+\epsilon \cdot \delta\}=:\mathcal{I}$ into two classes $1$ and $2$. Intuitively,
a vertex $i$ will be in class $1$ if most of its neighbours are vertices in $\{i+1,\ldots,n\}$, while a vertex $i$ will be in class $2$ if most of its neighbours are vertices in $\{1,\ldots,i\}$.

For technical reasons, however, it is advantageous to define the partition into classes based on the first $j$ vertices only. This way we ensure that the division into classes is invariant under any permutation of the vertices in $\mathcal{I}$, which turns out to be useful in the proof.

%It may be helpful to imagine the evolution of the set $\{1,\ldots,i\}$ as a process of repeavertices to a s

\textbf{Definition of Class 1.}
We say that a vertex $u \in \partial[j]$ is in Class $1$ if it has at least $(7/8) \delta$ neighbours in $V \setminus [j]$. Note that for any rank $i \in \mathcal{I}$ assigned to $u$ in the linear order, $u$ will contribute at least $(7/8) \delta - (i-j) \geq (7/8) \delta - \epsilon \delta = (3/4) \cdot \delta$ cut-edges towards $\partial[i]$, for any $\Delta_k$ with $k \in \mathcal{I}$.

Furthermore, any vertex $v \not\in \partial[j]$ will be also in Class $1$. Again, if $v$ is assigned any rank $i \in \mathcal{I}$, it follows that $v$ will contribute at least $\delta - (i-j) \geq \delta - \epsilon \delta \geq (3/4) \delta$ cut-edges towards $\partial[i]$, for any $\Delta_k$ with $i \leq k \leq j + \epsilon \delta$.

To summarise, we can conclude that whenever a vertex $u$ from Class $1$ is assigned any rank $i$ between $j+1$ and $j+\epsilon \cdot \delta$, then it will contribute at least $(3/4) \cdot \delta$ cut-edges towards $\Delta_k$, for any $i \leq k \leq j + \epsilon \cdot \delta$.

\textbf{Definition of Class 2.}
We say that a vertex $u \in \partial[j]$ is in Class $2$ if it is not in Class $1$. This implies that vertex $u$ must have at least $(1/8) \delta$ neighbours in $[j]$. Notice that if $i \in \mathcal{I}$ is the rank of vertex $u$, then for any $j \leq k \leq i-1$,
we have
\[
  |E(\{u\}, [i-1])| \geq |E(\{u\}, [j])| \geq (1/8) \delta,
\]
so $u$ contributes at least $(1/8) \delta$ cut-edges towards $\Delta_k$.

\textbf{Lower Bounding the Number of Cut-Edges.} Based on the partition of vertices in $\mathcal{I}$ into Class $1$ and Class $2$, we will now derive a lower bound on
$\partial[i]$ for any $i \in \mathcal{I}$. This lower bound will be universal in the sense that it will hold for any ordering of the remaining vertices $\{j+1,\ldots,n\}$. Intuitively, the worst-case scenario is if the first vertices in $\mathcal{I}$ are all in Class $2$, which may lead to a steady decrease in the edge-boundary. However, thanks to the definition of Class $1$, we eventually ``run out'' of vertices in Class $2$, which means that all remaining vertices will be in Class $1$, which will lead to a steady increase in the edge-boundary. Obviously, there may be orderings in which vertices in Class $1$ and Class $2$ are intermixed in an arbitrary manner, and the proof below has to cope with this case.

Let us proceed with the formal argument. We will denote by $\gamma$ the number of vertices in $\mathcal{I}$ which are in Class $2$. Note that $0 \leq \gamma \leq \partial(j)$. Obviously, among the vertices $\{j+1,\ldots,j+i\}$, at most $\min\{i,\gamma\}$ vertices can be in Class $2$, meaning that among the vertices $\{j+i+1,\ldots,j+\epsilon \delta$ we have at least $\gamma - \min\{i,\gamma\} = \max\{\gamma - i,0\}$ vertices in Class $2$. This implies for any $i$ with $j \leq i \leq j + \gamma - 1$,
\[
  |\partial[i]| \geq 
  |E(\{j+i+1,\ldots,j+\epsilon \delta \},[j]\})| \geq (\gamma - i) \cdot (1/8) \delta.
\]
For the boundary case $i=j+\gamma$, we use the lower bound based on the edge-connectivity to conclude that
\[
  |\partial[i]| \geq \rho.
\]
Furthermore, if $\gamma < \epsilon \cdot d$, then for any $j+\gamma < i \leq j + \epsilon \cdot \delta$, there must be at least $(i-\gamma)$ vertices among $\{j,j+1,\ldots,j+i\}$ which are in Class 1, which in turn implies
\[
 |\partial[i]| \geq (i-\gamma) \cdot (3/4) \cdot \delta \geq (i-\gamma) \cdot (1/8) \delta.
\]
Combining the three inequalities from above yields 
\[
 \Delta_j \leq 2 \sum_{k=1}^{\epsilon \delta} \frac{1}{k \cdot (1/8) \delta} + \frac{1}{\rho} \leq 2 \sum_{k=2}^{\epsilon \delta} \frac{1}{k \cdot (1/8) \delta} + \frac{1}{\rho} = O(\log \delta/\delta + 1/\rho).
\]
Since this holds for any $1 \leq j \leq n - \epsilon \delta - 1$, we conclude that by the second statement of Lemma~\ref{lem:edgeconn},
\begin{align*}
  2 \sum_{i=1}^{n/2} \frac{1}{|\partial[i]|} &\leq 2 \sum_{\lambda=0}^{n/(\epsilon \delta)-1} \Delta_{1+\lambda \cdot \epsilon \delta} \\ &\leq \left\lceil \frac{n}{\epsilon \delta} \right\rceil \cdot O(\log \delta/\delta + 1/\rho) \\ &= O(n/\delta^2 \cdot \log \delta + n/(\rho \delta)),
\end{align*}
as required. 
\end{proof}

\begin{lemma}
\label{lem:connected}
Let $g : V \to \R$ be a maximiser for \eq{var_hit}. Then, there exists a linear ordering of $V$ such that $0 = g(1) \le \cdots \le g(n) = 1$ and, for any $j=1,\dots,n$, $\{1,\dots,j\}$ forms a connected vertex-induced subgraph of $G$.
\end{lemma}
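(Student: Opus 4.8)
The plan is to exploit the fact that the maximiser $g$ in the variational characterisation~\eqref{eq:var_hit} is essentially unique and \emph{harmonic}, to derive from harmonicity a discrete maximum principle showing that every super-level and sub-level set of $g$ is connected, and then to build the required labelling greedily, one value-level at a time.

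First I would pin down the structure of $g$. Since $\mathcal{E}_P(g,g)=\frac{1}{4m}\sum_{\{u,v\}\in E}(g(u)-g(v))^2$ for the lazy walk on $G$, maximising $1/\mathcal{E}_P(g,g)$ over $\{0\le g\le 1,\ g(s)=0,\ g(t)=1\}$ is the same as minimising this quadratic over the affine set $\{g(s)=0,g(t)=1\}$. On the tangent space $\{h:h(s)=h(t)=0\}$ the graph-Laplacian form is positive definite (its only null direction, the constants, is excluded because $\mathbf 1(s)\neq 0$ and $G$ is connected), so the minimiser is unique; its first-order optimality condition at an interior vertex $v\notin\{s,t\}$ is exactly harmonicity, $g(v)=\frac{1}{d_v}\sum_{w\sim v}g(w)$. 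The box constraint is then automatic, and the strong maximum principle gives $0<g(v)<1$ for all $v\neq s,t$, so $s$ (resp.\ $t$) is the unique vertex with $g$-value $0$ (resp.\ $1$).

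The key step — the one I expect to need the most care — is the claim that for every $a\ge 0$ the super-level set $L_a:=\{v:g(v)>a\}$ induces a connected subgraph. Suppose not and let $C$ be a connected component of $L_a$ with $t\notin C$; since $a\ge 0$ and $g(s)=0\le a$ we also have $s\notin L_a\supseteq C$, so every vertex of $C$ is interior and harmonic. Put $M=\max_{v\in C}g(v)>a\ge 0$ and $C'=\{v\in C:g(v)=M\}$. For $v\in C'$, every neighbour $w$ satisfies $g(w)\le M$ (neighbours inside $C$ since $M$ is the max on $C$; neighbours outside $C$ lie outside $L_a$, because $C$ is an entire component of $L_a$, hence have $g(w)\le a<M$); harmonicity of $v$ then forces $g(w)=M$ for every neighbour $w$, and therefore $w\in C$ and $w\in C'$. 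Thus $C'$ is a nonempty union of connected components of $G$, so $C'=V$, contradicting $s\notin C'$. Applying the same claim to $1-g$ (which is the unique harmonic maximiser with endpoints swapped, using $C_{st}=C_{ts}$) shows symmetrically that every sub-level set $\{v:g(v)<c\}$ with $c\le 1$ is connected.

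Finally I would construct the labelling level by level. List the distinct values of $g$ as $0=c_1<c_2<\dots<c_r=1$; by the strong maximum principle the only vertex with value $c_1$ is $s$ and the only one with value $c_r$ is $t$. Set $v_1:=s$, and, having already placed exactly $\{v:g(v)\le c_k\}$ as an initial segment $v_1,\dots,v_p$ in which every prefix is connected, process level $c_{k+1}$: while the current placed set $S$ is a proper subset of $\{v:g(v)\le c_{k+1}\}$, observe that $\{v:g(v)<c_{k+2}\}$ (or $V$ itself when $k+1=r$) is connected and strictly contains $S$, so $S$ has a neighbour outside $S$ inside it; such a neighbour has $g$-value exactly $c_{k+1}$, and appending it keeps the prefix connected. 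Iterating exhausts the level; after processing $c_r$ all $n$ vertices are placed, ending with $v_n=t$. Relabelling $v_i$ as $i$ yields $0=g(1)\le\cdots\le g(n)=1$ with $\{1,\dots,j\}$ connected for every $j$, which is the claim; feeding this labelling into the proof of Lemma~\ref{lem:general} gives the stated refinement.
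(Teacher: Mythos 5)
Your proof is correct, but it takes a genuinely different route from the paper's. The paper argues by direct variational perturbation: it picks an ordering compatible with the $g$-values whose longest connected prefix is as long as possible, lets $j-1<n$ be that length, locates the least $j'>j$ adjacent to $\{1,\dots,j-1\}$, and (after ruling out $g(j')=g(j)$ by a swap) constructs the bulk-shifted competitor $h$ with $h\equiv g(j')$ on $\{j,\dots,j'\}$ and $h=g$ elsewhere, showing $\calE_P(h,h)<\calE_P(g,g)$ and contradicting optimality of $g$. That proof never invokes uniqueness of the maximiser or any harmonicity/maximum-principle machinery; it only uses monotonicity of $g$ along the ordering and the quadratic form for $\calE_P$. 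You instead identify the maximiser as the unique harmonic potential, deduce from the discrete maximum principle that every super- and sub-level set of $g$ induces a connected subgraph, and then build the labelling greedily level by level. Both are valid; the paper's is shorter and self-contained, yours is longer but exposes the level-set structure of the equilibrium potential (in particular it also yields the auxiliary fact that $V\setminus\{s\}$ and $V\setminus\{t\}$ are connected).

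One cosmetic inaccuracy worth flagging: the strong maximum principle does \emph{not} give $0<g(v)<1$ for every $v\notin\{s,t\}$, because harmonicity is only imposed away from $\{s,t\}$ and boundary values can propagate into the interior (on the path $1\!-\!2\!-\!3$ with $s=1,\ t=2$ the potential is $g=(0,1,1)$, so a non-terminal vertex attains the value $1$). Fortunately your construction never actually uses that $\{g=0\}=\{s\}$ or $\{g=1\}=\{t\}$: after seeding $v_1=s$, the greedy pass over level $c_1=0$ absorbs any other zero-valued vertices, and the last vertex placed necessarily has $g$-value $c_r=1$ (though it need not be $t$). So the resulting labelling still satisfies $0=g(1)\le\cdots\le g(n)=1$ with all prefixes connected, which is all the lemma requires.
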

\begin{proof}
Assume w.l.o.g. that $s=1$ and $t=n$ and suppose by contradiction no such ordering exists. Then there exists $1 < j < n-1$ such that $\{1,\dots,j-1\}$ is connected, but $\{1,\dots,j\}$ is not, i.e. $j$ is adjacent only to vertices in $\{j+1,\dots,n\}$. Let $j'$ be the minimum index such that $j' > j$ and there exists an edge connecting $j'$ to $\{1,\dots,j-1\}$. Moreover, we can assume that $g(j') > g(j)$, otherwise we can reorder the vertices so that $\{1,\dots,j\}$ is connected. We construct a new function $h : V \to \R$ in this way: $h(i) = g(i)$ for any $i=1,\dots,j-1$, $h(i) = g(j')$ for any $i=j,\dots,j'$, and $h(i) = g(i)$ for any $i > j'$. Since there are no edges between $j$ and $\{1,\dots,j'-1\}$, but the graph is connected, there must be an edge $\{j,k\}$ with $k \in \{j',\dots,n\}$. But then we have that $(h(k)-h(j))^2 < (g(k)-g(j))^2$ and, in general, $h^{\intercal} \calL h < g^{\intercal} \calL g$. This implies that $g$ is not a maximiser for \eq{var_hit}, reaching a contradiction. 
\end{proof}

\begin{lemma}
\label{lem:optimalconn}
For any pair of $\rho$ and $d$ there is a graph matching the upper bound in Theorem~\ref{thm:connectivity} up to a factor of $O(\log d)$.
\end{lemma}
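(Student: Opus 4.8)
The plan is to exhibit, for every pair $(\rho,d)$ with $1\le\rho\le d$ and every sufficiently large $n$ (a multiple of a suitable $\Theta(d)$), a $d$-regular $n$-vertex graph $G$ with edge-connectivity $\Theta(\rho)$ whose maximum commute time is $\Omega(n^2/\rho)$. Applying \thmref{connectivity} to this very graph (so $\delta=\overline d=d$, and its edge-connectivity is $\Theta(\rho)$) bounds every commute time of $G$ by $O\!\big(n^2(\tfrac{\log d}{d}+\tfrac1\rho)\big)=O\!\big(n^2\tfrac{\log d}{\rho}\big)$, where we used $\rho\le d$ and $\log d\ge 1$. Since we also show a commute time of $\Omega(n^2/\rho)$, the upper bound of \thmref{connectivity} is attained up to a factor $O(\log d)$, which is \lemref{optimalconn}. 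Observe that it is enough to make the edge-connectivity of $G$ equal to $\Theta(\rho)$ rather than exactly $\rho$: the ``matching'' statement is then read with respect to $G$'s own (constant-factor-$\rho$) edge-connectivity.

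The construction is a \emph{chain of cliques}. Take $L=\Theta(n/d)$ ``blobs'' $B_1,\dots,B_L$, each a $d$-regular graph on $\Theta(d)$ vertices that is $\Omega(\min\{d,\rho\})$-edge-connected; for consecutive blobs $B_j,B_{j+1}$ install a bundle $E_j$ of exactly $b:=\Theta(\rho)$ edges between them, and, to preserve $d$-regularity, delete one internal edge at each endpoint of a bundle edge, so that every bundle endpoint trades an internal edge for a bundle edge and keeps degree $d$. For $\rho\le d/2$ one may literally take $B_j=K_{d+1}$ and delete a matching of size $\rho$ among the $2\rho\le d+1$ bundle endpoints of a middle blob; for $\rho$ closer to $d$ one takes a slightly larger clique (on $\Theta(d)$ vertices) from which a well-chosen spanning $\Theta(d)$-regular subgraph is removed. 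First I would check that $G$ is $d$-regular with $n=\sum_j|B_j|$ vertices and that its edge-connectivity is $\Theta(\rho)$: the upper bound is immediate since $E_1$ is a cut of size $b=\Theta(\rho)$, and for the lower bound one notes that any edge cut either separates whole blobs, which costs at least $b$, or cuts through some $B_j$, which costs at least the edge-connectivity of the lightly modified $B_j$, arranged to be $\Omega(\rho)$.

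Next I would lower-bound the commute time. Pick $s\in B_1$ and $t\in B_L$. The bundles $E_1,\dots,E_{L-1}$ are pairwise edge-disjoint, and each $E_j$ is a cut-set separating $s$ from $t$. Hence Nash--Williams' inequality \cite[Proposition 9.15]{levinPeres} gives
\[
C_{st}\ \ge\ 2m\sum_{j=1}^{L-1}\frac{1}{|E_j|}\ =\ 2m\cdot\frac{L-1}{b}\ =\ \Omega\!\Big(\frac{nd\cdot(n/d)}{\rho}\Big)\ =\ \Omega\!\Big(\frac{n^2}{\rho}\Big),
\]
using $m=nd/2$, $L=\Theta(n/d)$, and $b=\Theta(\rho)$. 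Thus the maximum commute time of $G$ is $\Omega(n^2/\rho)$ (equivalently $\thit(G)=\Omega(n^2/\rho)$), and together with the upper bound $O(n^2\log d/\rho)$ from \thmref{connectivity} this proves the claim.

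The main obstacle is the regularity/connectivity bookkeeping in the regime $\rho=\Theta(d)$: there a middle blob must host $\Theta(d)$ bundle endpoints on each side and still be $\Omega(d)$-edge-connected after roughly $\Theta(d)$ of its internal edges have been deleted. Using blobs on $\Theta(d)$ vertices (rather than exactly $d+1$) provides the room, and one must delete the internal edges so as not to open a sparse cut --- for instance, delete them from a Hamilton decomposition of the underlying clique while leaving $\Omega(d)$ of its Hamilton cycles intact, which keeps the truncated blob $\Omega(d)$-edge-connected. Everything else --- parity of the prescribed degree sequences, the precise value of $L$, and the divisibility conditions on $n$ --- is routine.
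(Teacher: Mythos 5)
Your construction is correct in outline and the Nash--Williams lower bound is exactly right, but your route differs substantially from the paper's. The paper's proof is a one-line citation: it takes the circulant from Aldous and Fill's Example 6.24 (vertices $\{0,\dots,n-1\}$, edges $(i,i+u\bmod n)$ for $1\le u\le\rho$), for which the maximum commute time is known to be $\Theta(n^2/\rho)$, and plugs this into \thmref{connectivity}. That witness has degree $\Theta(\rho)$ and edge-connectivity $\Theta(\rho)$, so the paper's proof only exhibits tightness in the diagonal regime $d=\Theta(\rho)$, despite the lemma being phrased over ``any pair of $\rho$ and $d$''. Your chain-of-blobs construction genuinely covers the full range $1\le\rho\le d$: for $\rho\ll d$ the blobs are essentially $K_{d+1}$ with a small matching removed, and for $\rho=\Theta(d)$ the larger blobs with a Hamilton-decomposition deletion keep the truncated blobs $\Omega(d)$-edge-connected. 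So your argument is both different in technique (custom construction plus an explicit Nash--Williams calculation rather than a citation of the Aldous--Fill circulant) and a modest strengthening in scope. The main thing you should be explicit about in writing it up is the degree bookkeeping when $\rho=\Theta(d)$: deleting one internal edge per bundle endpoint must be done so as to remove a \emph{matching} on the bundle endpoints (an edge deletion reduces two degrees, not one), and you must check that the matching to be deleted actually exists inside the blob. With that made precise, the argument closes.
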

\begin{proof}
We show that, for any degree $d$ and edge-connectivity $\rho$, there exists a graph that matches the bound of \lemref{edgeconn} up to a $O(\log{d})$-factor. In fact, we take the same graph as in \cite[Example~6.24]{nonrevFill} by Aldous and Fill. In this example, we have $n$ vertices $\{0,1,\ldots,n-1\}$ and the edge set is $(i,i+u \mod n)$ for any $0 \leq i \leq n-1$ and $1 \leq u \leq \rho$. As they argue, the maximum commute time  is $\Theta(n^2 / \rho)$, while their bound only gives $\Theta(n^2 / \rho^{1/2})$. Note that our bound gives $\Theta( n^2 \cdot \log \rho/ \rho)$, which is tight up to a logarithmic factor.
\end{proof}

%%%%%%%%%%%%%%%%%%%%%%%%%%%%%%%%%%%%%%%%%%%%%%%%%%%%%%%%%%%%%%%%%%%%%%%%%%%%%%
%%%%%%%%%%%%%%%%%%%%%%%%%%%%%%%%%%%%%%%%%%%%%%%%%%%%%%%%%%%%%%%%%%%%%%%%%%%%%%

\end{document}